\definecolor{Gred}{RGB}{219, 50, 54}
\definecolor{Ggreen}{RGB}{60, 186, 84}
\definecolor{Gblue}{RGB}{72, 133, 237}
\definecolor{Gyellow}{RGB}{247, 178, 16}
\definecolor{ToCgreen}{RGB}{0, 128, 0}
\definecolor{myGold}{RGB}{231,141,20}
\definecolor{myBlue}{rgb}{0.19,0.41,.65}
\definecolor{myPurple}{RGB}{175,0,124}
\definecolor{niceRed}{RGB}{153,0,0}
\def\compactify{\itemsep=0pt \topsep=0pt \partopsep=0pt \parsep=0pt}
\let\latexusecounter=\usecounter
\newenvironment{Enumerate}
  {\def\usecounter{\compactify\latexusecounter}
   \begin{enumerate}}
  {\end{enumerate}\let\usecounter=\latexusecounter}
\def\eps{\varepsilon}
\DeclareMathOperator*{\argmin}{argmin}
\def\abs#1{\left|#1\right|}
\newcommand{\paragr}[1]{\noindent \textbf{#1}}
\def\matr#1{\boldsymbol{#1}}
\renewcommand{\vec}[1]{\boldsymbol{#1}}
\def\norm#1{\left\|#1\right\|}
\def\poly{\mathrm{poly}}
\def\Prob{\mathbb{P}}
\def\Exp{\mathbb{E}}
\def\Var{\mathrm{Var}}
\def\Cov{\mathrm{Cov}}
\def\normal{\mathcal{N}}
\newcommand{\reals}{\mathbb{R}}
\newtheorem{theorem}{Theorem}
\newtheorem{lemma}{Lemma}
\newtheorem{corollary}{Corollary}
\newtheorem{assumption}{Assumption}
\newtheorem{definition}{Definition}
\newenvironment{prevproof}[2]{\noindent {\bf {Proof of {#1}~\ref{#2}:}}}{$\blacksquare$\vskip \belowdisplayskip}
\definecolor{myC}{rgb}{0, 255, 255}
\definecolor{myY}{rgb}{204, 204, 0}
\definecolor{myM}{rgb}{255, 0, 255}
\definecolor{secinhead}{RGB}{249,196,95}
\definecolor{lgray}{gray}{0.8}
\newif\ifnotes\notestrue
\definecolor{mygrey}{gray}{0.50}
\newcommand{\notename}[2]{{\textcolor{red}{\footnotesize{\bf (#1:} {#2}{\bf
) }}}}
\newcommand{\notename}[2]{{}}
 \definecolor{niceRed}{RGB}{190,38,38}
\definecolor{blueGrotto}{HTML}{059DC0}
\definecolor{royalBlue}{HTML}{057DCD}
\definecolor{navyBlueP}{HTML}{0B579C}
\definecolor{limeGreen}{HTML}{81B622}
\newcommand{\memb}{M}
\newcommand{\chara}{\mathds{1}}
\newcommand{\Hessian}{\mathbf{H}}
\newcommand{\symm}{\mathcal{Q}}
\newcommand{\Domain}{\mathcal{D}}
\begin{document}

\title{Computationally and Statistically Efficient Truncated  Regression}
\author{
  \textbf{Constantinos Daskalakis} \\
  \small Massachusetts Institute of Technology \\
  \url{costis@csail.mit.edu}
  \and
  \textbf{Themis Gouleakis} \\
  \small Max Plank Institute for Informatics \\
  \url{themis.gouleakis@gmail.com}
  \and
  \textbf{Christos Tzamos} \\
  \small University of Wisconsin-Madison \\
  \url{tzamos@wisc.edu}
  \and
  \textbf{Manolis Zampetakis} \\
  \small Massachusetts Institute of Technology \\
  \url{mzampet@mit.edu}
}
\maketitle
\thispagestyle{empty}

\begin{abstract}
    We provide a computationally and statistically efficient estimator for the
  classical problem of truncated linear regression, where the dependent
  variable $y = \vec{w}^{T} \vec{x} + \eps$ and its corresponding
  vector of covariates $\vec{x} \in \mathbb{R}^k$ are only revealed if the
  dependent variable falls in some subset $S \subseteq \mathbb{R}$; otherwise
  the existence of the pair $(\vec{x}, y)$ is hidden. This problem has remained
  a challenge since the early works of
  \cite{tobin1958estimation,amemiya1973regression,hausman1977social}, its
  applications are abundant, and its history dates back even further to the
  work of Galton, Pearson, Lee, and Fisher
  \cite{Galton1897,PearsonLee1908,fisher31}. While consistent
  estimators of the regression coefficients have been identified, the error
  rates are not well-understood, especially in high dimensions.

    Under a ``thickness assumption'' about the covariance matrix of the
  covariates in the revealed sample, we provide a computationally efficient
  estimator for the coefficient vector $\vec{w}$ from $n$ revealed samples that
  attains $\ell_2$ error $\tilde{O}(\sqrt{k / n})$. Our estimator uses Projected
  Stochastic Gradient Descent (PSGD) without replacement on the negative
  log-likelihood of the truncated sample. For the statistically efficient
  estimation we only need an oracle access to the set $S$, which may otherwise
  be arbitrary. In order to achieve computational efficiency also we need to
  assume that $S$ is a union of a finite number of intervals but still can be
  very complicated. PSGD without replacement must be restricted to an
  appropriately defined convex cone to guarantee that the negative
  log-likelihood is strongly convex, which in turn is established using
  concentration of matrices on variables with sub-exponential tails. We perform
  experiments on simulated data to illustrate the accuracy of our estimator.

    As a corollary of our work, we show that SGD provably learns the parameters
  of single-layer neural networks with noisy activation functions
  \cite{NairH10, BengioLC13, GulcehreMDB16}, given linearly many, in the number
  of network parameters, input-output pairs in the realizable setting.
\end{abstract}
\clearpage

\section{Introduction} \label{sec:intro}

  A central challenge in statistics is estimation from truncated samples.
Truncation occurs whenever samples that do not belong in some set $S$ are not
observed. For example, a clinical study of obesity will not contain samples
with weight smaller than a threshold set by the study. The related notion of
censoring is similar except that part of the sample may be observed even if
it does not belong to $S$. For example, the values that an insurance adjuster
observes are right-censored as clients report their loss as equal to the
policy limit when their actual loss exceeds the policy limit. In this case,
samples below the policy limit are shown, and only the count of the samples
that are above the limit is provided. Truncation and censoring have myriad
manifestations in business, economics, manufacturing, engineering, quality
control, medical and biological sciences, management sciences, social
sciences, and all areas of the physical sciences. As such they have received
extensive study.

  In this paper, we revisit the classical problem of
\textit{truncated linear regression}, which has been a challenge since the
early works of
\cite{tobin1958estimation, amemiya1973regression, hausman1977social, Maddala1986}.
Like standard linear regression, the dependent variable $y \in \mathbb{R}$ is
assumed to satisfy a linear relationship
$y = \vec{w}^{\rm T} \vec{x}+{\varepsilon}$ with the vector of covariates
$\vec{x} \in \mathbb{R}^k$, where $\varepsilon \sim {\cal N}(0,1)$, and
$\vec{w} \in \mathbb{R}^k$ is some unknown vector of regression coefficients.
Unlike standard linear regression, however, neither $\vec{x}$ nor $y$ are
observed, unless the latter belongs to some set $S \subseteq \mathbb{R}$.
Given a collection $(\vec{x}^{(i)},y^{(i)})_{i = 1, \ldots, n}$ of samples
that survived truncation, the goal is to estimate $\vec{w}$. In the closely
related and easier setting of \textit{censored linear regression}, we are
also given the set of covariates resulting in a truncated response.

  Applications of truncated and censored linear regression are abundant, as in
many cases observations are systematically filtered out during data
collection, if the response variable lies below or above certain thresholds.
An interesting example of truncated regression is discussed in
\cite{hausman1977social} where the effect of education and intelligence on
the earnings of workers in ``low level'' jobs is studied, based on a data
collected by surveying families whose incomes, during the year preceding the
experiment, were smaller than one and one-half times the 1967 poverty line.

  Truncated and censored linear regression have a long history, dating back to
at least \cite{tobin1958estimation}, and the work of
\cite{amemiya1973regression} and \cite{hausman1977social}.
\cite{amemiya1973regression} studies censored regression, when the
truncation set $S$ is a half-line and shows consistency and asymptotic
normality of the maximum likelihood estimator. He also proposes a two-step
Newton method to compute a consistent and asymptotically normal estimator.
Hausman and Wise study the harder problem of truncated regression also
establishing consistency of the maximum likelihood estimator. An overview of
existing work on the topic can be found in \cite{breen1996regression}. While
known estimators achieve asymptotic rate of $O_k({1/\sqrt{n}})$, at least for
sets $S$ that are half-lines, the dependence of $O_k(\cdot)$ on the dimension
$k$ is not well-understood. Moreover, while these weaker guarantees can be
attained for censored regression, no efficient algorithm is known at all for
truncated regression.

\smallskip Our goal in this work is to obtain computationally and
statistically efficient estimators for truncated linear regression. We make
no assumptions about the set $S$ that is used for truncation, except that we
are given oracle access to this set, namely, given a point $\vec{x}$ the
oracle outputs $\chara\{\vec{x} \in S\}$. We also make a couple of necessary
assumptions about the covariates of the samples.
\begin{description}
  \item[Assumption I:] the first is that the probability, conditionally on
        $\vec{x}^{(i)}$, that the response variable
        $y^{(i)} = \vec{w}^{\top} \vec{x}^{(i)} +\varepsilon^{(i)}$
        corresponding to a covariate $\vec{x}^{(i)}$ in our sample is not
        truncated is lower bounded by some absolute constant, say $1\%$, with
        respect to the choice of $\varepsilon^{(i)} \sim {\cal N}(0,1)$; this
        assumption is also necessary as was shown in \cite{DaskalakisGTZ18}
        for the special case of our problem, pertaining to truncated Gaussian
        estimation,
  \item[Assumption II:] the second is the same thickness assumption, also
        made in the some standard (untruncated) linear regression, that the
        average $\frac{1}{n} \sum_i \vec{x}^{(i)} \vec{x}^{(i) T}$ of the
        outer-products of the covariates in our sample has some absolute
        lower bound on its minimum singular value.
        \label{assumption costas 1}
\end{description}

  These assumptions are further discussed in Section \ref{sec:truncated}, and
in particular in Assumptions \ref{asp:survivalProbability} and
\ref{asp:logkAssumption}. As these are more cumbersome, the reader can just
think of Assumptions I and II stated above.

  Under Assumptions I and II (or Assumptions \ref{asp:survivalProbability}
and \ref{asp:logkAssumption}), and some boundness assumption on the parameter
vector, we provide the first time and sample efficient estimation algorithm for
truncated linear regression, whose estimation error of the coefficient vector
$\vec{w}$ decays as $\tilde{O}({\sqrt{k/n}})$. For a formal statement see
Theorem \ref{thm:estimationTheorem}. Our algorithm is the first computationally
efficient estimator for truncated linear regression. It is also the first, to
the best of our knowledge, estimator that can accommodate arbitrary truncation
sets $S$. This, in turn,  enables statistical estimation in settings where set
$S$ is determined by a complex set of rules, as it happens in many important
applications.

  We present below a high-level overview of the techniques involved in proving
our main result, and discuss further related work. Section \ref{sec:model}
provides the necessary preliminaries, while Section \ref{sec:truncated}
presents the truncated linear regression model and contains a discussion of
the assumptions made for our estimation. Section \ref{sec:estimation} states
our main result and provides its proof. In Section \ref{sec:neural} we
present an application of our estimation algorithm in learning the weights of
a single layer neural network with noisy activation function. Finally, in
Section \ref{sec:experiments}, we perform experiments on simulated data to
illustrate the accuracy of our method.
\medskip

\paragr{Learning Single-Layer Neural Networks with Noisy Activation
Functions.} Our main result implies as an immediate corollary the
learnability, via SGD, of single-layer neural networks with noisy Relu
activation functions \cite{NairH10, BengioLC13, GulcehreMDB16}. The noisy
Relu activations, considered in these papers for the purposes of improving
the stability of gradient descent, are similar to the standard Relu
activations, except that noise is added to their inputs before the
application of the non-linearity. In particular, if $z$ is the input to a
noisy Relu activation, its output is $\max\{0, z + \varepsilon\}$, where
$\varepsilon \sim \normal(0,1)$. In turn, a single-layer neural network with
noisy Relu activations is a random mapping,
$f_{\vec{w}}: \vec{x} \mapsto \max\{0, \vec{w}^T \vec{x} + \eps\}$, where
$\eps \sim \normal(0,1)$.

  We consider the learnability of single-layer neural networks of this type
in the realizable setting. In particular, given a neural network
$f_{\vec{w}}$ of the above form, and a sequence of inputs
$\vec{x}^{(1)}, \ldots, \vec{x}^{(n)}$, suppose that
$y^{(1)}, \ldots, y^{(n)}$ are the (random) outputs of the network on these
inputs. Given the collection $(\vec{x}^{(i)}, y^{(i)})_{i = 1}^n$ our goal is
to recover $\vec{w}$. This problem can be trivially reduced to the main
learning problem studied in this paper as a special case where: (i) the
truncation set is very simple, namely the half open interval $[0,+\infty)$;
and (ii) the identities of the inputs $\vec{x}^{(i)}$ resulting in truncation
are also revealed to us, namely we are in a censoring setting rather than a
truncation setting. As such, our more general results are directly applicable
to this setting. For more information see Section \ref{sec:neural}.

\subsection{Overview of the Techniques.} \label{sec:intro:techniques}

  We present a high-level overview of our time- and statistically-efficient
algorithm for truncated linear regression
(Theorem \ref{thm:estimationTheorem}). Our algorithm, shown in Figure
\ref{fig:algorithm}, is Projected Stochastic Gradient Descent (PSGD) without
replacement on the negative log-likelihood of the truncated samples. Notice
that we cannot write a closed-form expression for the negative log-likelihood,
as the set $S$ can be very complicated. Indeed, for the statistical efficiency
we only need to assume that we have oracle access to this set and can thus not
write down a formula for the measure of $S$ under different estimates of the
coefficient vector $\vec{w}$. While we cannot write a closed-form expression
for the negative log-likelihood, it is not hard to see that the negative
log-likelihood is still convex with respect to $\vec{w}$ for arbitrary
truncation sets $S \subseteq \mathbb{R}$.

  To effectively run the Stochastic Gradient Descent without replacement on the
negative log-likelihood, we need however to ensure that the algorithm remains
within a region where it is {\em strongly convex}. To accomplish this we define
a convex set of vectors $(\vec{w})$ in Definition \ref{def:regression:setaki}
and show in Theorem \ref{thm:conditionsSatisfied} that the negative
log-likelihood is strongly convex on that set; see in particular
\eqref{eq:thm:condition2} in the statement of the theorem, whose left hand side
is the Hessian of the negative log-likelihood. We also show that this set
contains the true coefficient vector in Lemma \ref{lem:thirdLemma}. Finally, we
show that we can efficiently project on this set; see Section
\ref{sec:projectionAlgorithmProof}.

  Thus we run our Projected Stochastic Gradient Descent without replacement
procedure on this set. As we have already noted, we have no closed-form
expression for the negative log-likelihood or its gradient. Nevertheless, we
show that, given oracle access to set $S$, we can get an un-biased sample of
the gradient. If $(\vec{x}_t, y_t)$ is a sample processed by PSGD without
replacement at step $t$, and $\vec{w}_t$ the current iterate, we perform
rejection sampling to obtain a sample from the Gaussian
$\normal(\vec{w}_t^T \vec{x}_t,1)$ conditioned on the truncation set $S$, in
order to compute an unbiased estimate of the gradient as per Eq.
\eqref{eq:gradientofLikelihoodFiniteSamples}. The rejection sampling that we use
could be computationally inefficient. For this reason to get a computationally
efficient algorithm we also assume that the set $S$ is a union of $r$
subintervals. In this case we can use a much faster sampling procedure and get
an efficient algorithm as a result, as we explain in Appendix
\ref{sec:unionIntervalsSampling}.

  Once we have established the strong convexity of the negative log-likelihood
as well as the efficient procedure to sample unbiased estimated of its gradient,
we are ready to analyze the performance of the PSGD without replacement. The
latter is a very challenging topic in the Machine Learning and only very
recently there have been works that analyze PSGD without replacement
\cite{Shamir16, PillaudRB18, NagarajJN19}. In this paper we use the framework
and the results of \cite{Shamir16} to analyze our algorithms.

\subsection{Further Related Work} \label{sec:further related}

  We have already surveyed work on truncated and censored linear regression
since the 1950s. Early precursors of this literature can be found in the
simpler, non-regression version of our problem, where the $\vec{x}^{(i)}$'s
are single-dimensional and equal, which corresponds to estimating a truncated
Normal distribution. This problem goes back to at least \cite{Galton1897},
\cite{Pearson1902}, \cite{PearsonLee1908}, and \cite{fisher31}. Following
these early works, there has been a large volume of research devoted to
estimating truncated Gaussians or other truncated distributions in one or
multiple dimensions; see e.g. \cite{Hotelling48,Tukey49}, and
\cite{Schneider86,Cohen91,BalakrishnanCramer} for an overview of this work.
There do exist consistent estimators for estimating the parameters of
truncated distributions, but, as in the case of truncated and censored
regression, the optimal estimation rates are mostly not well-understood. Only
very recent work of \cite{DaskalakisGTZ18} provides computationally and
statistically efficient estimators for the parameters of truncated
high-dimensional Gaussians. Similar to the present work,
\cite{DaskalakisGTZ18} we use PSGD to optimize the negative log-likelihood of
the truncated samples but with the main difference that in this paper we have
to use PSGD without replacement. Moreover, identifying the set where the
negative log-likelihood is strongly convex and establishing its strong
convexity are also simpler tasks in the truncated Gaussian setting compared to
the truncated regression setting, due to the shifting of the mean of the
samples induced by the different covariates $\vec{x}^{(i)}$.

  Last but not least our problem falls in the realm of robust Statistics, where
there has been a strand of recent works studying robust estimation and learning
in high dimensions. A celebrated result by \cite{CLMW11} computes the PCA of a
matrix, allowing for a constant fraction of its entries to be adversarially
corrupted, but they require the locations of the corruptions to be relatively
evenly distributed. Related work of \cite{XCM10} provides a robust PCA
algorithm for arbitrary corruption locations, requiring at most $50\%$ of the
points to be corrupted.

  \cite{DKK+16b,LRV16,DKK+17,DKK+18} do robust estimation of the parameters of
multi-variate Gaussian distributions in the presence of arbitrary corruptions to
a small $\varepsilon$ fraction of the samples, allowing for both deletions of
samples and additions of samples that can also be chosen adaptively (i.e.~after
seeing the sample generated by the Gaussian). The authors in \cite{CSV17} show
that corruptions of an arbitrarily large fraction of samples can be tolerated as
well, as long as we allow ``list decoding'' of the parameters of the Gaussian.
In particular, they design learning algorithms that work when an
$(1-\alpha)$-fraction of the samples can be adversarially corrupted, but output
a set of ${\rm poly}(1/\alpha)$ answers, one of which is guaranteed to be
accurate.

  Closer to our work in this strand of literature are works studying robust
linear regression \cite{BJK15,diakonikolas2018efficient} where a small fraction
of the response variables are arbitrarily corrupted. As we already discussed in
Section \ref{sec:further related}, these results allow arbitrary corruptions yet
a small number of them. We only allow filtering out observations, but an
arbitrarily large fraction of them.

  Other works in this literature include robust estimation under sparsity
assumptions \cite{Li17,BDLS17}. In \cite{HM13}, the authors study robust
subspace recovery having both upper and lower bounds that give a trade-off
between efficiency and robustness. Some general criteria for robust estimation
are formulated in \cite{SCV18}.

  For the most part, these works assume that an adversary perturbs a small
fraction of the samples \textit{arbitrarily}. Compared to truncation and
censoring, these perturbations are harder to handle. As such only small amounts
of perturbation can be accommodated, and the parameters cannot be estimated to
arbitrary precision. In contrast, in our setting the truncation set $S$ may
very well  have an ex ante probability of obliterating most of the
observations, say $99\%$ of them, yet the parameters of the model can still
be estimated to arbitrary precision.

\section{Preliminaries} \label{sec:model}

\paragr{Notation.} Let $\langle \vec{x}, \vec{y} \rangle$ be the inner product
of $\vec{x}, \vec{y} \in \reals^k$. We use $\matr{I}_k$ to refer to the identity
matrix in $k$ dimensions. We may drop the subscript when the dimensions are
clear. Let also $\symm_k$ be the set of all the symmetric $k \times k$
matrices. The covariance matrix between two vector random variables
$\vec{x}, \vec{y}$ is $\Cov[\vec{x}, \vec{y}]$.
\smallskip

\paragr{Vector and Matrix Norms.} We define the $\ell_p$-norm of
$\vec{x} \in \reals^{k}$ to be
$\norm{\vec{x}}_p = \left( \sum_i x_i^p \right)^{1/p}$ and the
$\ell_{\infty}$-norm of $\vec{x}$ to be
$\norm{\vec{x}}_{\infty} = \max_{i} \abs{x_i}$. We also define the
spectral norm of a matrix $\matr{A}$ to be
\[ \norm{\matr{A}}_2 = \max_{\vec{x} \in \reals^d, \vec{x}
\neq 0} \frac{\norm{\matr{A} \vec{x}}_2}{\norm{\vec{x}}_2}. \]
It is well known that for $\matr{A} \in \symm_k$,
$\norm{\matr{A}}_2 = \max_{i} \{\abs{\lambda_i}\}$, where $\lambda_i$'s are the
eigenvalues of $\matr{A}$. The Frobenius norm of a matrix
$\matr{A} = (a_{ij}) \in \symm_d$ is defined as $\norm{\matr{A}}_F=\sqrt{\sum_{i,j} a_{ij}^2}$.
\smallskip

\paragr{Truncated Gaussian Distribution.} Let
$\normal(\vec{\mu}, \matr{\Sigma})$ be the normal distribution with mean
$\vec{\mu}$ and covariance matrix $\matr{\Sigma}$, with the following
probability density function
\begin{align} \label{eq:normalDensityFunction}
  \normal(\vec{\mu}, \matr{\Sigma}; \vec{x}) =
  \frac{1}{\sqrt{\det(2 \pi \matr{\Sigma})}}
  \exp \left( - \frac{1}{2} (\vec{x} - \vec{\mu})^T \matr{\Sigma}^{-1}
  (\vec{x} - \vec{\mu}) \right).
\end{align}
Also, let $\normal(\vec{\mu},\matr{\Sigma}; S)$ denote the \emph{probability mass of a set $S$} under this Gaussian measure. Let $S \subseteq \reals^k$ be a
subset of the $k$-dimensional Euclidean space, we define the
\textit{$S$-truncated normal distribution}
$\normal(\vec{\mu}, \matr{\Sigma}, S)$ the normal distribution
$\normal(\vec{\mu}, \matr{\Sigma})$ conditioned on taking values in the subset
$S$. The probability density function of $\normal(\vec{\mu}, \matr{\Sigma}, S)$
is the following
\begin{equation} \label{eq:truncatedNormalDensityFunction}
  \normal(\vec{\mu}, \matr{\Sigma}, S; \vec{x}) =
    \begin{cases}
      \frac{1}{\normal(\vec{\mu},\matr{\Sigma}; S)}
      \cdot \normal(\vec{\mu}, \matr{\Sigma}; \vec{x}) & ~~~ \vec{x} \in S \\
      0                                                & ~~~ \vec{x} \not\in S
    \end{cases} .
\end{equation}

\paragr{Membership Oracle of a Set.} Let $S \subseteq \reals^k$ be a subset
of the $k$-dimensional Euclidean space. A \textit{membership oracle} of $S$
is an efficient procedure $\memb_S$ that computes the characteristic function
of $S$, i.e. $\memb_S(\vec{x}) = \chara\{\vec{x} \in S\}$.

\section{Truncated Linear Regression Model} \label{sec:truncated}

  Let $S \subseteq \reals$ be a measurable subset of the real line.
We assume that we have access to $n$ truncated samples of the form
$(\vec{x}^{(i)}, y^{(i)})$. Truncated samples are generated as follows:
\begin{Enumerate}
  \item one $\vec{x}^{(i)} \in \reals^k$ is picked arbitrarily,
  \item the value $y^{(i)}$ is computed according to
        \begin{equation} \label{eq:truncatedLinearRegressionDefinition}
          y^{(i)} = \vec{w}^{* T} \vec{x}^{(i)} + \eps^{(i)},
        \end{equation}
        where $\eps^{(i)}$ is sampled from a standard normal distribution
        $\normal(0, 1)$,
  \item if $y^{(i)} \in S$ then return $(\vec{x}^{(i)}, y^{(i)})$, otherwise
        repeat from step 1 with the same index $i$.
\end{Enumerate}
\bigskip

  Without any assumptions on the truncation set $S$, it is easy to see that no
meaningful estimation is possible. When $k = 1$ the regression problem becomes
the estimation of the mean of a Gaussian distribution that has been studied in
\cite{DaskalakisGTZ18}. In this case the necessary and sufficient condition is
that the Gaussian measure of the set $S$ is at least a constant $\alpha$. When
$k > 1$ though, for every $\vec{x} \in \reals^k$ we have a different
$\alpha(\vec{x})$ defined as we can see in the following definition.

\begin{definition}[\textsc{Survival Probability}]
\label{def:survivalProbability}
    Let $S$ be a measurable subset of $\reals$. Given $\vec{x} \in \reals^k$ and
  $\vec{w} \in \reals^{k}$ we define the \textit{survival probability}
  $\alpha(\vec{w}, \vec{x}; S)$ of the sample with feature vector $\vec{x}$ and
  parameters $\vec{w}$ as
  \[ \alpha(\vec{w}, \vec{x}; S) = \normal(\vec{w}^T \vec{x}, 1; S). \]
  When $S$ is clear from the context we may refer to
  $\alpha(\vec{w}, \vec{x}; S)$ simply as $\alpha(\vec{w}, \vec{x})$.
\end{definition}

\noindent Since $S$ has a different mass for every $\vec{x}$, the assumption
that we need in this regime is more complicated than the assumption used by
\cite{DaskalakisGTZ18}. A natural candidate assumption is that for every
$\vec{x}^{(i)}$ the mass $\alpha(\vec{w}, \vec{x}^{(i)})$ is large enough. We
propose an even weaker condition which is sufficient for recovering the
regression parameters and only lower bounds an average of
$\alpha(\vec{w}, \vec{x}^{(i)})$.

\begin{assumption}[\textsc{Constant Survival Probability Assumption}]
\label{asp:survivalProbability}
    Let $(\vec{x}^{(1)}, {y}^{(1)})$, $\dots$, $(\vec{x}^{(n)}, {y}^{(n)})$
  be samples from the regression model
  \eqref{eq:truncatedLinearRegressionDefinition}. There exists a constant
  $a > 0$ such that
  \[ \sum_{i = 1}^n \log \left(\frac{1}{\alpha(\vec{x}^{(i)}, \vec{w}^*)}\right)
     \vec{x}^{(i)} \vec{x}^{(i) T} \preceq \log\left(\frac{1}{a}\right)
     \sum_{i = 1}^n \vec{x}^{(i)} \vec{x}^{(i) T}. \]
\end{assumption}

\noindent Our second assumption involves only the $\vec{x}^{(i)}$'s that we
observe and is similar to the usual assumption in linear regression that
covariance matrix of $\vec{x}^{(i)}$'s has high enough variance in every
direction.

\begin{assumption}[\textsc{Thickness of Covariance Matrix of Covariates Assumption}]\footnote{We want
to highlight that the assumption $\matr{X} \succeq b^2 \cdot \matr{I}$ in
Assumption \ref{asp:logkAssumption} can be eliminated if we allow the error to
be measured in the Mahalanobis distance with scale matrix $\matr{X}$. We
prefer to keep this assumption throughout the paper due to the simplicity of
exposition of the formal statements and the proofs but it is not hard to see
that all our proofs generalize.} \label{asp:logkAssumption}
    Let $\matr{X}$ be the $k \times k$ matrix defines as
  $\matr{X} = \frac{1}{n} \sum_{i = 1}^n \vec{x}^{(i)} \vec{x}^{(i) T}$,
  where $\vec{x}^{(i)} \in \reals^k$. Then for every $i \in [n]$, it holds that
  \[ \matr{X} \succeq \frac{\log(k)}{n} \vec{x}^{(i)} \vec{x}^{(i) T}, \quad \text{and} \quad \matr{X} \succeq b^2 \cdot \matr{I} \]
  for some value $b \in \reals_+$.
\end{assumption}

\noindent The aforementioned thickness assumption can also be replaced by the
assumption (1) $\matr{X} \succeq \matr{I}$, and (2)
$\norm{\vec{x}^{(i)}}_2^2 \le \frac{n}{\log(k)}$, for all $i \in [n]$. This pair
of assumptions hold with high probability if the covariates are sampled from
some well-behaved distribution, e.g. a multi-dimensional Gaussian.

\section{Estimating the Parameters of a Truncated Linear Regression}
\label{sec:estimation}

  We start with the formal statement of our main theorem about the parameter
estimation of the truncated linear regression model
\eqref{eq:truncatedLinearRegressionDefinition}.

\begin{theorem} \label{thm:estimationTheorem}
    Let $(\vec{x}^{(1)}, y^{(1)}), \dots, (\vec{x}^{(n)}, y^{(n)})$ be $n$
  samples from the linear regression model
  \eqref{eq:truncatedLinearRegressionDefinition} with parameters $\vec{w}^*$,
  such that $\norm{\vec{x}^{(i)}}_{\infty} \le B$, and
  $\norm{\vec{w}^*}_{2} \le C$. If Assumptions
  \ref{asp:survivalProbability} and \ref{asp:logkAssumption} hold, then there
  exists an algorithm with success probability at least $2/3$, that outputs
  $\hat{\vec{w}} \in \reals^k$ such that
  \[ \norm{\hat{\vec{w}} - \vec{w}^*}_2 \le \poly(1/a)
  \cdot \frac{B \cdot C}{b^2} \cdot \sqrt{\frac{k}{n} \log(n)}. \]
  \noindent Moreover if the truncation set $S$ is a union of $r$ intervals then
  the algorithm runs in time $\poly(n, r, k, 1/a)$.
\end{theorem}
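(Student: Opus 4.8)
The plan is to set up the negative log-likelihood of the truncated sample, establish that it is strongly convex on an appropriately restricted convex set containing $\vec{w}^*$, and then run Projected Stochastic Gradient Descent without replacement, invoking the analysis of \cite{Shamir16} to get the rate. First I would write the per-sample negative log-likelihood: for sample $(\vec{x}^{(i)}, y^{(i)})$ it is $\ell_i(\vec{w}) = \frac{1}{2}(y^{(i)} - \vec{w}^T\vec{x}^{(i)})^2 + \log\normal(\vec{w}^T\vec{x}^{(i)},1;S)$, and the empirical objective is $\bar\ell(\vec{w}) = \frac{1}{n}\sum_i \ell_i(\vec{w})$. Differentiating, the gradient contribution of sample $i$ is $\left(\Exp_{z\sim\normal(\vec{w}^T\vec{x}^{(i)},1,S)}[z] - y^{(i)}\right)\vec{x}^{(i)}$, which is exactly the quantity whose unbiased estimate we can obtain by rejection sampling from the truncated Gaussian via the membership oracle $\memb_S$ (Eq.~\eqref{eq:gradientofLikelihoodFiniteSamples}); when $S$ is a union of $r$ intervals, the faster conditional-Gaussian sampler referenced in Appendix~\ref{sec:unionIntervalsSampling} makes each step run in $\poly(r,k)$ time. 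The Hessian of $\ell_i$ is $\left(1 + \Var_{z\sim\normal(\vec{w}^T\vec{x}^{(i)},1,S)}[z]\right)\vec{x}^{(i)}\vec{x}^{(i)T} \succeq \vec{x}^{(i)}\vec{x}^{(i)T}$, so $\bar\ell$ is always convex, but to get \emph{strong} convexity one needs the variance terms bounded below, which is where the survival-probability assumption and the restriction to the convex set of Definition~\ref{def:regression:setaki} enter.

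Next I would assemble the three ingredients provided earlier in the paper. By Lemma~\ref{lem:thirdLemma}, $\vec{w}^*$ lies in the convex set $\Domain$ of Definition~\ref{def:regression:setaki}. By Theorem~\ref{thm:conditionsSatisfied}, restricted to $\Domain$ the Hessian satisfies a bound of the form $\nabla^2\bar\ell(\vec{w}) \succeq \lambda \matr{X}$ (Eq.~\eqref{eq:thm:condition2}), and combined with $\matr{X} \succeq b^2\matr{I}$ from Assumption~\ref{asp:logkAssumption} this gives $\mu$-strong convexity with $\mu = \Omega(\poly(a)\,b^2)$; the same theorem also furnishes the smoothness / second-moment bound on the stochastic gradients, using $\norm{\vec{x}^{(i)}}_\infty \le B$ and $\norm{\vec{w}^*}_2 \le C$ to control the range of $\vec{w}^T\vec{x}^{(i)}$ and hence the variance of the truncated Gaussian (sub-exponential tails), which bounds $\Exp\norm{\nabla\ell_{i_t}(\vec{w}_t)}_2^2 \le G^2$ with $G = \poly(1/a)\cdot B\cdot\poly(C,B,\sqrt{k})$. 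Finally, Section~\ref{sec:projectionAlgorithmProof} gives an efficient Euclidean projection onto $\Domain$, so PSGD is well-defined and each iteration is polynomial-time.

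With these in hand, I would apply the without-replacement SGD convergence theorem of \cite{Shamir16}: running PSGD without replacement for one pass over the $n$ samples on a $\mu$-strongly convex objective with stochastic gradients of second moment $\le G^2$, the final (or suitably averaged) iterate $\hat{\vec{w}}$ satisfies $\Exp[\bar\ell(\hat{\vec{w}}) - \bar\ell(\vec{w}^*)] = \tilde O\!\left(\frac{G^2}{\mu n}\right)$. Strong convexity converts the optimality gap into a parameter bound, $\norm{\hat{\vec{w}}-\vec{w}^*}_2^2 \le \frac{2}{\mu}\big(\bar\ell(\hat{\vec{w}})-\bar\ell(\vec{w}^*)\big) = \tilde O\!\left(\frac{G^2}{\mu^2 n}\right)$, which upon substituting $\mu = \Omega(\poly(a) b^2)$ and $G = \poly(1/a)\,B\,\poly(\cdot)\sqrt{k}$ yields $\norm{\hat{\vec{w}}-\vec{w}^*}_2 = \poly(1/a)\cdot\frac{BC}{b^2}\cdot\sqrt{\frac{k}{n}\log n}$. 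The $2/3$ success probability comes from Markov's inequality on the expectation bound (with constants absorbed), and the $\poly(n,r,k,1/a)$ runtime follows since there are $n$ SGD steps, each requiring one efficient truncated-Gaussian sample and one efficient projection.

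The main obstacle I anticipate is not the SGD bookkeeping but ensuring that the iterates stay in a region where the stochastic gradients have uniformly bounded second moment \emph{and} the objective stays strongly convex — i.e., that the projection set $\Domain$ is simultaneously (a) convex and efficiently projectable, (b) large enough to contain $\vec{w}^*$, and (c) small enough that on it the truncated-Gaussian variances are bounded above and away from zero. This is exactly the content of Definition~\ref{def:regression:setaki}, Lemma~\ref{lem:thirdLemma}, and Theorem~\ref{thm:conditionsSatisfied}, and the variance lower bound is the delicate part: it requires the matrix concentration argument (sub-exponential tails of the truncated Gaussian) together with Assumptions~\ref{asp:survivalProbability} and \ref{asp:logkAssumption} to show that $\sum_i \log(1/\alpha(\vec{w},\vec{x}^{(i)}))\vec{x}^{(i)}\vec{x}^{(i)T}$ remains comparable to $\sum_i \vec{x}^{(i)}\vec{x}^{(i)T}$ throughout $\Domain$, not just at $\vec{w}^*$ — this is where the shifting means caused by the different covariates make the argument genuinely harder than in the truncated-Gaussian special case of \cite{DaskalakisGTZ18}. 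Once that is secured, the remaining steps are routine.
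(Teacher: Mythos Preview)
Your proposal follows essentially the same route as the paper: set up the negative log-likelihood, restrict to the convex set $\Domain$ of Definition~\ref{def:regression:setaki}, invoke Lemma~\ref{lem:thirdLemma} for feasibility of $\vec{w}^*$, Theorem~\ref{thm:conditionsSatisfied} for strong convexity and the gradient second-moment bound, Section~\ref{sec:projectionAlgorithmProof} for efficient projection, and then apply the without-replacement SGD result of \cite{Shamir16}, finishing with Markov's inequality and strong convexity to pass from function value to parameter distance. The identification of the ``hard part'' (the variance lower bound uniformly over $\Domain$) is also on target.

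One computational slip is worth fixing because it muddles the logic. You write the Hessian of the per-sample negative log-likelihood as $\bigl(1+\Var_{z\sim\normal(\vec{w}^T\vec{x}^{(i)},1,S)}[z]\bigr)\vec{x}^{(i)}\vec{x}^{(i)T}$ and conclude it is $\succeq \vec{x}^{(i)}\vec{x}^{(i)T}$. In fact the two pieces combine to give exactly $\Var_{z}[z]\,\vec{x}^{(i)}\vec{x}^{(i)T}$: the $+1$ from the quadratic term cancels against a $-1$ coming from differentiating $\log\normal(\vec{w}^T\vec{x},1;S)$ (compare Eq.~\eqref{eq:HessianofLikelihoodFiniteSamples}). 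This matters: if your formula were right, then by Assumption~\ref{asp:logkAssumption} one would get $\nabla^2\bar\ell\succeq \matr{X}\succeq b^2\matr{I}$ for free, and the entire machinery of Theorem~\ref{thm:conditionsSatisfied} would be unnecessary. The correct Hessian is only $\Var[z]\,\vec{x}^{(i)}\vec{x}^{(i)T}$, which is why the variance \emph{lower} bound (Lemma~\ref{lem:fourthLemma} together with the survival-probability control over $\Domain$) is genuinely the crux --- as you in fact go on to say. With that correction, your plan matches the paper's proof.
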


  As we explained in the introduction, our estimation algorithm is Projected
Stochastic Gradient Descent without replacement for maximizing the population
log-likelihood function with the careful choice of the projection set. We first
present an outline of the proof of Theorem \ref{thm:estimationTheorem} and then
we present the individual lemmas that complete every step of the proof.

  The framework that we use for to analyze the Projected Stochastic Gradient
Descent without replacement is based on the paper by Ohad Shamir
\cite{Shamir16}. We start by presenting this framework adapted so that is fits
with our our truncated regression setting.

\subsection{Stochastic Gradient Descent Without Replacement}
\label{sec:sgdWithoutReplacement}

  Let  $f : \reals^k \to \reals$ be a convex function of the form
\[ f(\vec{w}) = \frac{1}{n} \sum_{i = 1}^n f_i(\vec{w}), \]
\noindent where we assume that the function $f_i$ are enumerated in a random
order. The following algorithm describes projected stochastic gradient descent
without replacement applied to $f$, with projection set
$\Domain \subseteq \reals^k$. We define formally the particular set $\Domain$
that we consider in Definition \ref{def:regression:setaki}. For now $\Domain$
should be thought of an arbitrary convex subset of $\reals^k$.

\begin{algorithm}[H]
\caption*{\textbf{Algorithm ($\star$). Projected SGD Without Replacement for $\boldsymbol{\lambda}$-Strongly Convex Functions.}}
\begin{algorithmic}[1]
  \State $\vec{w}^{(0)} \gets$ arbitrary point in $\Domain$ \Comment{(a) \textit{initial feasible point}}
  \For{$i = 1, \dots, n$}
    \State Sample $\vec{v}^{(i)}$ such that $\Exp\left[ \vec{v}^{(i)} \mid \vec{w}^{(i - 1)}\right] = \nabla f_i(\vec{w}^{(i - 1)})$ \Comment{(b) \textit{estimation of gradient}}
    \State $\vec{r}^{(i)} \gets \vec{w}^{(i - 1)} - \frac{1}{\lambda \cdot i} \vec{v}^{(i)}$
    \State $\vec{w}^{(i)} \gets \argmin_{\vec{w} \in \Domain} \norm{\vec{w} - \vec{r}^{(i)}}$ \Comment{(c) \textit{projection step}}
  \EndFor \label{euclidendwhile}
  \State \textbf{return} $\bar{\vec{w}} \gets \frac{1}{M} \sum_{i = 1}^M \vec{w}^{(i)}$
\end{algorithmic}
\label{alg:projectedSGDGeneral}
\end{algorithm}

  Our goal is to apply the Algorithm
\hyperref[alg:projectedSGDGeneral]{($\star$)} to the negative log-likelihood
function of the truncated linear regression model. It is clear from the above
description that in order to apply Algorithm
\hyperref[alg:projectedSGDGeneral]{($\star$)} we have to solve the following
three algorithmic problems
\begin{enumerate}
  \item[(a)] \textbf{initial feasible point:} efficiently compute an initial
             feasible point in $\Domain$,
  \item[(b)] \textbf{unbiased gradient estimation:} efficiently sample an
             unbiased estimation of each $\nabla f_i$,
  \item[(c)] \textbf{efficient projection:} design an efficient algorithm to
             project to the set $\Domain$.
\end{enumerate}
Solving (a) - (c) is the first step in the proof of Theorem
\ref{thm:estimationTheorem}. Then our goal is to apply the following Theorem 3
of \cite{Shamir16}.

\begin{theorem}[Theorem 3 of \cite{Shamir16}] \label{thm:mainSGDAnalysis}
	  Let $f : \reals^k \to \reals$ be a convex function, such that
  $f(\vec{w}) = \frac{1}{n} \sum_{i = 1}^n f_i(\vec{w})$ where
  $f_i(\vec{w}) = c_i \cdot \vec{w}^T \vec{x}^{(i)} + q(\vec{w})$,
  $c_i \in \reals$ and $\vec{x}^{(i)} \in \reals^k$ with
  $\norm{\vec{x}^{(i)}}_2 \le 1$. Let also $\vec{w}^{(1)}, \dots, \vec{w}^{(n)}$ be
  the sequence produced by Algorithm
  \hyperref[alg:projectedSGDGeneral]{($\star$)} where
  $\vec{v}^{(1)}, \dots, \vec{v}^{(n)}$ is a sequence of random vectors such
  that
  $\Exp\left[ \vec{v}^{(i)} \mid \vec{w}^{(i - 1)}\right] = \nabla f_i(\vec{w}^{(i - 1)})$
  for all $i \in [n]$ and
  $\vec{w}^* = \arg \min_{\vec{w} \in \Domain} f(\vec{w})$ be a minimizer
  of $f$. If we assume the following:
  \begin{enumerate}
    \item[\emph{(i)}] \emph{\textbf{bounded variance step:}}
                      $\Exp\left[ \norm{\vec{v}^{(i)}}_2^2 \right] \le \rho^2$,
    \item[\emph{(ii)}] \emph{\textbf{strong convexity:}} $q(\vec{w})$ is
                      $\lambda$-strongly convex,
    \item[\emph{(iii)}] \emph{\textbf{bounded parameters:}} the diameter of
                      $\Domain$ is at most $\rho$ and also
                      $\max_i \abs{c_i} \le \rho$,
  \end{enumerate}
  \noindent then,
  $\Exp\left[ f(\vec{\bar{w}}) \right] - f(\vec{w}^*) \le c \cdot \frac{\rho^2}{\lambda n} \cdot \log(n),$
  where $\bar{\vec{w}}$ is the output of the Algorithm
  \hyperref[alg:projectedSGDGeneral]{($\star$)} and $c \in \reals_+$.
\end{theorem}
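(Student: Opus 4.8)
The plan is to reduce the without-replacement analysis to the classical strongly-convex SGD descent bound plus a single correction term that captures the bias introduced by sampling without replacement, and then to control that correction by exploiting the special ``linear-plus-shared-regularizer'' structure $f_i(\vec{w}) = c_i \langle \vec{x}^{(i)}, \vec{w}\rangle + q(\vec{w})$. Throughout I write $\eta_i = 1/(\lambda i)$, $\vec{a}_j = c_j \vec{x}^{(j)}$, $\bar{\vec{g}} = \frac{1}{n}\sum_j \vec{a}_j = \nabla f(\vec{w}) - \nabla q(\vec{w})$, and I let $\sigma$ denote the random ordering, so that the sample processed at step $i$ has gradient $\nabla f_i(\vec{w}^{(i-1)}) = \vec{a}_{\sigma(i)} + \nabla q(\vec{w}^{(i-1)})$.

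First I would run the standard strongly-convex descent argument. Non-expansiveness of the projection onto $\Domain$ together with $\vec{w}^* \in \Domain$ gives $\norm{\vec{w}^{(i)} - \vec{w}^*}^2 \le \norm{\vec{w}^{(i-1)} - \vec{w}^*}^2 - 2\eta_i \langle \vec{v}^{(i)}, \vec{w}^{(i-1)} - \vec{w}^*\rangle + \eta_i^2 \norm{\vec{v}^{(i)}}^2$. Taking the conditional expectation and invoking the unbiasedness $\Exp[\vec{v}^{(i)} \mid \vec{w}^{(i-1)}] = \nabla f_i(\vec{w}^{(i-1)})$, the $\lambda$-strong convexity of $q$ (hence of each $f_i$), and the bounded-variance assumption $\Exp\norm{\vec{v}^{(i)}}^2 \le \rho^2$, the usual telescoping with step size $\eta_i = 1/(\lambda i)$ yields $\frac{1}{n}\sum_i \Exp[f_i(\vec{w}^{(i-1)}) - f_i(\vec{w}^*)] \le c\, \rho^2 \log(n)/(\lambda n)$. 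The key point is that this step never uses independence of $f_i$ from $\vec{w}^{(i-1)}$, only the conditional unbiasedness, so it survives sampling without replacement verbatim. Since $\Exp[f(\bar{\vec{w}})] - f(\vec{w}^*) \le \frac{1}{n}\sum_i \Exp[f(\vec{w}^{(i)}) - f(\vec{w}^*)]$ by convexity of $f$ and Jensen, and since the shared term $q$ cancels between $f_i$ and $f$, the entire gap between the controlled quantity and the target is the correction $\Delta = \frac{1}{n}\sum_i \Exp\langle \bar{\vec{g}} - \vec{a}_{\sigma(i)}, \vec{w}^{(i-1)} - \vec{w}^*\rangle$.

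To bound $\Delta$ I would condition on the history $\mathcal{F}_{i-1}$ through step $i-1$. Under uniform sampling without replacement, $\Exp[\vec{a}_{\sigma(i)} \mid \mathcal{F}_{i-1}]$ is the average of the not-yet-used $\vec{a}_j$, and a short computation gives $\Exp[\langle \bar{\vec{g}} - \vec{a}_{\sigma(i)}, \vec{w}^{(i-1)} - \vec{w}^*\rangle \mid \mathcal{F}_{i-1}] = \frac{1}{n-i+1}\langle \vec{Z}_{i-1}, \vec{w}^{(i-1)} - \vec{w}^*\rangle$, where $\vec{Z}_{i-1} = \sum_{t<i}(\vec{a}_{\sigma(t)} - \bar{\vec{g}})$ is the centered partial sum. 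The finite-population variance identity for sampling without replacement from the centered family $\{\vec{a}_j - \bar{\vec{g}}\}$ gives $\Exp\norm{\vec{Z}_{i-1}}^2 \le \frac{(i-1)(n-i+1)}{n-1}\rho^2 \le (i-1)\rho^2$; here the assumptions $\norm{\vec{x}^{(i)}}_2 \le 1$ and $\max_i \abs{c_i} \le \rho$ are exactly what make $\norm{\vec{a}_j} \le \rho$. Pairing this with Cauchy--Schwarz reduces the problem to controlling $\Exp\norm{\vec{w}^{(i-1)} - \vec{w}^*}^2$.

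The main obstacle is precisely this correction. A naive bound that only uses $\norm{\vec{w}^{(i-1)} - \vec{w}^*} \le \rho$ (the diameter of $\Domain$) yields $\Delta = O(\rho^2/(\lambda \sqrt{n}))$, which is too weak to match the claimed $O(\rho^2 \log(n)/(\lambda n))$. Recovering the advertised rate requires the strong-convexity contraction $\Exp\norm{\vec{w}^{(i)} - \vec{w}^*}^2 = O(\rho^2/(\lambda^2 i))$, which, combined with $\Exp\norm{\vec{Z}_{i-1}}^2 \le (i-1)\rho^2$ and the $\frac{1}{n-i+1}$ weight, makes $\Delta$ telescope to $O(\rho^2/(\lambda n))$. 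The genuine difficulty is that the distance recursion obtained from the descent inequality, namely $\Exp\norm{\vec{w}^{(i)} - \vec{w}^*}^2 \le (1 - 2\eta_i\lambda)\Exp\norm{\vec{w}^{(i-1)} - \vec{w}^*}^2 - 2\eta_i\,\Exp\langle \vec{a}_{\sigma(i)} - \bar{\vec{g}}, \vec{w}^{(i-1)} - \vec{w}^*\rangle + \eta_i^2 \rho^2$, contains the very same without-replacement bias term, so the contraction estimate and the correction bound are coupled through $\vec{Z}_{i-1}$. I would therefore close the argument by a joint induction on $i$: positing $\Exp\norm{\vec{w}^{(i)} - \vec{w}^*}^2 \le K\rho^2/(\lambda^2 i)$, verifying via the coupled recursion (with the $\sqrt{\Exp\norm{\vec{Z}_{i-1}}^2}$ feedback handled by Cauchy--Schwarz) that it closes for a suitable absolute constant $K$, then substituting back to obtain $\Delta = O(\rho^2/(\lambda n))$, and finally adding this to the Step-one bound to conclude $\Exp[f(\bar{\vec{w}})] - f(\vec{w}^*) \le c\,\rho^2 \log(n)/(\lambda n)$, as in the framework of \cite{Shamir16}.
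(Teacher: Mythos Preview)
The paper does not prove this theorem itself. It is stated as Theorem~3 of \cite{Shamir16}, and the only justification the paper offers is the short paragraph immediately following the statement, which notes that the sole adaptation made here (allowing $\vec{v}^{(i)}$ to be an unbiased estimate of $\nabla f_i(\vec{w}^{(i-1)})$ rather than the exact gradient) does not affect the argument in Section~6.5 of \cite{Shamir16}. There is no independent proof in the paper to compare against.

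Your proposal, by contrast, is a reconstruction of Shamir's actual argument: the standard strongly-convex descent inequality telescoped with $\eta_i = 1/(\lambda i)$; the observation that the special structure $f_i(\vec{w}) = c_i\langle \vec{x}^{(i)}, \vec{w}\rangle + q(\vec{w})$ reduces the without-replacement bias to the linear correction $\Delta = \frac{1}{n}\sum_i \Exp\langle \bar{\vec{g}} - \vec{a}_{\sigma(i)}, \vec{w}^{(i-1)} - \vec{w}^*\rangle$; the finite-population variance bound $\Exp\norm{\vec{Z}_{i-1}}^2 \le (i-1)\rho^2$ on the centered partial sum; and the coupled induction to obtain the $O(\rho^2/(\lambda^2 i))$ distance contraction needed to make $\Delta$ sum to $O(\rho^2/(\lambda n))$. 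This is the correct skeleton of the \cite{Shamir16} proof, and you have correctly flagged that the naive diameter bound is too weak and that the distance recursion and the correction term are genuinely coupled. Closing that joint induction cleanly is the main technical work in Shamir's Section~6.5; you have not spelled it out, but you have identified exactly what has to be done. So your proposal goes well beyond what the paper itself provides, and is aligned with the cited source.
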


  Theorem \ref{thm:mainSGDAnalysis} is essentially the same as Theorem 3 of
\cite{Shamir16}, slightly adapted to fit to our problem. The bigger difference
is that in \cite{Shamir16} the variable $\vec{v}^{(i)}$ is exactly equal to the
gradient $\nabla f_i(\vec{w}^{(i - 1)})$ instead of being an unbiased estimate
of $\nabla f_i(\vec{w}^{(i - 1)})$. It is easy to check in Section 6.5 of
\cite{Shamir16} that this slight difference does not change the proof and the
above theorem holds.

  As we can see from the expression of Theorem \ref{thm:mainSGDAnalysis} one
bottleneck is that it applies only in the setting where
$\norm{\vec{x}^{(i)}}_2 \le 1$. To solve our problem in our more general setting
where we have only assumed that $\norm{\vec{x}^{(i)}}_{\infty} \le B$, we make
sure, before running the algorithm, to divide all the covariates $\vec{x}^{(i)}$
by $B \sqrt{k}$. This way the norm of $\vec{w}$ will correspondingly multiplied
by $B \sqrt{k}$. So for the rest of the proof we may replace the pair of
assumptions $\norm{\vec{x}^{(i)}}_{\infty} \le B$ and $\norm{\vec{w}}_2 \le C$
of Theorem \ref{thm:estimationTheorem} with the pair of assumptions
\begin{align} \label{eq:normalizedAssumptions}
  \norm{\vec{x}^{(i)}}_2 \le 1 \quad \mathrm{and} \quad \norm{\vec{w}}_2 \le \bar{B}
\end{align}
\noindent where $\bar{B} = B \cdot C \cdot \sqrt{k}$.

\subsection{Outline of Proof of Theorem \ref{thm:estimationTheorem}}
\label{sec:outline}

  In this section we outline how to use Theorem \ref{thm:mainSGDAnalysis} for
the estimation of the parameters of the truncated linear regression that we
described in the Section \ref{sec:truncated}. Our abstract goal is to maximize
the \textit{population} log-likelihood function using projected stochastic
gradient descent without replacement. We start with the definition of the
log-likelihood function and we proceed in the next sections with the necessary
lemmas to prove the algorithmic properties (a) - (c) and the statistical
properties (i) - (iv)\footnote{Property (iv) is not discussed yet, but we
explain it later in this section.} that allow us to use Theorem
\ref{thm:mainSGDAnalysis}.
\medskip

  We first present the negative log-likelihood of a single sample and then we
present the population version of the negative log-likelihood function and its
first two derivatives.

  Given the sample $(\vec{x}, y) \in \reals^k \times \reals$, the
log-likelihood that $(\vec{x}, y)$ is a sample of the form the truncated linear
regression model \eqref{eq:truncatedLinearRegressionDefinition}, with survival
set $S$ and parameters $\vec{w}$ is equal to
\begin{align} \label{eq:logLikelihoodOneSample}
  \ell(\vec{w}; \vec{x}, y) \triangleq & - \frac{1}{2} (y - \vec{w}^T \vec{x})^2 - \log \left( \int_S \exp \left( - \frac{1}{2} (z - \vec{w}^T \vec{x})^2 \right) dz \right) \nonumber \\
                            = & - \frac{1}{2} y^2 + y \cdot \vec{w}^T \vec{x}
                                - \log \left( \int_S \exp \left(-
                                      \frac{1}{2} z^2 + z \cdot \vec{w}^T
                                      \vec{x}
                                  \right) dz \right)
\end{align}

\noindent The population log-likelihood function with $n$ samples is equal to

\begin{align} \label{eq:logLikelihoodFiniteSamples}
\bar{\ell}(\vec{w}) \triangleq \frac{1}{n} \sum_{i = 1}^n \Exp_{y \sim \normal(\vec{w}^{* T} \vec{x}^{(i)}, 1, S)} \left[ \ell(\vec{w}; \vec{x}^{(i)}, y) \right].
\end{align}

\noindent We now compute the gradient of $\bar{\ell}(\vec{w})$.

\begin{align} \label{eq:gradientofLikelihoodFiniteSamples}
\nabla \bar{\ell}(\vec{w}) = & ~
  \frac{1}{n}
    \sum_{i = 1}^n
      \Exp_{y \sim \normal(\vec{w}^{* T} \vec{x}^{(i)}, 1, S)} \left[
        y \cdot \vec{x}^{(i)}     \right]
  -   \frac{1}{n}
  \sum_{i = 1}^n
    \Exp_{z \sim \normal(\vec{w}^T \vec{x}^{(i)}, 1, S)}
      \left[z \cdot \vec{x}^{(i)}    \right].
\end{align}

\noindent Finally, we compute the Hessian $\Hessian_{\bar{\ell}}$
\begin{align} \label{eq:HessianofLikelihoodFiniteSamples}
\Hessian_{\bar{\ell}}
        = & - \frac{1}{n}
            \sum_{i = 1}^n
              \Cov_{z \sim \normal(\vec{w}^T \vec{x}^{(i)},
                   1, S)} \left[z \cdot \vec{x}^{(i)} , z \cdot \vec{x}^{(i)} \right].
\end{align}

  Since the covariance matrix of a random variable is always positive
semidefinite, we conclude that $\Hessian_{\bar{\ell}}$ is negative
semidefinite, which implies the following lemma.

\begin{lemma} \label{lem:populationLogLikelihoodIsConcave}
    The population log-likelihood function $\bar{\ell}(\vec{w})$ is a concave
  function.
\end{lemma}

  Our goal is to use Theorem \ref{thm:mainSGDAnalysis} with
$f(\vec{w})$ equal to $- \bar{\ell}(\vec{w})$ but ignoring the parts that do
not have any dependence of $\vec{w}$. For our analysis to work we choose we the
following decomposition of $f$ into a sum of the following $f_i$'s
\begin{align} \label{eq:logLikelihoodDecomposition}
  f_i(\vec{w}) & = \Exp_{y \sim \normal(\vec{w}^{*T} \vec{x}^{(i)}, 1, S)}\left[ - y \cdot \vec{w}^T \vec{x}^{(i)} \right] + \frac{1}{n} \sum_{i = 1}^n \log\left(\int_S \exp\left( - \frac{1}{2} z^2 + z \cdot \vec{w}^T \vec{x}^{(i)} \right) dz\right).
\end{align}
It is easy to see that the above functions $f_i$ are in the form of Theorem
\ref{thm:mainSGDAnalysis} with
\[ c_i \triangleq - \Exp_{y \sim \normal(\vec{w}^{*T} \vec{x}^{(i)}, 1, S)}\left[y \right] \quad \text{ and } \]
\[ q(\vec{w}) \triangleq \frac{1}{n} \sum_{i = 1}^n \log\left(\int_S \exp\left( - \frac{1}{2} z^2 + z \cdot \vec{w}^T \vec{x}^{(i)} \right) dz\right). \]

  Unfortunately, none of the properties (i) - (iii) of $f$ hold for all vectors
$\vec{w} \in \reals^k$ and for this reason, we add the projection step. We
identify a projection set $\Domain_{r^*, \bar{B}}$ such that log-likelihood
satisfies both (i) and (ii) for all vectors
$\vec{w} \in \Domain_{r^*, \bar{B}}$.

\begin{definition}[\textsc{Projection Set}] \label{def:regression:setaki}
  We define
  \[ \Domain_{r, B} = \left\{ \vec{w} \in \reals^{k} \mid
     \sum_{i = 1}^n \left(y^{(i)} - \vec{w}^T \vec{x}^{(i)}\right)^2
     \vec{x}^{(i)} \vec{x}^{(i) T} \preceq r \sum_{i = 1}^n \vec{x}^{(i)} \vec{x}^{(i) T} \text{ and } \norm{\vec{w}}_2 \le B \right\}. \]
  We set $r^* = 4 \log(2/a) + 7$ and $\bar{B}$ from
  \eqref{eq:normalizedAssumptions}. We say that $\vec{x} \in \reals^k$ is
  \textbf{feasible} if and only if $\vec{x} \in \Domain_{r^*, \bar{B}}$.
\end{definition}

\noindent Using the projection set $\Domain_{r^*, \bar{B}}$, we can prove (i),
(ii) and (iii) and hence we can apply Theorem \ref{thm:mainSGDAnalysis}. The
last step is to transform the conclusions of Theorem \ref{thm:mainSGDAnalysis}
to guarantees in the parameter space. For this we use again the strong convexity
of $f$ which implies that closeness in the objective value translates to
closeness in the parameter space. For the latter we also need the following
property:
\begin{enumerate}
  \item[(iv)] \textbf{feasibility of optimal solution:}
              $\vec{w}^* \in \Domain_{r^*, \bar{B}}$.
\end{enumerate}

With these definitions in mind we are ready to sketch how to solve the
algorithmic problems (a) - (c) and prove the statistical properties (i) -
(iv). For the problem (a) we observe that is reducible to (c) since once we
have an efficient procedure to project we can start from an arbitrary point in
$\reals^k$, e.g. $\vec{w} = \vec{0}$ and project to $\Domain_{r^*, B}$ and this
is our initial point.
\begin{enumerate}
          \item In Section \ref{sec:technical} we provide some technical lemmas that
    are useful to understand the formal statements of the rest of the proof.
  \item In Section \ref{sec:gradient} we present the details of the
    Algorithm \ref{alg:rejectionSampling} that is used to compute an unbiased
    estimation of the gradient, which gives a solution to the algorithmic
    problem (b).
  \item In Section \ref{sec:projectionAlgorithmProof} we present a
    detailed analysis of our projection Algorithm
    \ref{alg:projectionAlgorithm}, which gives a solution to algorithmic
    problems (a) and (c).
  \item In Section \ref{sec:statistical} we present the statements that
    prove the (i) bounded variance and (ii) strong convexity of the
    log-likelihood function. This is the main technical contribution of the
    paper and uses all the results that we have proved in Section
    \ref{sec:technical} together with Assumptions
    \ref{asp:survivalProbability}, \ref{asp:logkAssumption}.
  \item In Section \ref{sec:optimal} we prove the feasibility of the
    optimal solution, i.e. $\vec{w}^* \in \Domain_{r^*, B}$ which proves the
    property (iv).
  \item In Section \ref{sec:boundedParameters} we prove that the diameter
    of $\Domain_{r, B}$ is bounded and that the coefficient $c_i$ of the
    Theorem \ref{thm:mainSGDAnalysis} are also bounded, which proves the
    property (iii).
  \item Finally in Section \ref{sec:mainResultProof} we use all the mentioned
    results to prove our main Theorem \ref{thm:estimationTheorem}.
\end{enumerate}

\subsection{Survival Probability of Feasible Points} \label{sec:technical}

  One necessary technical lemma is how to correlate the survival probabilities
$\alpha(\vec{w}, \vec{x})$, $\alpha(\vec{w}', \vec{x})$ for two different
points $\vec{w}$, $\vec{w}'$. In Lemma \ref{lem:firstLemma} we show that this
is possible based on their distance with respect to $\vec{x}$. Then in Lemma
\ref{lem:secondLemma} we show how the expected second moment with respect to
the truncated Guassian error is related with the value of the corresponding
survival probability. We present the proofs of Lemma \ref{lem:firstLemma} and
Lemma \ref{lem:secondLemma} in the Appendix \ref{sec:firstLemmaProof} and
\ref{sec:secondLemmaProof} respectively.

\begin{lemma} \label{lem:firstLemma}
    Let $\vec{x}$, $\vec{x}'$, $\vec{w}$, $\vec{w}' \in \reals^{k}$, then
  $\log \left(\frac{1}{\alpha(\vec{w}, \vec{x})}\right) \leq 2 \log \left(\frac{1}{\alpha(\vec{w}', \vec{x})}\right) + \abs{\left( \vec{w} - \vec{w}' \right)^T \vec{x}}^2 + 2$
  and also
  $\log \left(\frac{1}{\alpha(\vec{w}, \vec{x})}\right) \leq 2 \log \left(\frac{1}{\alpha(\vec{w}, \vec{x}')}\right) + \abs{\vec{w}^T \left(\vec{x} - \vec{x}'\right)}^2 + 2$.
\end{lemma}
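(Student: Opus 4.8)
The plan is to reduce both displayed inequalities to a single one-dimensional statement about the Gaussian mass of $S$, which I then establish via Cauchy--Schwarz. For $\mu \in \reals$ write $g(\mu) = \normal(\mu,1;S) = \int_S \phi(z-\mu)\,dz$, where $\phi$ denotes the standard Gaussian density, so that $\alpha(\vec{w},\vec{x}) = g(\vec{w}^T\vec{x})$. The first inequality is the special case $\mu = \vec{w}^T\vec{x}$, $\mu' = \vec{w}'^T\vec{x}$ (so that $\mu-\mu' = (\vec{w}-\vec{w}')^T\vec{x}$), and the second is the special case $\mu = \vec{w}^T\vec{x}$, $\mu' = \vec{w}^T\vec{x}'$ (so that $\mu-\mu' = \vec{w}^T(\vec{x}-\vec{x}')$), of the claim: for all $\mu,\mu' \in \reals$,
\[ \log\!\left(\frac{1}{g(\mu)}\right) \le 2\log\!\left(\frac{1}{g(\mu')}\right) + (\mu-\mu')^2 + 2. \]
If $g(\mu')=0$ the right-hand side is $+\infty$ and there is nothing to prove, so I may assume $g(\mu'),g(\mu)>0$.

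To prove this one-dimensional claim, apply Cauchy--Schwarz to the factorization $\phi(z-\mu') = \sqrt{\phi(z-\mu)}\cdot\phi(z-\mu')/\sqrt{\phi(z-\mu)}$ integrated over $z\in S$:
\[ g(\mu')^2 = \left(\int_S \phi(z-\mu')\,dz\right)^2 \le \left(\int_S \phi(z-\mu)\,dz\right)\left(\int_S \frac{\phi(z-\mu')^2}{\phi(z-\mu)}\,dz\right) = g(\mu)\cdot\int_S \frac{\phi(z-\mu')^2}{\phi(z-\mu)}\,dz. \]
The only remaining computation is to simplify the last integrand by completing the square in $z$; a routine calculation gives $\phi(z-\mu')^2/\phi(z-\mu) = e^{(\mu-\mu')^2}\,\phi\!\left(z-(2\mu'-\mu)\right)$, i.e.\ it equals $e^{(\mu-\mu')^2}$ times the density of $\normal(2\mu'-\mu,1)$, the reflection of $\mu$ about $\mu'$. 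Hence $\int_S \phi(z-\mu')^2/\phi(z-\mu)\,dz = e^{(\mu-\mu')^2}\,\normal(2\mu'-\mu,1;S) \le e^{(\mu-\mu')^2}$, since the Gaussian mass of any measurable set is at most $1$. Plugging back in gives $g(\mu) \ge e^{-(\mu-\mu')^2}g(\mu')^2$, and taking $\log(1/\cdot)$ of both sides yields $\log(1/g(\mu)) \le (\mu-\mu')^2 + 2\log(1/g(\mu'))$ --- in fact slightly stronger than what is stated, so the additive $+2$ is pure slack.

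I do not expect a genuine obstacle: the entire content is the elementary Gaussian identity obtained by completing the square, together with the observation that the ``worst-case'' shifted mean appearing is the reflected point $2\mu'-\mu$ rather than $\mu$ or $\mu'$. The only care needed is to dispatch the degenerate case $g(\mu')=0$ as above (in every application of the lemma the relevant survival probabilities will be positive, e.g.\ by Assumption \ref{asp:survivalProbability} and feasibility of the iterates). Substituting the two choices of $(\mu,\mu')$ listed in the first paragraph then yields precisely the two inequalities of Lemma \ref{lem:firstLemma}.
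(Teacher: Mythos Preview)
Your proof is correct and in fact yields the inequality without the additive $+2$; it takes a genuinely different route from the paper. The paper writes $\alpha(\vec{w},\vec{x})$ as an expectation under $\normal(\vec{w}'^T\vec{x},1)$ via the Radon--Nikodym derivative, applies Jensen to push the expectation inside the exponential, uses the quadratic inequality $(y-\vec{w}^T\vec{x})^2 \le 2(y-\vec{w}'^T\vec{x})^2 + 2|(\vec{w}-\vec{w}')^T\vec{x}|^2$, and then invokes Lemma~\ref{lem:secondLemma} to bound the resulting truncated second moment $\Exp[(y-\vec{w}'^T\vec{x})^2]$; the $+2$ in the statement is exactly the slack inherited from the $+4$ in Lemma~\ref{lem:secondLemma}. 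Your Cauchy--Schwarz argument sidesteps both Jensen and Lemma~\ref{lem:secondLemma} entirely: the completing-the-square identity $\phi(z-\mu')^2/\phi(z-\mu)=e^{(\mu-\mu')^2}\phi(z-(2\mu'-\mu))$ makes the comparison exact up to the reflected-mean mass $\normal(2\mu'-\mu,1;S)\le 1$, so no moment bound is needed. The upshot is that your proof is self-contained and sharper, while the paper's approach ties the lemma into the same second-moment machinery (Lemma~\ref{lem:secondLemma}) that is reused elsewhere.
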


\begin{lemma} \label{lem:secondLemma}
    Let $\vec{x} \in \reals^k$, $\vec{w} \in \reals^{k}$, then
  $\Exp_{y \sim \normal(\vec{w}^T \vec{x}, 1, S)}\left[ (y-\vec{w}^T\vec{x})^2 \right] \le 2 \log \left( \frac{1}{\alpha(\vec{w}, \vec{x})} \right) + 4$.
\end{lemma}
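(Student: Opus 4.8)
The statement to prove is Lemma~\ref{lem:secondLemma}: a bound on the second moment $\Exp_{y \sim \normal(\vec{w}^T \vec{x}, 1, S)}[(y - \vec{w}^T\vec{x})^2]$ in terms of the survival probability $\alpha(\vec{w}, \vec{x}) = \normal(\vec{w}^T\vec{x}, 1; S)$. Let me write $\mu = \vec{w}^T\vec{x}$ and $\alpha = \alpha(\vec{w},\vec{x})$. Since shifting the coordinate by $\mu$ turns $\normal(\mu, 1, S)$ into $\normal(0, 1, S - \mu)$, we may as well prove: for a standard Gaussian $g \sim \normal(0,1)$ conditioned on landing in a measurable set $T \subseteq \reals$ with $\Prob[g \in T] = \alpha$, we have $\Exp[g^2 \mid g \in T] \le 2\log(1/\alpha) + 4$.

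**Main approach.** The plan is to compare the truncated density to a fixed reference region. The conditional density of $g$ given $g \in T$ is $\phi(z)/\alpha$ on $T$, where $\phi$ is the standard Gaussian density. The key idea: the worst case for the second moment, given a fixed mass $\alpha$, is when $T$ consists of the two tails $(-\infty, -\beta] \cup [\beta, \infty)$ for the $\beta$ making this have mass $\alpha$, since pushing mass outward (to points of larger $|z|$) only increases $\Exp[g^2\mid g\in T]$ while keeping $\alpha$ fixed. So first I would argue this rearrangement/symmetrization step: among all $T$ with $\normal(0,1;T) = \alpha$, the quantity $\int_T z^2 \phi(z)\,dz$ is maximized when $T = \{z : |z| \ge \beta\}$. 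This is because $z^2\phi(z)$ as a function of the ``level'' $\phi(z)$ is monotone — equivalently, for any target mass we should put it where $z^2$ is largest, which is where $|z|$ is largest, i.e. the outermost region. Then it remains to bound $\Exp[g^2 \mid |g| \ge \beta]$ in terms of $\alpha = \Prob[|g| \ge \beta] = 2(1 - \Phi(\beta))$.

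**Finishing the tail estimate.** For the two-sided tail, $\Exp[g^2 \mid |g|\ge\beta] = \frac{1}{\alpha}\int_{|z|\ge\beta} z^2\phi(z)\,dz$. Integration by parts gives $\int_\beta^\infty z^2\phi(z)\,dz = \beta\phi(\beta) + (1-\Phi(\beta))$, so $\Exp[g^2 \mid |g| \ge \beta] = \frac{2\beta\phi(\beta)}{\alpha} + 1 = \frac{2\beta\phi(\beta)}{2(1-\Phi(\beta))} + 1$. Now I need two facts: (1) a lower bound on the Gaussian tail, $1 - \Phi(\beta) \ge c\,\phi(\beta)/(\beta+1)$ or the standard $1-\Phi(\beta) \ge \frac{\beta}{\beta^2+1}\phi(\beta)$, to control $\frac{\beta\phi(\beta)}{1-\Phi(\beta)} \le \beta^2 + 1$; and (2) an upper bound on $\beta^2$ in terms of $\log(1/\alpha)$, coming from the tail upper bound $\alpha = 2(1-\Phi(\beta)) \le 2\phi(\beta) \le e^{-\beta^2/2}\cdot\sqrt{2/\pi}\,$ (valid for $\beta \ge$ some constant), giving $\beta^2 \le 2\log(1/\alpha) + O(1)$. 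Combining, $\Exp[g^2\mid|g|\ge\beta] \le \beta^2 + 2 \le 2\log(1/\alpha) + O(1)$, and tracking constants carefully should land at $2\log(1/\alpha) + 4$. Small-$\beta$ (equivalently $\alpha$ close to $1$) is a trivial separate case since then $\Exp[g^2\mid g\in T] \le \Exp[g^2]/\alpha = 1/\alpha = O(1) \le 4$ when $\alpha$ is bounded below.

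**Expected obstacle.** The symmetrization step — rigorously showing that the two-tailed set maximizes $\int_T z^2\phi(z)\,dz$ subject to fixed mass — is the conceptual heart; it is a ``bathtub principle'' argument (put mass where the weight $z^2$ is largest), and writing it cleanly requires a short exchange/coupling argument. After that, everything is elementary calculus with Gaussian tail bounds, and the only delicate part is bookkeeping the additive constant to get exactly $4$ rather than some larger absolute constant. An alternative that sidesteps symmetrization entirely: directly bound $\Exp[g^2 \mid g \in T]$ by splitting $T$ into $T \cap \{|z| \le R\}$ and $T \cap \{|z| > R\}$ for $R^2 = 2\log(1/\alpha) + 4$; on the first part $g^2 \le R^2$ trivially, and on the second part $\int_{|z|>R} z^2\phi(z)\,dz = R\phi(R)\cdot 2 + \alpha' $ where $\alpha' = \Prob[|g|>R]$, and one checks $R\phi(R)$ is small enough relative to $\alpha$ by the choice of $R$ — this may be the cleaner route to write down.
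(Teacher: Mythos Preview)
Your proposal is correct and follows essentially the same approach as the paper: reduce to the worst-case two-tailed set $S=(-\infty,-\beta]\cup[\beta,\infty)$, compute $\Exp[g^2\mid |g|\ge\beta]=1+\beta\,\phi(\beta)/(1-\Phi(\beta))$, bound the inverse Mills ratio by $\beta+1/\beta$ to get $\le \beta^2+2$, and finish with the Chernoff-type tail bound $\alpha/2\le e^{-\beta^2/2}$ to obtain $\beta^2\le 2\log(2/\alpha)$. The paper's proof is terser on the symmetrization step (simply asserting the two-tailed set is worst) and does not single out the small-$\beta$ regime, but otherwise the argument is identical; your alternative splitting route is not needed.
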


  One corollary of these lemmas is an interesting property of feasible points,
i.e. points $\vec{w}$ inside $\Domain_{r^*, \bar{B}}$, namely that they satisfy
Assumption \ref{asp:survivalProbability}, under the assumption that
$\vec{w}^* \in \Domain_{r^*, \bar{B}}$, which we will prove later.

\subsection{Unbiased Gradient Estimation} \label{sec:gradient}

  Using \eqref{eq:gradientofLikelihoodFiniteSamples} we have that the gradient
of the function $f_i$ is equal to
\begin{equation} \label{eq:f_iGradient}
  \nabla f_i(\vec{w}) = \Exp_{y \sim \normal(\vec{w}^{* T} \vec{x}^{(i)}, 1, S)} \left[y \cdot \vec{x}^{(i)}\right] - \frac{1}{n} \sum_{j = 1}^n \Exp_{z \sim \normal(\vec{w}^T \vec{x}^{(j)}, 1, S)} \left[z \cdot \vec{x}^{(j)}\right].
\end{equation}
Hence an unbiased estimation of $\nabla f_i(\vec{w})$ can be computed given
one sample from the distribution $\normal(\vec{w}^{* T} \vec{x}^{(i)}, 1, S)$
and one sample from $\normal(\vec{w}^T \vec{x}^{(j)}, 1, S)$ where $j$ is
chosen uniformly at random from the set $[n]$. For the first sample we can
just use $y^{(i)}$. For the second sample though we need a sampling procedure
that given $\vec{w}$ and $\vec{x}$ produces a sample from
$\normal(\vec{w}^T \vec{x}, 1, S)$. For this we could simply use rejection
sampling, but because we have not assumed that $\alpha(\vec{w}, \vec{x})$ is
always large, we use a more elaborate argument starting with following Lemma
\ref{lem:rejectionSamplingLemma} whose proof is presented in the Appendix
\ref{sec:rejectionSamplingLemmaProof}.

\begin{lemma} \label{lem:rejectionSamplingLemma}
    If $\norm{\vec{x}^{(i)}}_2 \le 1$ for all $i \in [n]$ then for all
  $\vec{w} \in \Domain_{r^*, \bar{B}}$ it holds that
  \[ \alpha\left(\vec{w}, \vec{x}^{(i)}\right) \ge \poly(a) \cdot \exp\left(- O(\bar{B})\right). \]
\end{lemma}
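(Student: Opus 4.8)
The plan is to lower bound $\alpha(\vec{w}, \vec{x}^{(i)}) = \normal(\vec{w}^T \vec{x}^{(i)}, 1; S)$ for a feasible $\vec{w}$ by relating it to the survival probability of the true parameter $\vec{w}^*$, which is bounded below by a constant on average via Assumption \ref{asp:survivalProbability}. The difficulty is that Assumption \ref{asp:survivalProbability} only controls a weighted \emph{average} of $\log(1/\alpha(\vec{w}^*, \vec{x}^{(i)}))$, so individual $\alpha(\vec{w}^*, \vec{x}^{(i)})$ could be tiny; however the thickness assumption $\matr{X} \succeq \frac{\log k}{n} \vec{x}^{(i)} \vec{x}^{(i)T}$ limits how much weight any single term can carry, which lets us convert the average bound into a per-sample bound. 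Concretely, I would first argue that for a feasible $\vec{w} \in \Domain_{r^*, \bar{B}}$, the defining inequality $\sum_j (y^{(j)} - \vec{w}^T \vec{x}^{(j)})^2 \vec{x}^{(j)}\vec{x}^{(j)T} \preceq r^* \sum_j \vec{x}^{(j)}\vec{x}^{(j)T}$ combined with Lemma \ref{lem:secondLemma} (applied at $\vec{w}^*$, using $y^{(j)} = \vec{w}^{*T}\vec{x}^{(j)} + \eps^{(j)}$) and Assumption \ref{asp:survivalProbability} shows the quantity $\log(1/\alpha(\vec{w}, \vec{x}^{(i)}))$ is controlled for the "typical" $i$ in an averaged sense.

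The cleanest route, though, is to bound $\alpha(\vec{w},\vec{x}^{(i)})$ directly using only $\norm{\vec{w}}_2 \le \bar B$ and $\norm{\vec{x}^{(i)}}_2 \le 1$, together with the fact that $S$ has nonneglible mass under \emph{some} shift of the Gaussian. By Assumption \ref{asp:survivalProbability} and convexity, there is at least one index $i_0$ with $\alpha(\vec{w}^*, \vec{x}^{(i_0)}) \ge a'$ for a constant $a'$ depending on $a$; actually more is true — a $\mathrm{poly}(a)$ fraction of the $\vec{x}^{(i)}$'s have survival probability $\ge \mathrm{poly}(a)$ under $\vec{w}^*$. Pick such an index and call the corresponding mean $\mu_0 = \vec{w}^{*T}\vec{x}^{(i_0)}$; so $S$ has at least $\mathrm{poly}(a)$ Gaussian mass within an $O(\sqrt{\log(1/a)})$ window around $\mu_0$. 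Now I want to transfer this to $\mu = \vec{w}^T\vec{x}^{(i)}$. Using Lemma \ref{lem:firstLemma} with $\vec{w}' = \vec{w}^*$, $\vec{x}' = \vec{x}^{(i_0)}$ (applying both inequalities in succession — first changing $\vec{w} \to \vec{w}^*$ keeping $\vec{x}^{(i)}$, then changing $\vec{x}^{(i)} \to \vec{x}^{(i_0)}$), I get
\[
\log\!\left(\frac{1}{\alpha(\vec{w},\vec{x}^{(i)})}\right) \le 4\log\!\left(\frac{1}{\alpha(\vec{w}^*,\vec{x}^{(i_0)})}\right) + O\!\left(\abs{(\vec{w}-\vec{w}^*)^T\vec{x}^{(i)}}^2 + \abs{\vec{w}^{*T}(\vec{x}^{(i)}-\vec{x}^{(i_0)})}^2\right) + O(1).
\]
Since $\norm{\vec{w}}_2, \norm{\vec{w}^*}_2 \le \bar B$ and all covariates have unit norm, each cross term is $O(\bar B^2)$. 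This gives $\log(1/\alpha(\vec{w},\vec{x}^{(i)})) = O(\log(1/a)) + O(\bar B^2)$, hence $\alpha(\vec{w},\vec{x}^{(i)}) \ge \mathrm{poly}(a)\cdot e^{-O(\bar B^2)}$.

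The main obstacle is getting the exponent linear in $\bar B$ rather than quadratic: the statement claims $\exp(-O(\bar B))$, not $\exp(-O(\bar B^2))$. This suggests that one should not bound $\abs{(\vec{w}-\vec{w}^*)^T\vec{x}^{(i)}}$ crudely by $2\bar B$, but instead use feasibility more carefully — the projection-set constraint bounds $\sum_i (y^{(i)} - \vec{w}^T\vec{x}^{(i)})^2 \vec{x}^{(i)}\vec{x}^{(i)T}$, and since $y^{(i)} - \vec{w}^T\vec{x}^{(i)} = \eps^{(i)} + (\vec{w}^* - \vec{w})^T\vec{x}^{(i)}$, this actually controls $(\vec{w}^*-\vec{w})^T\vec{x}^{(i)}$ after subtracting off the noise contribution (controlled by Lemma \ref{lem:secondLemma} and Assumption \ref{asp:survivalProbability}). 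Combined with the thickness assumption $\matr{X} \succeq b^2\matr{I}$ bounding $\norm{\vec{w}-\vec{w}^*}_2$ appropriately, one expects to replace one factor of $\bar B$ by something $O(\sqrt{\bar B \cdot \mathrm{polylog}})$ or a constant, yielding the claimed $\exp(-O(\bar B))$. I would carry out the argument in this order: (1) establish that a $\mathrm{poly}(a)$-fraction of indices are "good" for $\vec{w}^*$ from Assumption \ref{asp:survivalProbability}; (2) pick a good anchor index $i_0$; (3) chain the two inequalities of Lemma \ref{lem:firstLemma} to move from $(\vec{w}^*, \vec{x}^{(i_0)})$ to $(\vec{w},\vec{x}^{(i)})$; (4) bound the resulting inner-product error terms, using feasibility of $\vec{w}$ and Lemma \ref{lem:secondLemma} to shave the exponent down to linear in $\bar B$; (5) exponentiate.
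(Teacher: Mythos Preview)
Your ``cleanest route'' in the second paragraph is correct and is a genuinely different argument from the paper's. You anchor at a single good index $i_0$ (whose existence indeed follows from Assumption~\ref{asp:survivalProbability}: applying the matrix inequality to any unit $\vec v$ gives a weighted average of $\log(1/\alpha(\vec w^*,\vec x^{(i)}))$ bounded by $\log(1/a)$, so some index achieves $\alpha(\vec w^*,\vec x^{(i_0)})\ge a$), and then chain the two inequalities of Lemma~\ref{lem:firstLemma}. The paper instead first derives a \emph{weighted-average} bound on $\log(1/\alpha(\vec w,\vec x^{(i)}))$ over all $i$: it writes $|(\vec w-\vec w^*)^T\vec x^{(i)}|^2\le 2|y^{(i)}-\vec w^T\vec x^{(i)}|^2+2|y^{(i)}-\vec w^{*T}\vec x^{(i)}|^2$, multiplies by $(\vec v^T\vec x^{(i)})^2$, sums, and uses feasibility of both $\vec w$ and $\vec w^*$ in $\Domain_{r^*,\bar B}$ together with Assumption~\ref{asp:survivalProbability} to get the average $\le 2\log(1/a)+2r^*+2$. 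It then converts the average to a pointwise bound via the second inequality of Lemma~\ref{lem:firstLemma}, which says every $\log(1/\alpha(\vec w,\vec x^{(j)}))$ is at least $\tfrac12\log(1/\alpha(\vec w,\vec x^{(\ell)}))-|\vec w^T(\vec x^{(\ell)}-\vec x^{(j)})|^2-1$. Your approach is simpler and, notably, does not need $\vec w^*\in\Domain_{r^*,\bar B}$ (Lemma~\ref{lem:thirdLemma}); the paper's route uses the residual constraint of $\Domain_{r^*,\bar B}$ in an essential way to keep the $(\vec w-\vec w^*)^T\vec x^{(i)}$ term controlled by $r^*$ rather than $\bar B^2$ in the averaged inequality.

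Regarding the $\bar B$ versus $\bar B^2$ exponent: your concern is legitimate, but do not spend effort trying to shave it. The paper's own proof incurs the same quadratic loss in the very last step, since $|\vec w^T(\vec x^{(\ell)}-\vec x^{(j)})|^2\le 4\bar B^2$ under $\norm{\vec w}_2\le\bar B$ and $\norm{\vec x^{(i)}}_2\le 1$; the ``$\bar B$'' appearing there (and in the lemma statement) should read $O(\bar B^2)$. Your plan in the last paragraph to use feasibility to control $(\vec w-\vec w^*)^T\vec x^{(i)}$ is exactly what the paper does for the first cross term, but neither you nor the paper can avoid $O(\bar B^2)$ from the $\vec x^{(i)}\to\vec x^{(j)}$ shift, and the downstream uses of the lemma (e.g.\ in the proof of~\eqref{eq:thm:condition1}) only need $\log(1/\alpha)=O(\bar B^2)+O(\log(1/a))$ anyway.
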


\noindent Now if we do not care about computational efficiency we can just
apply rejection sampling using the membership oracle $\memb_S$. But if we make
the additional assumption of Theorem \ref{thm:estimationTheorem} that $S$ is a
union of $r$ intervals then once we have established that the survival
probability $\alpha(\vec{w}, \vec{x}^{(i)})$ is lower bounded by an
exponential on $\poly(B, k)$ we can use a much more efficient sampling
procedure that is based on accurate computations of the error function of a
Gaussian distribution. The latter is a simple algorithm that involves an
inverse transform sampling and we discuss the technical details in the
Appendix \ref{sec:unionIntervalsSampling}.

\subsection{Projection to the Feasible Set}
\label{sec:projectionAlgorithmProof}

  The convex problem we need to solve in this step is the following
  \begin{align} \label{eq:projectionProblem}
    \min_{\vec{z} \in \Domain_{r, B}} \norm{\vec{z} - \vec{w}}_2.
  \end{align}
  \noindent For simplicity in this section we may assume without loss of
  generality that $\sum_{i = 1}^n \vec{x}^{(i)} \vec{x}^{(i) T} = \matr{I}$.

  The main idea of the algorithm to solve \ref{eq:projectionProblem} is to use
the ellipsoid method with separating hyperplane oracle as descripted in Chapter
3 of \cite{GrotschelLS12}. This yields a polynomial time algorithm as it is
proved in \cite{GrotschelLS12}. We now explain in more detail each step of the
algorithm.
\begin{enumerate}
  \item The binary search over $\tau$ is the usual procedure to reduce the
  minimization of the norm $\norm{\vec{z} - \vec{w}}_2$ to satifiability
  queries of a set of convex constraints, in our case
  $\vec{z} \in \Domain_{r, B}$ and $\norm{\vec{z} - \vec{w}}_2 \le \tau$.
  \item The fact that the constraint $\norm{\vec{z} - \vec{w}}_2 \le \tau$ is
  satisfied through the execution of the ellipsoid algorithm is guaranteed
  because of the selection of the initial ellipsoid to be
  \[ \mathcal{E}_0 = \{\vec{z} \mid \norm{\vec{z} - \vec{w}}_2 \le \tau\}. \]
  \item The main technical difficulty is how to find a separating hyperplane
  between a vector $\vec{u}$ that is outside the set $\Domain_{r, B}$ and the
  convex set $\Domain_{r, B}$. First observe that $\vec{z} \in \Domain_{r, B}$
  is equivalent with $\norm{\vec{z}}_2 \le B$ and the following set of
  constraints
  \begin{align} \label{eq:intersectionOfEllipsoidsA}
    \sum_{i = 1}^n \left(y^{(i)} - \vec{z}^T \vec{x}^{(i)}\right)^2 \left( \vec{v}^T \vec{x}^{(i)} \right)^2 \le r  ~~~~ \forall \vec{v} \in \reals^k, ~ \norm{\vec{v}}_2 = 1
  \end{align}
  \noindent which after of simple calculations is equivalent with
  \begin{align} \label{eq:intersectionOfEllipsoids}
    \vec{z}^T \matr{P}_{\vec{v}} \vec{z} + \vec{q}_{\vec{v}}^T \vec{z} + s_{\vec{v}} \le r  ~~~~ \forall \vec{v} \in \reals^k, ~ \norm{\vec{v}}_2 = 1
  \end{align}
  \noindent with
  \begin{align*}
    \matr{P}_{\vec{v}} & = \sum_{i = 1}^n \left(\vec{v}^T \vec{x}^{(i)} \right)^2 \vec{x}^{(i)} \vec{x}^{(i) T}, \\
    \vec{q}_{\vec{v}} & = - 2 \sum_{i = 1}^n y^{(i)} \left(\vec{v}^T \vec{x}^{(i)} \right)^2 \vec{x}^{(i)}  \\
    \text{ and } ~~~~ s_{\vec{v}} & = \sum_{i = 1}^n \left(y^{(i)}\right)^2 \left(\vec{v}^T \vec{x}^{(i)} \right)^2.
  \end{align*}
  \noindent It is clear from the definition that
  $\matr{P}_{\vec{v}} \succeq \matr{0}$. Also observe that for any vector
  $\vec{z}$ such that $\vec{z}^T \matr{P}_{\vec{v}} \vec{z} = 0$ it also holds
  that $\vec{q}_{\vec{v}}^T \vec{z} = 0$. This holds because
  $\vec{z}^T \matr{P}_{\vec{v}} \vec{z} = 0$ is a sum of squares, hence it is
  zero if and only if all the terms are zero and
  $\vec{q}_{\vec{v}}^T \vec{z} = 0$ is a linear combination of these terms.
  Hence for any unit vector $\vec{v}$ the equivalent inequalities
  \eqref{eq:intersectionOfEllipsoidsA}, \eqref{eq:intersectionOfEllipsoids}
  describe an ellipsoid with its interior. Also the constraint
  $\norm{\vec{z}}_2 \le B$ can also be described as an ellipsoid constraint
  hence we may add this to the previous set of ellipsoid constraints that we
  want to satisfy.

  Let us assume that $\vec{u}$ violates some of the ellipsoid inequalities
  \eqref{eq:intersectionOfEllipsoids}. To find such one of the ellipsoids that
  does not contain $\vec{u}$ it suffices to compute the eigenvector $\vec{v}_m$
  that corresponds to the maximum eigenvalue of the following matrix
  \[ \matr{A} = \sum_{i=1}^k\left(y^{(i)} - \vec{u}^T \vec{x}^{(i)}\right)^2 \vec{x_i}^{(i)} \vec{x_i}^{(i) T}. \]
  If $\lambda_{\max}(\matr{A}) \le r$ then $\vec{u}$ satisfies all the
  ellipsoid constraints, otherwise it holds that
  \begin{align} \label{eq:defG}
    g \triangleq \vec{v}_m^T \left( \sum_{i=1}^k\left(y^{(i)} - \vec{u}^T \vec{x}^{(i)}\right)^2 \vec{x_i}^{(i)} \vec{x_i}^{(i) T} \right) \vec{v}_m > r.
  \end{align}
  This implies that $\vec{u}$ is outside the ellipsoid
  \[ \mathcal{E} = \left\{ \vec{z} \mid \vec{z}^T \matr{P}_{\vec{v}_m} \vec{z} + \vec{q}_{\vec{v}_m}^T \vec{z} + s_{\vec{v}_m} \le r \right\} \]
  \noindent and also from the definition of $\Domain_{r, B}$ we have
  $\Domain_{r, B} \subseteq \mathcal{E}$. Hence it suffices to find a hyperplane
  that separates $\vec{u}$ with $\mathcal{E}$. This is an easy task since we can
  define ellipsoid surface $\mathcal{S}$ that is parallel to $\mathcal{E}$ and
  passes through $\vec{u}$ as follows
  \[ \mathcal{S} = \left\{ \vec{z} \mid \vec{z}^T \matr{P}_{\vec{v}_m} \vec{z} + \vec{q}_{\vec{v}_m}^T + s_{\vec{v}_m} = g \right\} \]
  \noindent where $g$ is defined in \eqref{eq:defG} and the tangent hyperplane
  of $\mathcal{S}$ at $\vec{u}$ is a separating hyperplane between $\vec{u}$
  and $\mathcal{E}$. To compute the tangent hyperplane we can compute the
  gradient $\vec{d} = \nabla_{\vec{z}} \left( \vec{z}^T \matr{P}_{\vec{v}_m} \vec{z} + \vec{q}_{\vec{v}_m}^T + s_{\vec{v}_m} \right) \mid_{\vec{z} = \vec{u}}$ and define the following hyperplane
  \begin{align} \label{eq:hyperplaneDefinition}
    \mathcal{H} = \{\vec{z} \mid \vec{d}^T \vec{z} = \vec{d}^T \vec{u} \}.
  \end{align}
\end{enumerate}

\subsection{Bounded Step Variance and Strong Convexity} \label{sec:statistical}

  We are now ready to prove the (i) bounded variance and (ii) strong convexity
properties as descripted in the beginning of Section \ref{sec:estimation}. The
results are summarized in the following Theorem \ref{thm:conditionsSatisfied}.
The proof of Theorem \ref{thm:conditionsSatisfied} is presented in the Appendix
\ref{sec:thm:conditionsSatisfiedProof}.

\begin{theorem} \label{thm:conditionsSatisfied}
    Let $(\vec{x}^{(1)}, y^{(1)}), \dots, (\vec{x}^{(n)}, y^{(n)})$ be $n$
  samples from the linear regression model
  \eqref{eq:truncatedLinearRegressionDefinition} with parameter vector
  $\vec{w}^*$. If Assumptions \ref{asp:survivalProbability} and
  \ref{asp:logkAssumption} and $\norm{\vec{x}^{(i)}}_2 \le 1$ hold, then for
  every $\vec{w} \in \Domain_{r^*, \bar{B}}$ it
  holds that
  \begin{align} \label{eq:thm:condition1}
    \Exp \left[ \norm{\vec{v}^{(i)}}_2^2 \right] \le O(\poly(1/a) \cdot \bar{B}^2),
  \end{align}
        \begin{align} \label{eq:thm:condition2}
    \mathbf{H}_{-\bar{\ell}}(\vec{w}) = \frac{1}{n} \sum_{i = 1}^n \Exp \left[ \left( z^{(i)} - \Exp\left[ z^{(i)} \right] \right)^2 \vec{x}^{(i)} \vec{x}^{(i) T} \right] \succeq e^{-16} \cdot a^{10} \cdot \frac{1}{b^2} \cdot \matr{I},
  \end{align}
  \noindent where $y^{(i)} \sim \normal(\vec{w}^{* T} \vec{x}^{(i)}, 1, S)$,
  $z^{(i)} \sim \normal(\vec{w}^T \vec{x}^{(i)}, 1, S)$, $\vec{v}^{(i)}$ is the
  unbiased estimate of $\nabla f_i(\vec{w})$ according to Algorithm
  \ref{alg:projectedSGD}, and $r^* = 4 \log \left( \nicefrac{2}{a} \right) + 7$.
\end{theorem}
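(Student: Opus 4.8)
I would prove the two displayed inequalities separately. Inequality \eqref{eq:thm:condition1} is a routine second-moment computation once Lemmas \ref{lem:secondLemma} and \ref{lem:rejectionSamplingLemma} are in hand, whereas \eqref{eq:thm:condition2}, the strong-convexity bound, is where essentially all the work lies.

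\textbf{Bounded variance \eqref{eq:thm:condition1}.} By the form of $\nabla f_i$ in \eqref{eq:f_iGradient}, an unbiased estimate of $\nabla f_i(\vec{w})$ at a feasible iterate $\vec{w}$ is $\vec{v}^{(i)} = y^{(i)}\vec{x}^{(i)} - z^{(j)}\vec{x}^{(j)}$, where $j$ is uniform in $[n]$, $y^{(i)}\sim\normal(\vec{w}^{*T}\vec{x}^{(i)},1,S)$ is the observed response, and $z^{(j)}\sim\normal(\vec{w}^T\vec{x}^{(j)},1,S)$ is the fresh sample drawn by the gradient oracle. Using $\norm{\vec{x}^{(i)}}_2\le 1$ and $(p+q)^2\le 2p^2+2q^2$ we get $\Exp[\norm{\vec{v}^{(i)}}_2^2]\le 2\Exp[(y^{(i)})^2] + \tfrac{2}{n}\sum_j\Exp[(z^{(j)})^2]$. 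Writing $z^{(j)} = \vec{w}^T\vec{x}^{(j)} + (z^{(j)} - \vec{w}^T\vec{x}^{(j)})$ and applying $\norm{\vec{w}}_2\le\bar{B}$, Lemma \ref{lem:secondLemma}, and Lemma \ref{lem:rejectionSamplingLemma} (valid since $\vec{w}\in\Domain_{r^*,\bar{B}}$; for the $y^{(i)}$ term apply it to $\vec{w}^*$, using $\vec{w}^*\in\Domain_{r^*,\bar{B}}$, i.e.\ property (iv), whose proof does not depend on this theorem), one obtains $\Exp[(z^{(j)})^2]\le 2\bar{B}^2 + 4\log(1/\alpha(\vec{w},\vec{x}^{(j)})) + 8 \le 2\bar{B}^2 + O(\bar{B} + \log(1/a))$, and the same bound for $\Exp[(y^{(i)})^2]$ with $\vec{w}^*$ in place of $\vec{w}$. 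Since we may assume $\bar{B}\ge 1$, this sums to $\Exp[\norm{\vec{v}^{(i)}}_2^2] = O(\poly(1/a)\cdot\bar{B}^2)$.

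\textbf{Strong convexity \eqref{eq:thm:condition2}.} By \eqref{eq:HessianofLikelihoodFiniteSamples}, $\mathbf{H}_{-\bar{\ell}}(\vec{w}) = \tfrac{1}{n}\sum_i \Var_i\, M_i$ where $M_i := \vec{x}^{(i)}\vec{x}^{(i)T}$ and $\Var_i := \Var_{z\sim\normal(\vec{w}^T\vec{x}^{(i)},1,S)}[z]$; write also $\alpha_i := \alpha(\vec{w},\vec{x}^{(i)})$. The plan is in three steps. First, control the weighted log-inverse survival probabilities: applying the first inequality of Lemma \ref{lem:firstLemma} with $\vec{w}'=\vec{w}^*$ and summing against the PSD matrices $M_i$ gives
\[ \sum_i\log(1/\alpha_i)\,M_i \;\preceq\; 2\sum_i\log\big(1/\alpha(\vec{w}^*,\vec{x}^{(i)})\big)M_i \;+\; \sum_i\big|(\vec{w}-\vec{w}^*)^T\vec{x}^{(i)}\big|^2 M_i \;+\; 2\sum_i M_i. \]
By Assumption \ref{asp:survivalProbability} the first sum on the right is $\preceq 2\log(1/a)\sum_i M_i$; for the middle sum use $|(\vec{w}-\vec{w}^*)^T\vec{x}^{(i)}|^2\le 2(y^{(i)}-\vec{w}^T\vec{x}^{(i)})^2 + 2(y^{(i)}-\vec{w}^{*T}\vec{x}^{(i)})^2$ together with the defining constraint of $\Domain_{r^*,\bar{B}}$ applied to both $\vec{w}$ and $\vec{w}^*$, which bounds it by $4r^*\sum_i M_i$. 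Hence $\sum_i\log(1/\alpha_i)\,M_i \preceq R\sum_i M_i$ with $R := 2\log(1/a)+4r^*+2 = O(\log(1/a))$.

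Second, a matrix Markov step: set $G := \{i : \alpha_i \ge e^{-2R}\}$. Since the $M_i$ are PSD and $\log(1/\alpha_i)\ge 2R$ off $G$, we get $2R\sum_{i\notin G}M_i \preceq \sum_{i}\log(1/\alpha_i)M_i \preceq R\sum_i M_i$, so $\sum_{i\notin G}M_i \preceq \tfrac12\sum_i M_i$ and therefore $\sum_{i\in G}M_i \succeq \tfrac12\sum_i M_i$. Third, lower-bound the variance on $G$: a truncated-Gaussian variance lemma gives $\Var_{z\sim\normal(\mu,1,S)}[z]\ge c\cdot\normal(\mu,1;S)^2$ for an absolute constant $c$ and any $\mu,S$ (restrict $z$ to the complement of an interval of half-width $\Theta(\normal(\mu,1;S))$ around the truncated mean, using that the standard Gaussian density is at most $1/\sqrt{2\pi}$). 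Thus $\Var_i\ge c\,e^{-4R}$ for $i\in G$, and combining with the second step and the thickness bound $\matr{X}\succeq b^2\matr{I}$ of Assumption \ref{asp:logkAssumption},
\[ \mathbf{H}_{-\bar{\ell}}(\vec{w}) \;=\; \frac1n\sum_i\Var_i\, M_i \;\succeq\; \frac{c\,e^{-4R}}{n}\sum_{i\in G}M_i \;\succeq\; \frac{c\,e^{-4R}}{2}\,\matr{X} \;\succeq\; \frac{c\,e^{-4R}}{2}\,b^2\,\matr{I}. \]
Since $R=O(\log(1/a))$, the prefactor $c\,e^{-4R}/2$ is $\poly(a)$ times an absolute constant, which is \eqref{eq:thm:condition2}; the exact constants $e^{-16}$ and $a^{10}$ come out of a careful accounting of $R$ (i.e.\ of the losses in Lemma \ref{lem:firstLemma}, Assumption \ref{asp:survivalProbability}, and the value $r^*=4\log(2/a)+7$) and of the variance lemma's constant.

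\textbf{Main obstacle.} The crux is the strong-convexity bound, and in particular the realization that one must \emph{not} lower-bound each $\Var_i$ individually: the only uniform bound available, Lemma \ref{lem:rejectionSamplingLemma}, is $\alpha_i \ge \poly(a)e^{-O(\bar{B})}$, which would make the strong-convexity constant exponentially small in $\bar B = BC\sqrt{k}$ and ruin the final rate. The averaging argument over the ``good'' index set $G$ — feasible precisely because the $\Domain_{r^*,\bar{B}}$ constraint plus Assumption \ref{asp:survivalProbability} force $\sum_i\log(1/\alpha_i)M_i \preceq O(\log(1/a))\sum_i M_i$ with a bound independent of $\bar B$ — is what yields a constant depending only on $a$ and $b$. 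The two technical points that need care are establishing the variance-versus-survival-probability inequality $\Var\gtrsim\alpha^2$ and tracking the constant $R$ tightly; these, together with keeping all the manipulations in the Loewner order, are where the real effort goes.
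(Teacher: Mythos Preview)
Your proof is correct, and for the bounded-variance part \eqref{eq:thm:condition1} it is essentially identical to the paper's (the paper invokes an external moment bound from \cite{DaskalakisGTZ18} where you use Lemma~\ref{lem:secondLemma} directly, but the computation is the same).

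For the strong-convexity part \eqref{eq:thm:condition2}, however, you take a genuinely different route. Both arguments start from the same two ingredients: the variance lower bound $\Var_i \ge c\,\alpha_i^2$ (this is exactly the paper's Lemma~\ref{lem:fourthLemma}) and the Loewner inequality $\sum_i \log(1/\alpha_i)\,M_i \preceq R\,\sum_i M_i$ with $R=O(\log(1/a))$, obtained via Lemma~\ref{lem:firstLemma}, Assumption~\ref{asp:survivalProbability}, and membership of both $\vec w,\vec w^*$ in $\Domain_{r^*,\bar B}$. Where you diverge is in how these are combined. The paper fixes a unit direction $\vec v$, writes $\alpha_i^2=\exp(-2\log(1/\alpha_i))$, normalizes the weights $(\vec v^{T}\vec x^{(i)})^2$ to a probability distribution, and applies Jensen's inequality to pull the exponential outside; the weighted average inside is then bounded directly by $R$. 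You instead run a matrix Markov argument: from $\sum_i \log(1/\alpha_i)M_i \preceq R\sum_i M_i$ you extract a good set $G$ on which $\alpha_i\ge e^{-2R}$ and $\sum_{i\in G}M_i\succeq \tfrac12\sum_i M_i$, then bound $\Var_i$ pointwise on $G$. Your approach is arguably more elementary (no convexity trick), but Jensen is sharper: it yields a prefactor of order $e^{-2R}$, whereas the Markov step followed by squaring $\alpha_i$ gives $e^{-4R}$, so your constants are a higher power of $a$ than the stated $e^{-16}a^{10}$. Your remark that ``the exact constants \ldots\ come out of a careful accounting of $R$'' is thus a bit optimistic --- with your route one gets $\poly(a)$ but not those particular constants. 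This does not affect the downstream use in Theorem~\ref{thm:estimationTheorem}, which only needs $\poly(a)$.
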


\subsection{Feasibility of Optimal Solution} \label{sec:optimal}

  As described in the high level description of our proof in the beginning of
the section, in order to be able to use strong convexity to prove the closeness
in parameter space of our estimator, we have to prove that
$\vec{w}^* \in \Domain_{r^*, \bar{B}}$. This is also needed to prove that all
the points $\vec{w} \in \Domain_{r^*, \bar{B}}$ satisfy the Assumption
\ref{asp:survivalProbability}, which we have used to prove the bounded variance
and the strong convexity property in Section \ref{sec:statistical}. The proof
of the following Lemma can be found in the Appendix \ref{sec:thirdLemmaProof}.

\begin{lemma} \label{lem:thirdLemma}
    Let $(\vec{x}^{(1)}, y^{(1)}), \dots, (\vec{x}^{(n)}, y^{(n)})$ be $n$
  samples from the linear regression model
  \eqref{eq:truncatedLinearRegressionDefinition} with parameters $\vec{w}^*$. If
  Assumptions \ref{asp:survivalProbability} and \ref{asp:logkAssumption} hold
  and $\norm{\vec{w}^*}_2 \le \bar{B}$ then,
  $\Prob \left( \vec{w}^* \in \Domain_{r^*, \bar{B}} \right) \ge 2/3$.
\end{lemma}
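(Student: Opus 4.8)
The plan is to show that $\vec{w}^*$ satisfies the two defining constraints of $\Domain_{r^*, \bar{B}}$ with probability at least $2/3$. The norm constraint $\norm{\vec{w}^*}_2 \le \bar{B}$ is given by hypothesis, so the whole content is to verify the matrix inequality
\[
  \sum_{i=1}^n \left(y^{(i)} - \vec{w}^{*T}\vec{x}^{(i)}\right)^2 \vec{x}^{(i)}\vec{x}^{(i)T}
  \;\preceq\; r^* \sum_{i=1}^n \vec{x}^{(i)}\vec{x}^{(i)T}
\]
with probability $\ge 2/3$, where $r^* = 4\log(2/a) + 7$. Note that under the true parameters, $y^{(i)} - \vec{w}^{*T}\vec{x}^{(i)} = \eps^{(i)}$, where $\eps^{(i)} \sim \normal(0,1)$ conditioned on the event $\{\vec{w}^{*T}\vec{x}^{(i)} + \eps^{(i)} \in S\}$, i.e.\ $(\eps^{(i)})^2$ is distributed as $(z - \vec{w}^{*T}\vec{x}^{(i)})^2$ for $z \sim \normal(\vec{w}^{*T}\vec{x}^{(i)}, 1, S)$. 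So the left-hand side is $\sum_i (\eps^{(i)})^2 \vec{x}^{(i)}\vec{x}^{(i)T}$ with these truncated-Gaussian residuals, which we must dominate by $r^* \matr{X} \cdot n$ (recall $\matr{X} = \frac1n\sum_i \vec{x}^{(i)}\vec{x}^{(i)T}$).

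The approach has two ingredients: a first-moment (expectation) bound and a concentration argument. For the expectation: by Lemma \ref{lem:secondLemma}, $\Exp[(\eps^{(i)})^2] \le 2\log(1/\alpha(\vec{w}^*,\vec{x}^{(i)})) + 4$, so
\[
  \Exp\!\left[\sum_{i=1}^n (\eps^{(i)})^2 \vec{x}^{(i)}\vec{x}^{(i)T}\right]
  \;\preceq\; \sum_{i=1}^n \left(2\log\tfrac{1}{\alpha(\vec{w}^*,\vec{x}^{(i)})} + 4\right) \vec{x}^{(i)}\vec{x}^{(i)T}.
\]
Applying Assumption \ref{asp:survivalProbability} to the $\log(1/\alpha)$-weighted sum bounds the first piece by $2\log(1/a)\sum_i \vec{x}^{(i)}\vec{x}^{(i)T}$, giving an expectation bounded by $(2\log(1/a) + 4)\sum_i \vec{x}^{(i)}\vec{x}^{(i)T} \preceq (r^*/2)\sum_i \vec{x}^{(i)}\vec{x}^{(i)T}$ (with the constant $r^* = 4\log(2/a)+7$ chosen precisely to leave slack). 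Then I would pass from expectation to a high-probability statement, aiming for deviation at most another $(r^*/2)\sum_i\vec{x}^{(i)}\vec{x}^{(i)T}$ so the total stays below $r^*\sum_i\vec{x}^{(i)}\vec{x}^{(i)T}$; a Markov/matrix-Markov inequality on the PSD-ordered random matrix $\sum_i (\eps^{(i)})^2\vec{x}^{(i)}\vec{x}^{(i)T}$ (e.g.\ testing against an arbitrary unit vector $\vec{v}$ and bounding $\Prob(\sum_i (\eps^{(i)})^2 (\vec{v}^T\vec{x}^{(i)})^2 > r^* \sum_i (\vec{v}^T\vec{x}^{(i)})^2)$, then using the thickness/boundedness from Assumption \ref{asp:logkAssumption} to reduce to finitely many directions or to invoke a matrix concentration bound) should suffice to get failure probability below $1/3$.

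The main obstacle I anticipate is the concentration step: the residuals $(\eps^{(i)})^2$ are squares of truncated Gaussians, hence only sub-exponential (not sub-Gaussian), and the weights $(\vec{v}^T\vec{x}^{(i)})^2$ are heterogeneous, so a naive union bound over a net of directions $\vec{v}$ loses too much. This is exactly where Assumption \ref{asp:logkAssumption} — specifically $\matr{X} \succeq \frac{\log k}{n}\vec{x}^{(i)}\vec{x}^{(i)T}$, which prevents any single covariate from dominating a direction — should be used, together with a Bernstein-type inequality for sums of independent sub-exponential random matrices (the same tool the paper says it uses for strong convexity). I would control the variance proxy by noting $\Exp[(\eps^{(i)})^4]$ is again $\poly(\log(1/\alpha(\vec{w}^*,\vec{x}^{(i)})))$ via standard truncated-Gaussian moment bounds, combine across $i$ using independence and Assumption \ref{asp:logkAssumption} to bound the effective dimension/variance, and conclude via the matrix Bernstein inequality that the deviation from the mean in spectral norm (relative to $\matr{X}$) is $o(1)$ with probability $\ge 2/3$, which closes the argument.
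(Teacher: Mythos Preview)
Your proposal follows the paper's proof almost exactly: bound the expectation of $\sum_i (\eps^{(i)})^2\,\vec{x}^{(i)}\vec{x}^{(i)T}$ via Lemma~\ref{lem:secondLemma} plus Assumption~\ref{asp:survivalProbability}, then control the fluctuation by a matrix-Bernstein inequality using Assumption~\ref{asp:logkAssumption}. That is precisely the structure of the argument in Appendix~\ref{sec:thirdLemmaProof}.

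The one place your sketch would stall as written is the moment input to Bernstein. You say you would bound the variance proxy using $\Exp[(\eps^{(i)})^4] = \poly(\log(1/\alpha(\vec{w}^*,\vec{x}^{(i)})))$, but if the centered-moment bounds retained any dependence on the individual $\alpha^{(i)}$ the argument would break: Assumption~\ref{asp:survivalProbability} controls only $\vec{x}^{(i)}\vec{x}^{(i)T}$-weighted sums of $\log(1/\alpha^{(i)})$, not of its higher powers, and Assumption~\ref{asp:logkAssumption} says nothing about the $\alpha^{(i)}$ at all. The paper handles this with a dedicated lemma (Lemma~\ref{lem:truncatedGaussianToExponential}): for each $i$ there is a deterministic shift $q^{(i)} \le 2\log(2/\alpha^{(i)})$ such that $(\eps^{(i)})^2 - q^{(i)}$ is stochastically dominated by a single exponential random variable, \emph{uniformly in} $\alpha^{(i)}$. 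The $q^{(i)}$-part is then absorbed deterministically by Assumption~\ref{asp:survivalProbability}, and the $\alpha$-free exponential tails supply the moment growth $\Exp[Z_i^p] \preceq \tfrac{p!}{2} R^{p-2} A_i^2$ required by Theorem~\ref{thm:matrixBernstein}, with Assumption~\ref{asp:logkAssumption} giving $R,\sigma^2 = O(1/\log k)$ so that the dimension factor $k$ in Bernstein is cancelled. Once you replace your vague ``standard truncated-Gaussian moment bounds'' by this uniform stochastic-domination step, your outline becomes the paper's proof.
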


\subsection{Bounded Parameters} \label{sec:boundedParameters}

  Now we provide the necessary guarantees for the upper bound on the diameter
of $\Domain_{r^*, \bar{B}}$ and the absolute values of the coefficients $c_i$
that are needed to apply Theorem \ref{thm:mainSGDAnalysis}.

\begin{lemma} \label{lem:boundedParameters}
    The diameter of $\Domain_{r^*, \bar{B}}$ is at most $2 \bar{B}$. If we also
  assume that $\norm{\vec{x}^{(i)}}_2 \le 1$ then we have that
  \[ \abs{\Exp_{y \sim \normal(\vec{w}^{*T} \vec{x}^{(i)}, 1, S)}\left[y \right]} \le \bar{B} + \poly(\log(1/a)). \]
\end{lemma}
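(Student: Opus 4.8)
The statement has two parts. The \textbf{diameter bound} is immediate: by Definition \ref{def:regression:setaki}, every $\vec{w} \in \Domain_{r^*,\bar{B}}$ satisfies $\norm{\vec{w}}_2 \le \bar{B}$, so for any two feasible points $\vec{w}_1,\vec{w}_2$ the triangle inequality gives $\norm{\vec{w}_1 - \vec{w}_2}_2 \le \norm{\vec{w}_1}_2 + \norm{\vec{w}_2}_2 \le 2\bar{B}$. That is all there is to it.

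For the \textbf{bound on $|c_i|$}, recall $c_i = -\Exp_{y \sim \normal(\vec{w}^{*T}\vec{x}^{(i)},1,S)}[y]$, the mean of a one-dimensional Gaussian with mean $\mu_i \triangleq \vec{w}^{*T}\vec{x}^{(i)}$ and variance $1$ conditioned on landing in $S$. The plan is to write $y = \mu_i + (y - \mu_i)$, so that $\Exp[y] = \mu_i + \Exp[y - \mu_i]$, and bound the two terms separately. The first term satisfies $|\mu_i| = |\vec{w}^{*T}\vec{x}^{(i)}| \le \norm{\vec{w}^*}_2 \norm{\vec{x}^{(i)}}_2 \le \bar{B}$ by Cauchy--Schwarz and the normalization $\norm{\vec{x}^{(i)}}_2 \le 1$ together with $\norm{\vec{w}^*}_2 \le \bar{B}$. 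For the second term, the centered quantity $y - \mu_i$ is a standard Gaussian conditioned on a shifted copy of $S$, and I would control its absolute mean by its second moment: by Jensen's inequality $|\Exp[y-\mu_i]| \le \sqrt{\Exp[(y-\mu_i)^2]}$, and then invoke Lemma \ref{lem:secondLemma} with $\vec{w} = \vec{w}^*$ and $\vec{x} = \vec{x}^{(i)}$, which gives $\Exp_{y \sim \normal(\vec{w}^{*T}\vec{x}^{(i)},1,S)}[(y - \vec{w}^{*T}\vec{x}^{(i)})^2] \le 2\log(1/\alpha(\vec{w}^*,\vec{x}^{(i)})) + 4$. Hence $|\Exp[y-\mu_i]| \le \sqrt{2\log(1/\alpha(\vec{w}^*,\vec{x}^{(i)})) + 4}$.

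It remains to turn $\log(1/\alpha(\vec{w}^*,\vec{x}^{(i)}))$ into something of the claimed order $\poly(\log(1/a))$. Here I would use that $\vec{w}^* \in \Domain_{r^*,\bar{B}}$ (Lemma \ref{lem:thirdLemma}) together with Lemma \ref{lem:firstLemma} and the structure of the feasible set. Concretely, the corollary flagged at the end of Section \ref{sec:technical} states that feasible points satisfy (a variant of) Assumption \ref{asp:survivalProbability}; combined with Assumption \ref{asp:survivalProbability} itself this shows that the relevant average of $\log(1/\alpha(\vec{w}^*,\vec{x}^{(i)}))$ — and in fact, using the thickness condition $\matr{X} \succeq \tfrac{\log k}{n}\vec{x}^{(i)}\vec{x}^{(i)T}$ of Assumption \ref{asp:logkAssumption} together with $\matr{X} \succeq b^2 \matr{I}$, the individual terms — is $O(\log(1/a))$. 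Plugging this back gives $|c_i| \le \bar{B} + \sqrt{O(\log(1/a)) + 4} = \bar{B} + \poly(\log(1/a))$, which is the claimed bound. The main obstacle is this last step: extracting a \emph{per-sample} bound on $\log(1/\alpha(\vec{w}^*,\vec{x}^{(i)}))$ rather than merely an averaged one, which is exactly where Assumption \ref{asp:logkAssumption}'s comparison $\matr{X} \succeq \tfrac{\log k}{n}\vec{x}^{(i)}\vec{x}^{(i)T}$ enters — it lets one dominate a single outer product by the full (thick) average, converting the trace-style bound from Assumption \ref{asp:survivalProbability} into a coordinate-wise one.
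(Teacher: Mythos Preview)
Your approach matches the paper's: the diameter bound is immediate from Definition~\ref{def:regression:setaki}, and for $|c_i|$ both arguments split $\Exp[y] = \mu_i + \Exp[y-\mu_i]$, bound $|\mu_i| \le \bar B$ by Cauchy--Schwarz, and then control the mean shift. The only difference is that the paper directly cites Lemma~6 of \cite{DaskalakisGTZ18} to obtain $|\Exp[y]-\mu_i| \le \sqrt{\log(1/a)}$ and stops, whereas you route through Jensen plus Lemma~\ref{lem:secondLemma} to land on $\sqrt{2\log(1/\alpha(\vec{w}^*,\vec{x}^{(i)}))+4}$ --- essentially the same bound, just derived with the paper's internal tools rather than the external reference.

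Your subsequent concern about converting the per-sample $\alpha(\vec{w}^*,\vec{x}^{(i)})$ into the global constant $a$ is legitimate; the paper in fact glosses over this and simply writes $a$. However, the mechanism you sketch --- using the comparison $\matr{X}\succeq \tfrac{\log k}{n}\vec{x}^{(i)}\vec{x}^{(i)T}$ from Assumption~\ref{asp:logkAssumption} to turn the averaged bound of Assumption~\ref{asp:survivalProbability} into a per-sample one --- does not obviously produce a bound independent of $n$ and $\norm{\vec{x}^{(i)}}$. The cleaner route is to invoke Lemma~\ref{lem:rejectionSamplingLemma} at $\vec{w}=\vec{w}^*\in\Domain_{r^*,\bar B}$, which already gives $\log(1/\alpha(\vec{w}^*,\vec{x}^{(i)})) = O(\log(1/a) + \bar B)$ per sample; the resulting extra $O(\sqrt{\bar B})$ under the square root is harmless for the downstream application since $\rho$ is taken to be $\poly(1/a)\cdot\bar B$ anyway.
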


\begin{proof}
    The bound on the diameter of the set $\Domain_{r^*, \bar{B}}$ follows
  directly from its definition. For the coefficients $c_i$ we have that
  $\abs{c_i} = \abs{\Exp_{y \sim \normal(\vec{w}^{*T} \vec{x}^{(i)}, 1, S)}\left[y \right]}$
  we have from Lemma 6 of \cite{DaskalakisGTZ18} that
  \[ \abs{\Exp_{y \sim \normal(\vec{w}^{*T} \vec{x}^{(i)}, 1, S)}\left[y \right] - \vec{w}^{* T} \vec{x}^{(i)}} \le \sqrt{\log(1/a)} \]
  \noindent and hence using the assumption that $\norm{\vec{x}^{(i)}}_2 \le 1$
  we get that
  \begin{align} \label{eq:boundedParametersCoefficientBound}
    \abs{c_i} & \le \norm{\vec{w}^{*}}_2 \norm{\vec{x}^{(i)}}_2 + \sqrt{\log(1/a)} \le \bar{B} + + \sqrt{\log(1/a)}.
  \end{align}
  \noindent and the lemma follows.
\end{proof}

\subsection{Proof of Theorem \ref{thm:estimationTheorem}}
\label{sec:mainResultProof}

  In this section we analyze Algorithm \ref{fig:algorithm}, which implements
the projected stochastic gradient descent without replacement on the negative
log-likelihood landscape. Before running Algorithm \ref{fig:algorithm}, as we
explained before \eqref{eq:normalizedAssumptions} we apply a normalization so
that \eqref{eq:normalizedAssumptions}. Now we can use of the Lemmas and Theorems
that we proved in the previous sections with
$\bar{B} = B \cdot C \cdot \sqrt{k}$.

  First we observe that from Section \ref{sec:gradient} and
\ref{sec:projectionAlgorithmProof} we have a proof that we can: (a) efficiently
find one initial feasible point, (b) we can compute efficiently an unbiased
estimation of the gradient and (c) we can efficiently project to the set
$\Domain_{r^*, \bar{B}}$. These results prove that the Algorithm
\ref{fig:algorithm} that we analyze has running time that is polynomial in the
number of steps $M$ of stochastic gradient descent, and in the dimension $k$.

  It remains to upper bound the number of steps that the PSGD algorithm needs
to compute an estimate that is close in parameter space with $\vec{w}^*$. The
analysis of this estimation algorithm will be based on Theorem
\ref{thm:conditionsSatisfied} combined with the theorem about the performance
of the projected stochastic gradient descent without replacement
(Theorem \ref{thm:mainSGDAnalysis}). This is summarized in the following Lemma
whose proof follows from the lemmas that we presented in the previous sections
together with Theorem \ref{thm:mainSGDAnalysis}.

\begin{lemma} \label{lem:expectedPerformanceSGD}
    Let $\vec{w}^*$ be the underlying parameters of our model, let
  $f(\vec{w}) = \frac{1}{n} \sum_{i = 1}^n f_i(\vec{w}) = - \bar{\ell}(\vec{w})$
  where
  \begin{enumerate}
    \item[$\triangleright$] $f_i(\vec{w}) = c_i \cdot \vec{w}^T \vec{x}^{(i)} + q(\vec{w})$,
    \item[$\triangleright$] $c_i = \Exp_{y \sim \normal(\vec{w}^{*T} \vec{x}^{(i)}, 1, S)}\left[y \right]$,
    \item[$\triangleright$] $q(\vec{w}) = \frac{1}{n} \sum_{i = 1}^n \log \left( \int_S \exp \left(- \frac{1}{2} z^2 + z \cdot \vec{w}^T \vec{x}^{(i)} \right) dz \right)$.
  \end{enumerate}
  If for all $i \in [n]$, $\norm{\vec{x}^{(i)}}_2 \le 1$, then it holds that
  \[ \Exp\left[ f(\vec{\bar{w}}) \right] - f(\vec{w}^*) \le \poly \left(\frac{1}{a} \right) \cdot \frac{B^2}{b^2} \cdot \frac{1}{n} \left( 1 + \log(n) \right) \]
  where $\bar{\vec{w}}$ is the output of Algorithm \ref{alg:projectedSGD} after
  $n$ steps.
\end{lemma}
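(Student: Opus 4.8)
The plan is to instantiate Theorem~\ref{thm:mainSGDAnalysis} with $f=-\bar{\ell}$ and projection set $\Domain_{r^*,\bar{B}}$, verify its three hypotheses using the facts already established in Sections~\ref{sec:statistical}, \ref{sec:optimal} and~\ref{sec:boundedParameters}, and then read off the claimed bound after substituting the parameters $\rho$, $\lambda$ and $\bar{B}$; this lemma is really an assembly step rather than a new argument. First I would recall the decomposition \eqref{eq:logLikelihoodDecomposition}: discarding the $\vec{w}$-independent term $\tfrac12\Exp[(y^{(i)})^2]$, which changes neither $f(\bar{\vec{w}})-f(\vec{w}^*)$ nor any gradient, we have $f(\vec{w})=\tfrac1n\sum_i f_i(\vec{w})$ with $f_i(\vec{w})=c_i\,\vec{w}^T\vec{x}^{(i)}+q(\vec{w})$ and $\norm{\vec{x}^{(i)}}_2\le1$ by hypothesis, which is exactly the structural form demanded by Theorem~\ref{thm:mainSGDAnalysis}. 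Since the linear part of each $f_i$ has vanishing Hessian, $\nabla^2 f_i=\nabla^2 q=\Hessian_{-\bar{\ell}}$ (cf.\ \eqref{eq:HessianofLikelihoodFiniteSamples}), so strong convexity of $q$ coincides with strong convexity of the negative population log-likelihood.

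Next I would check that the ``$\vec{w}^*$'' of the lemma (the true parameters) is the constrained minimizer $\arg\min_{\vec{w}\in\Domain_{r^*,\bar{B}}}f(\vec{w})$ appearing in Theorem~\ref{thm:mainSGDAnalysis}. Since $\ell(\vec{w};\vec{x}^{(i)},\cdot)$ is the log-density of $\normal(\vec{w}^T\vec{x}^{(i)},1,S)$, for each $i$ we have
\[ \Exp_{y\sim\normal(\vec{w}^{*T}\vec{x}^{(i)},1,S)}\bigl[\ell(\vec{w};\vec{x}^{(i)},y)-\ell(\vec{w}^*;\vec{x}^{(i)},y)\bigr] = -\,\mathrm{KL}\bigl(\normal(\vec{w}^{*T}\vec{x}^{(i)},1,S)\,\|\,\normal(\vec{w}^T\vec{x}^{(i)},1,S)\bigr) \le 0, \]
so $\bar{\ell}$ is globally maximized, equivalently $f$ globally minimized, at $\vec{w}^*$; combined with $\vec{w}^*\in\Domain_{r^*,\bar{B}}$ from Lemma~\ref{lem:thirdLemma} this makes $\vec{w}^*$ a constrained minimizer, and uniqueness follows from the strong convexity of $f$ on $\Domain_{r^*,\bar{B}}$ established in the next step. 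Lemma~\ref{lem:thirdLemma} gives this event probability at least $2/3$, which is the source of the success probability in Theorem~\ref{thm:estimationTheorem}.

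Then I would verify the three hypotheses. For condition~(i), \emph{bounded variance}, I take $\rho^2$ to be the bound of \eqref{eq:thm:condition1}, namely $\rho^2=O(\poly(1/a)\,\bar{B}^2)$, which controls $\Exp[\norm{\vec{v}^{(i)}}_2^2]$ for the gradient estimator used by Algorithm~\ref{alg:projectedSGD}. For condition~(iii), \emph{bounded parameters}, Lemma~\ref{lem:boundedParameters} gives that the diameter of $\Domain_{r^*,\bar{B}}$ is at most $2\bar{B}$ and $\max_i\abs{c_i}\le\bar{B}+\poly(\log(1/a))$, and both are $\le\rho$ after absorbing an absolute constant into the $O(\cdot)$. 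For condition~(ii), \emph{strong convexity}, I set $\lambda=e^{-16}a^{10}/b^2$ and use \eqref{eq:thm:condition2}, which shows $\nabla^2 q(\vec{w})=\Hessian_{-\bar{\ell}}(\vec{w})\succeq\lambda\,\matr{I}$ for every $\vec{w}\in\Domain_{r^*,\bar{B}}$; this is strong convexity of $q$ on the projection set, which is all that the projected-SGD analysis needs since the iterates $\vec{w}^{(i)}$ and the minimizer $\vec{w}^*$ never leave $\Domain_{r^*,\bar{B}}$. Theorem~\ref{thm:mainSGDAnalysis} then yields $\Exp[f(\bar{\vec{w}})]-f(\vec{w}^*)\le c\,\rho^2/(\lambda n)\cdot\log n$, and substituting the above values of $\rho^2$ and $\lambda$ together with $\bar{B}=B\cdot C\cdot\sqrt{k}$ from \eqref{eq:normalizedAssumptions}, and tracking the companion rescaling of the thickness parameter $b$ induced by the normalization, collapses the right-hand side to $\poly(1/a)\cdot\tfrac{B^2}{b^2}\cdot\tfrac1n(1+\log n)$, as claimed.

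The only steps needing genuine care are the two highlighted above: identifying the true $\vec{w}^*$ with the constrained optimizer of Theorem~\ref{thm:mainSGDAnalysis}, via the KL-divergence characterization of the log-likelihood maximum plus feasibility (Lemma~\ref{lem:thirdLemma}); and the normalization bookkeeping that produces exactly the $B^2/b^2$ factor. All the substantive analytic work — the strong-convexity lower bound \eqref{eq:thm:condition2}, the variance bound \eqref{eq:thm:condition1}, and the feasibility Lemma~\ref{lem:thirdLemma} — is carried out in the earlier sections, so there is no real obstacle here: the difficulty of the paper lives in those inputs, not in this lemma.
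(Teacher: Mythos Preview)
Your proposal is correct and follows essentially the same route as the paper: verify conditions (i)--(iii) of Theorem~\ref{thm:mainSGDAnalysis} via Theorem~\ref{thm:conditionsSatisfied} and Lemma~\ref{lem:boundedParameters} (using that $\nabla^2 q=\Hessian_{-\bar\ell}$), then read off the bound. You are in fact more careful than the paper on two points it leaves implicit: the KL argument that the true $\vec{w}^*$ is the constrained minimizer once $\vec{w}^*\in\Domain_{r^*,\bar B}$, and the fact that the whole statement tacitly conditions on the probability-$2/3$ event of Lemma~\ref{lem:thirdLemma}.
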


\begin{proof}
    Due to the first point of Theorem \ref{thm:conditionsSatisfied} we have
  that the condition (i) of Theorem \ref{thm:mainSGDAnalysis} is satisfied with
  $\rho = \poly(1/a) \cdot \bar{B}$. Also it is not hard to see that
  the Hessian $\mathbf{H}_{-\bar{\ell}}$ is equal to the Hessian
  $\mathbf{H}_{q}$. Therefore from the second point of Theorem
  \ref{thm:conditionsSatisfied} we have that the condition (ii) of Theorem
  \ref{thm:mainSGDAnalysis} is satisfied with $\lambda = \poly(a) / b^2$.
  Finally from Lemma \ref{lem:boundedParameters} we have that the condition
  (iii) of Theorem \ref{thm:mainSGDAnalysis} is satisfied. Hence we can apply
  Theorem \ref{thm:mainSGDAnalysis} and the lemma follows.
\end{proof}

\begin{prevproof}{Theorem}{thm:estimationTheorem}
    Applying Markov's inequality to Lemma \ref{lem:expectedPerformanceSGD} we
  get that
  \begin{equation} \label{eq:MarkovFinalBound}
    \Prob \left( f(\vec{\bar{w}}) - f(\vec{w}^*) \ge \poly \left( \frac{1}{\delta a} \right) \cdot \frac{\bar{B}^2}{b^2} \cdot \frac{k}{n} \left( 1 + \log(n) \right) \right) \le \delta.
  \end{equation}

  \noindent Hence we can condition on the event
  $f(\vec{\bar{w}}) - f(\vec{w}^*) \le \poly \left( \frac{1}{ \delta a } \right) \cdot \frac{\bar{B}^2}{b^2} \cdot \frac{k}{n} \left( 1 + \log(n) \right)$
  and we only loose probability at most $\delta$. Now we can use Theorem
  \ref{thm:conditionsSatisfied} which establishes the $\lambda$-strong
  convexity of $f$, for $\lambda = \poly(\alpha)/b^2$, which implies that
  $f(\vec{w}) - f(\vec{w}^*) \ge \frac{\lambda}{2} \norm{\vec{w} - \vec{w}^*}_2^2$,
  to get that
  \begin{equation} \label{eq:finalProofSecondToLast}
    \norm{\bar{\vec{w}} - \vec{w}^*}_2^2 \le \poly \left(\frac{1}{a} \right) \cdot \frac{\bar{B}^2}{b^4} \cdot \frac{k}{n} \left( 1 + \log(n) \right),
  \end{equation}
  \noindent and our theorem follows if we replace
  $\bar{B} = B \cdot C \cdot \sqrt{k}$ as we explained in
  \eqref{eq:normalizedAssumptions}.
\end{prevproof}

\subsection{Full Description of the Algorithm} \label{sec:algos}
\vspace{-10pt}
  \begin{algorithm}[H]
  \caption{Projected Stochastic Gradient Descent. Given access to samples from $\normal(\vec{w}^T \vec{x},1, S)$.}
  \begin{algorithmic}[1]
    \Procedure{Sgd}{$M, \lambda$}\Comment{\textit{$M$: number of steps, $\lambda$: parameter}}
        \State Apply a random permutation of the set of samples $\{(\vec{x}^{(1)}, y^{(1)}), \ldots, (\vec{x}^{(n)}, y^{(n)}))\}$
    \State $\vec{w}^{(0)} \gets$ \Call{ProjectToDomain}{$r^*, B, \vec{0}$}
    \For{$i = 1, \dots, M$}
      \State $\eta_i \gets \frac{1}{\lambda \cdot i}$
      \State $\vec{v}^{(i)} \gets \Call{GradientEstimation}{\vec{x}^{(i)}, \vec{w}^{(i - 1)}, y^{(i)}}$
      \State $\vec{r}^{(i)} \gets \vec{w}^{(i - 1)} - \eta_i \vec{v}^{(i)}$
      \State $\vec{w}^{(i)} \gets \Call{ProjectToDomain}{r^*, B, \vec{r}^{(i)}}$
    \EndFor    \State \textbf{return} $\bar{\vec{w}} \gets \frac{1}{M} \sum_{i = 1}^M \vec{w}^{(i)}$\Comment{\textit{output the average}}
  \EndProcedure
  \end{algorithmic}
  \label{alg:projectedSGD}
  \end{algorithm}
  \vspace{-15pt}
  \begin{algorithm}[H]
  \caption{The function to estimate the gradient of the function $f_i$ as in \eqref{eq:f_iGradient}.}
  \label{alg:estimateGradient}
  \begin{algorithmic}[1]
    \Function{GradientEstimation}{$\vec{r}, \vec{w},y$}
    \State pick $\vec{x}$ at random from the set $\left\{\vec{x}^{(1)}, \dots, \vec{x}^{(n)}\right\}$
    \State Sample $z$ from $\normal(\vec{w}^T \vec{x}, 1, S)$ according to the Appendix \ref{sec:unionIntervalsSampling}
    \State \textbf{return} $y \vec{r} - z \vec{x}$
    \EndFunction
  \end{algorithmic}
  \label{alg:rejectionSampling}
  \end{algorithm}
  \vspace{-15pt}
  \begin{algorithm}[H]
  \caption{The function that projects a current guess back to the domain $\Domain_{r, B}$ (see Section \ref{sec:projectionAlgorithmProof}).}
  \label{alg:projectToDomain}
  \begin{algorithmic}[1]
    \Function{ProjectToDomain}{$r, B, \vec{w}$} \Comment{\textit{$r, B$ are the parameters of the domain $\Domain_{r, B}$}}
                    \State $\hat{\tau} \gets \arg \min_{\tau} \{\textsc{Ellipsoid($\vec{w}$, $\tau$, $r$)} \neq \text{``Empty''}\}$ \Comment{\textit{find $\tau$ via binary search}}
    \State \textbf{return} $\textsc{Ellipsoid($\vec{w}$, $\hat{\tau}$, $r$)}$
        \EndFunction
    \vspace{5pt}
    \Function{Ellipsoid}{$\vec{w}$, $\tau$, $r$, $B$} \Comment{\textit{return a point $\vec{z}$ in $\Domain_{r, B}$ with $\norm{\vec{z} - \vec{w}}_2 \le \tau$ or ``Empty''}}
    \State $\mathcal{E}_0 \gets \{\vec{z} \mid \norm{\vec{z} - \vec{w}}_2 \le \tau\}$
    \State \vspace{-24pt}
    \begin{align*}
      \textbf{return} ~~ & \text{the result of the ellipsoid method with initial ellipsoid $\mathcal{E}_0$ and $\textsc{FindSeparation}$} \\[-3pt]
      & \text{as a separation oracle}
    \end{align*}
    \vspace{-24pt}
                \EndFunction
    \vspace{5pt}
    \Function{FindSeparation}{$\vec{u}$, $r$, $B$} \Comment{\textit{find a separating hyperplane between $\vec{u}$ and $\Domain_{r, B}$}}
    \State $\matr{A} \gets \sum_{i=1}^n \left(y^{(i)} - \vec{u}^T \vec{x}^{(i)}\right)^2 \vec{x_i}^{(i)} \vec{x_i}^{(i) T}$
    \If{$\lambda_{\max}(\matr{A}) \leq r$}
      \If{$\norm{\vec{u}}_2 \le B$}
        \State \textbf{return} ``is member''
      \Else
        \State \textbf{return} a separating hyperplane between the vector
         $\vec{u}$ and the ball with radius $B$
      \EndIf
    \Else
    \State $\vec{v}_m \gets$ eigenvector of $\matr{A}$ that corresponds to the maximum eigenvalue $\lambda_{\max}(\matr{A})$
    \State $\mathcal{E} \gets \left\{ \vec{z} \in \reals^k \mid \sum_{i=1}^n \left(y^{(i)} - \vec{z}^T \vec{x}^{(i)}\right)^2 \left(\vec{v}_m^T \vec{x}^{(i)} \right)^2 \le r \right\}$
    \State \textbf{return} a separating hyperplane between the vector $\vec{u}$ and the ellipsoid $\mathcal{E}$ (See \eqref{eq:hyperplaneDefinition})
    \EndIf
    \EndFunction
  \end{algorithmic}
  \label{alg:projectionAlgorithm}
  \end{algorithm}
  \vspace{-10pt}
  \begin{figure}[H]
    \caption{Description of the Stochastic Gradient Descent (SGD) algorithm
    without replacement for estimating the parameters of a truncated linear
    regression.}
    \label{fig:algorithm}
  \end{figure}

\section{Learning 1-Layer Neural Networks with Noisy Activation Function} \label{sec:neural}

  In this section we will describe how we can use our truncated regression
algorithm to provably learn the parameters of an one layer neural network
with noisy activation functions. Noisy activation function have been explored
by \cite{NairH10}, \cite{BengioLC13} and \cite{GulcehreMDB16} as we have
discussed in the introduction. The problem of estimating the parameters of
such a neural network is a challenging problem and no theoretically rigorous
methods are known. In this section we show that this problem can be
formulated as a truncated linear regression problem which we can efficiently
solve using our Algorithm \ref{alg:projectedSGD}.

  Let $g : \reals \to \reals$ be a random map that corresponds to a noisy
rectifier linear unit, i.e. $g(x) = \max\{0, x + \eps\}$ where $\eps$ is a
standard normal random variable. Then an one layer neural network with noisy
activation functions is the multivalued function $f$ parameterized by the
vector $\vec{w} \in \reals^k$ such that
$f_{\vec{w}}(\vec{x}) = g(\vec{w}^T \vec{x})$. In the realizable, supervised
setting we observe $n$ labeled samples of the form
$\left(\vec{x}^{(i)}, y^{(i)}\right)$ and we want to estimate the parameters
$\matr{W}$ that better capture the samples we have observed. We remind that the
assumption that $\left(\vec{x}^{(i)}, y^{(i)}\right)$ is realizable means that
there exists a $\vec{w}^*$ such that for all $i$ it holds
$y^{(i)} = f_{\vec{w}^*} \left(\vec{x}^{(i)}\right) = g(\vec{w}^{* T} \vec{x})$.

  Our SGD algorithm then gives a rigorous method to estimate $\matr{W}^*$ if we
assume that the inputs $\vec{x}^{(i)}$ together with the truncation of the
activation function satisfy Assumption \ref{asp:survivalProbability} and
Assumption \ref{asp:logkAssumption}. Using Theorem \ref{thm:estimationTheorem}
we can then bound the number of samples that we need for this learning task.
These results are summarized in the following corollary which directly follows
from Theorem \ref{thm:estimationTheorem}.

\begin{corollary} \label{cor:neural}
    Let $\left(\vec{x}^{(i)}, y^{(i)}\right)$ be $n$ i.i.d. samples drawn
  according to the following distribution
  \[ y^{(i)} = f_{\vec{w}^*}\left(\vec{x}^{(i)}\right)
     = \max\{0, \vec{w}^{* T} \vec{x}^{(i)} + \eps^{(i)}\} \]
  \noindent with $\eps^{(i)} \sim \normal(0, 1)$. Assume also that $\left(\vec{x}^{(i)}, y^{(i)}\right)$ and $\vec{w}^*$ satisfy Assumption
  \ref{asp:survivalProbability} and \ref{asp:logkAssumption}, and that $\norm{\vec{x}^{(i)}}_\infty \le B$ and $\norm{\vec{w}^*}_2 \le C$. Then
  the SGD Algorithm \ref{alg:projectedSGD} outputs an estimate $\hat{\matr{w}}$
  such that
  \[ \norm{\hat{\vec{w}} - \vec{w}^*}_{2} \le \poly\left(\frac{1}{a} \right) \cdot \frac{B \cdot C}{b^2} \cdot \sqrt{\frac{k}{n} \log n} \]
  \noindent with probability at least $2/3$. Moreover if $S$ is a union of $r$
  subintervals then Algorithm \ref{alg:projectedSGD} runs in polynomial time.
\end{corollary}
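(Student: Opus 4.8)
The plan is to obtain Corollary \ref{cor:neural} as a direct instantiation of Theorem \ref{thm:estimationTheorem} with the truncation set $S = (0, +\infty)$. The first step is to notice that realizability, $y^{(i)} = \max\{0, \vec{w}^{*T}\vec{x}^{(i)} + \eps^{(i)}\}$, splits the samples into two kinds: if $\vec{w}^{*T}\vec{x}^{(i)} + \eps^{(i)} > 0$ then $y^{(i)} = \vec{w}^{*T}\vec{x}^{(i)} + \eps^{(i)}$ and the pair $(\vec{x}^{(i)}, y^{(i)})$ is exactly a sample of the model \eqref{eq:truncatedLinearRegressionDefinition} that survived truncation with respect to $S = (0,+\infty)$; otherwise $y^{(i)} = 0$ and the sample has been truncated (we are in fact in the easier censored setting, since we still see $\vec{x}^{(i)}$, but we will not use that extra information). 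I would simply discard the zero-valued responses. Conditioned on survival and on a fixed $\vec{x}^{(i)}$, the observed response is distributed as $\normal(\vec{w}^{*T}\vec{x}^{(i)}, 1, S)$, which is precisely the law prescribed by \eqref{eq:truncatedLinearRegressionDefinition}; since the draws are independent across $i$, the surviving subsample is a genuine instance of truncated linear regression with the \emph{same} unknown $\vec{w}^*$.

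The second step is to run Algorithm \ref{alg:projectedSGD} on this surviving subsample, say of size $m$. Because $S = (0,+\infty)$ is a union of $r=1$ intervals, the fast sampling routine of Appendix \ref{sec:unionIntervalsSampling} is available, so by the running-time claim of Theorem \ref{thm:estimationTheorem} the algorithm runs in polynomial time. Invoking the hypotheses of the corollary — Assumptions \ref{asp:survivalProbability} and \ref{asp:logkAssumption} (read for the subsample that actually enters the regression), together with $\norm{\vec{x}^{(i)}}_\infty \le B$ and $\norm{\vec{w}^*}_2 \le C$ — Theorem \ref{thm:estimationTheorem} gives $\norm{\hat{\vec{w}} - \vec{w}^*}_2 \le \poly(1/a)\cdot \tfrac{BC}{b^2}\cdot\sqrt{(k/m)\log m}$ with probability at least $2/3$.

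The last step is to re-express this in terms of the total sample count $n$. Here $m = \sum_{i=1}^n \chara\{y^{(i)} > 0\}$ is a sum of independent Bernoulli variables with means $\alpha(\vec{w}^*, \vec{x}^{(i)}) = \normal(\vec{w}^{*T}\vec{x}^{(i)}, 1; S)$, and Assumption \ref{asp:survivalProbability} (traced against a direction in which $\matr{X}$ is thick, using Assumption \ref{asp:logkAssumption}) forces these survival probabilities to be bounded below on average, so $\Exp[m] = \Omega_a(n)$ and, by a Chernoff bound, $m \ge \Omega_a(n)$ except with probability exponentially small in $n$. Absorbing the resulting constant into the $\poly(1/a)$ factor — and absorbing the negligible failure probability of the concentration step into a slight tightening of the constants behind the $2/3$ in Theorem \ref{thm:estimationTheorem} — yields $\norm{\hat{\vec{w}} - \vec{w}^*}_2 \le \poly(1/a)\cdot\tfrac{BC}{b^2}\cdot\sqrt{(k/n)\log n}$ with probability at least $2/3$, which is the claim.

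I do not expect a genuine obstacle: this is a reduction and all the heavy lifting lives in Theorem \ref{thm:estimationTheorem}. The only point demanding a little care is the bookkeeping of the last paragraph — verifying that throwing away the zero-valued responses changes the effective sample size by at most a constant factor, and that the two assumptions are interpreted on exactly the subsample that is fed to the algorithm. If one instead reads the corollary's hypotheses as already pertaining to the surviving samples and takes the ``$n$'' there to be their number, this step becomes vacuous and the corollary is immediate.
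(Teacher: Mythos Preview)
Your proposal is correct and takes essentially the same approach as the paper: the paper states only that the corollary ``directly follows from Theorem \ref{thm:estimationTheorem}'' and notes, exactly as you do, that the zero-valued (censored) responses are simply ignored. Your step-3 bookkeeping about $m$ versus $n$ is extra care that the paper omits; your closing remark that the corollary is immediate if the hypotheses are read on the surviving subsample is precisely the reading the paper implicitly adopts.
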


\noindent Note that the aforementioned problem is easier than the problem that
we solve in Section \ref{sec:estimation}. The reason is that in the neural
network setting even the samples $y^{(i)}$ that are filtered by the activation
function are available to us and hence we have the additional information that
we can compute their percentage. In Corollary \ref{cor:neural} we don't use at
all this information.

\section{Experiments} \label{sec:experiments}

  To validate the performance of our proposed algorithm we constructed a
synthetic dataset with various datapoints $\vec x_i \in \mathbb{R}^{10}$ that
were drawn uniformly at random from a Gaussian Distribution
$\normal(\matr{0},\matr{I})$. For each of these datapoints, we generated the
corresponding $y_i$ as $y_i = \vec w^T \vec x_i + \eps_i$, where $\eps_i$ where
drawn independently from $\normal(0,1)$ and $w$ was chosen to be the all-ones
vector $\vec 1$. We filtered the dataset to keep all samples with $y_i > 4$ and
$\vec w^T \vec x < 2$. We note that to run the projection step of our algorithm
we use the convex optimization library \texttt{cvxpy}.

Figure~\ref{fig:comparison-with-ols} shows the comparison with ordinary least squares. You can see that even though the OLS estimator quickly converges, its estimate is biased due to the data truncation. As a result, the estimates produced tend to be significantly larger in magnitude than the true $\vec  w = \vec 1$. In contrast, our proposed method is able to correct for this bias achieving an estimate that improves with the number of samples $n$ at an optimal rate of $1/\sqrt{n}$, despite the adversarial nature of the filtering that kept only significantly high values of $y$.

\begin{figure}[H]
\centering
  \includegraphics[width=0.6\textwidth]{./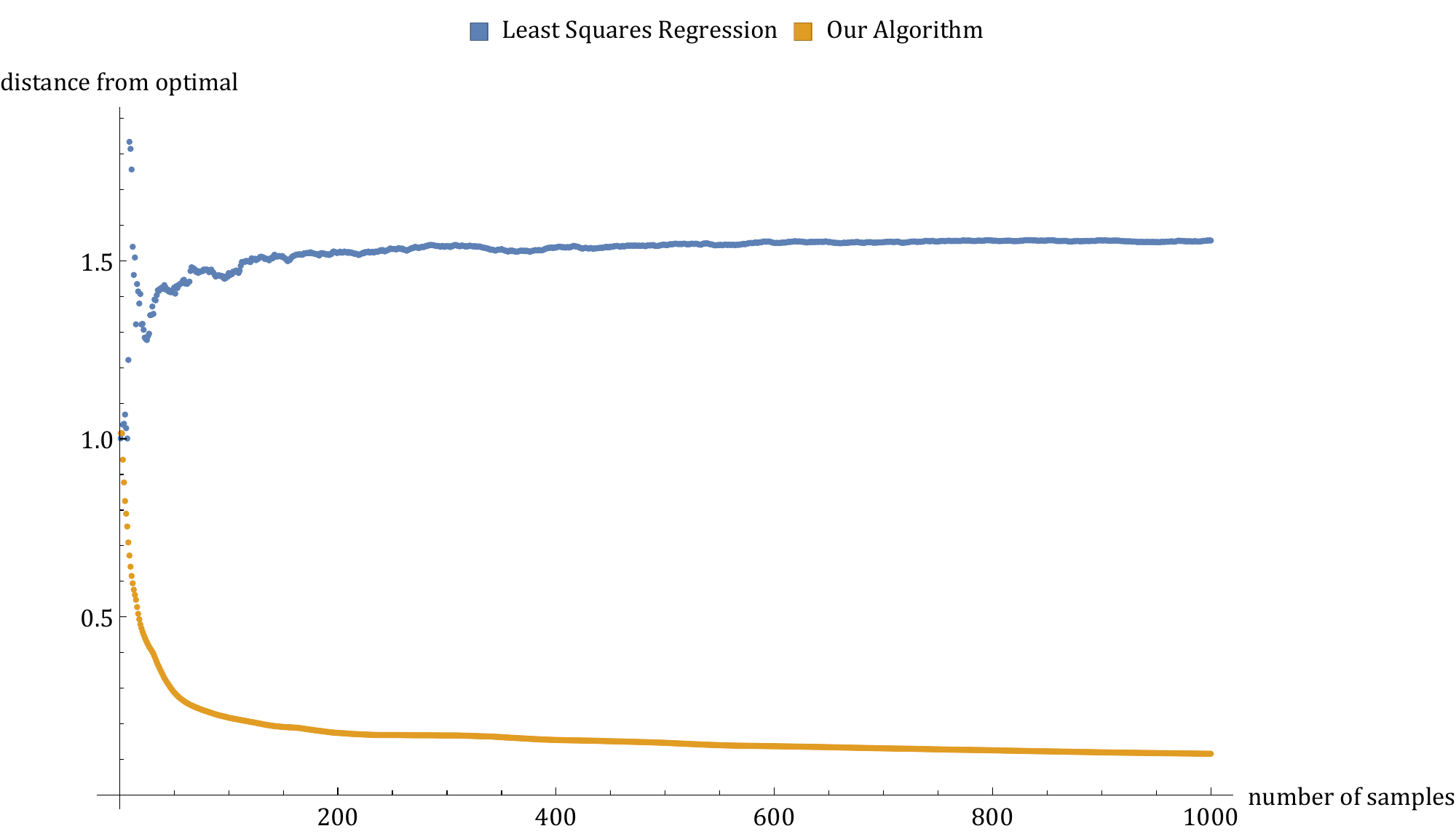}
  \caption{Comparison of the proposed method with ordinary least squares.}
  \label{fig:comparison-with-ols}
\end{figure}
 \bibliographystyle{alpha}
\bibliography{ref}

\appendix

\section{Proofs Omitted from the Main Text} \label{omitted}

  In this section we present all the formal proofs that are omitted from the
main text. We start with some concentration results and some auxiliary lemmas
that will be useful for the presentation of the missing proofs.

\subsection{Useful Concentration Results} \label{sec:app:concentration}
  The following lemma is useful in cases where one wants to show
concentration of a weighted sum of i.i.d \textit{sub-gamma} random variables.

\begin{definition}(\textsc{Sub-Gamma Random Variables})
    A random variable $x$ is called \textit{sub-gamma} random variable if it
  satisfies
  \[ \log \left( \Exp\left[ \exp(\lambda x) \right] \right) \le
       \frac{\lambda^2 v}{2(1 - c \lambda)} ~~~~ \forall \lambda \in
       \left[0, \nicefrac{1}{c}\right] \]
  for some positive constants $v, c$. We call $\Gamma_+(v, c)$ the set of all
  sub-gamma random variables.
\end{definition}

\begin{theorem}[Section 2.4 of \cite{BoucheronLM13}.] \label{thm:gammaConcentrationLemma}
    Let $x^{(1)}$, \dots, $x^{(n)}$ be i.i.d. random variables such that
  $x^{(i)} \in \Gamma_+(v, c)$, $\vec{a} \in \reals_+^d$. Then,
  the following inequalities hold for any $t \in \reals_+$.
  \[ \Prob \left( \sum_{i = 1}^d a_i \left(x^{(i)} - \Exp\left[x^{(i)}\right]\right)
       \ge \norm{\vec{a}}_2 \sqrt{2 v t} + \norm{\vec{a}}_{\infty} c t \right)
       \le \exp(-t), \]
  \[ \Prob \left( - \sum_{i = 1}^d a_i \left(x^{(i)} - \Exp\left[x^{(i)}\right]\right)
       \ge \norm{\vec{a}}_2 \sqrt{2 v t} + \norm{\vec{a}}_{\infty} c t \right)
       \le \exp(-t). \]
\end{theorem}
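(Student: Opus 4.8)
The plan is to use the Cram\'er--Chernoff method, in two stages: first tensorize the moment generating functions to reduce the weighted sum to a single sub-gamma variable, and then apply the standard one-dimensional sub-gamma tail bound. Write $y^{(i)} = x^{(i)} - \Exp[x^{(i)}]$, so that (by the two-sided convention in the definition of $\Gamma_+(v,c)$) each $y^{(i)}$ has $\log \Exp[\exp(\lambda y^{(i)})] \le \tfrac{\lambda^2 v}{2(1-c\lambda)}$ for $\lambda \in [0,1/c)$. By independence, for every $\lambda \ge 0$ with $\lambda a_i < 1/c$ for all $i$,
\[ \log \Exp\!\left[\exp\!\Big(\lambda \textstyle\sum_i a_i y^{(i)}\Big)\right] = \sum_i \log \Exp[\exp(\lambda a_i y^{(i)})] \le \sum_i \frac{\lambda^2 a_i^2 v}{2(1 - c\lambda a_i)}. \]
Since the $a_i \ge 0$, we have $1 - c\lambda a_i \ge 1 - c\lambda \norm{\vec a}_\infty$, so the right-hand side is at most $\tfrac{\lambda^2 \norm{\vec a}_2^2 v}{2(1 - c\lambda \norm{\vec a}_\infty)}$, valid for $\lambda \in [0, 1/(c\norm{\vec a}_\infty))$. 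That is, $Z := \sum_i a_i y^{(i)}$ is itself sub-gamma: $Z \in \Gamma_+\!\big(\norm{\vec a}_2^2 v,\ c\norm{\vec a}_\infty\big)$.

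Next I would invoke (or re-derive) the standard single-variable fact: if $Z \in \Gamma_+(\tilde v, \tilde c)$ then $\Prob\big(Z \ge \sqrt{2\tilde v t} + \tilde c t\big) \le e^{-t}$ for all $t \ge 0$. This is Markov applied to $e^{\lambda Z}$, which gives $\Prob(Z \ge s) \le \exp(-\psi^*(s))$ with $\psi^*(s) = \sup_{\lambda \in [0,1/\tilde c)}\big(\lambda s - \tfrac{\lambda^2 \tilde v}{2(1-\tilde c\lambda)}\big) = \tfrac{\tilde v}{\tilde c^2} h(\tilde c s/\tilde v)$, $h(u) = 1 + u - \sqrt{1+2u}$; one then checks the elementary inequality $\psi^*\big(\sqrt{2\tilde v t} + \tilde c t\big) \ge t$ (equivalently, that the generalized inverse of $\psi^*$ is dominated by $t \mapsto \sqrt{2\tilde v t} + \tilde c t$). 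Substituting $\tilde v = \norm{\vec a}_2^2 v$ and $\tilde c = c\norm{\vec a}_\infty$ yields $\sqrt{2\tilde v t} + \tilde c t = \norm{\vec a}_2 \sqrt{2vt} + \norm{\vec a}_\infty c t$, which is exactly the first claimed inequality.

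The second (lower-tail) inequality follows by applying the identical argument to the variables $-x^{(i)}$, which lie in $\Gamma_+(v,c)$ with the same parameters. The only point requiring any actual computation is the one-variable calculus fact $\psi^*\big(\sqrt{2\tilde v t} + \tilde c t\big) \ge t$ — this is precisely where the $\sqrt{2vt} + ct$ shape of the deviation comes from; it is routine, and one can sidestep computing $\psi^*$ explicitly by instead plugging a near-optimal $\lambda$ (e.g.\ $\lambda = \sqrt{t/(2\tilde v)}$ capped at $1/(2\tilde c)$) into the Chernoff bound and treating the two regimes $\tilde c\sqrt{t} \le \sqrt{2\tilde v}$ and $\tilde c\sqrt{t} > \sqrt{2\tilde v}$ separately. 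Everything else is tensorization of MGFs plus the observation that nonnegativity of $\vec a$ lets us replace each $a_i$ by $\norm{\vec a}_\infty$ in the denominators.
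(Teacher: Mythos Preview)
Your argument is correct and is essentially the standard proof from \cite{BoucheronLM13}, Section~2.4: tensorize the log-MGFs using independence and the nonnegativity of the weights to show that $\sum_i a_i (x^{(i)}-\Exp[x^{(i)}])$ is itself sub-gamma with parameters $(\norm{\vec a}_2^2 v,\ c\norm{\vec a}_\infty)$, and then invoke the one-dimensional sub-gamma tail inequality. Note, however, that the paper does not supply a proof of this theorem at all --- it is quoted as a black-box concentration result from \cite{BoucheronLM13} --- so there is nothing to compare against beyond observing that your proof is the textbook one.

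One small caveat: the paper's Definition of $\Gamma_+(v,c)$ only controls the MGF for $\lambda \in [0,1/c]$, i.e.\ the right tail. Your remark that the $-x^{(i)}$ ``lie in $\Gamma_+(v,c)$ with the same parameters'' is an extra hypothesis, not a consequence of the stated definition; the paper's theorem evidently intends a two-sided sub-gamma assumption (as in \cite{BoucheronLM13}), so this is a harmless convention issue rather than a gap in your reasoning.
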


\noindent We also need a matrix concentration inequality analog to the
Bernstein inequality for real valued random variables. For a proof of this
inequality we refer to Section 6.2 of \cite{Tropp12}.

\begin{theorem}[Theorem 6.2 of \cite{Tropp12}] \label{thm:matrixBernstein}
    Consider a finite sequence $\{ \matr{Z}_i \}$ of independent, random,
  self-adjoint matrices with dimension $k$. Assume that
  \[ \Exp\left[\matr{Z}_i\right] = \matr{0}
      \quad\text{and}\quad
      \Exp\left[\matr{Z}_i^p\right] \preceq \frac{p!}{2} \cdot R^{p-2}
                \matr{A}_i^2
      \quad\text{for $p = 2, 3, 4, \dots$.} \]
  \noindent Compute the variance parameter
  \[ \sigma^2 := \norm{ \sum_i \matr{A}_i^2 }. \]
  Then the following chain of inequalities holds for all $t \geq 0$.
  \begin{align*}
    \Prob\left( \lambda_{\max}\left( \sum_i \matr{Z}_i \right) \geq t \right)
      & \leq k \cdot \exp \left( \frac{-t^2/2}{\sigma^2 + R t} \right)
        \tag{i} \\
      & \leq \begin{cases}
    	         k \cdot \exp( -t^2/4\sigma^2 ) &
                  \text{for $t \leq \sigma^2 / R$;} \\
    	         k \cdot \exp( -t/4R ) & \text{for $t \geq \sigma^2 / R$.}
             \end{cases} \tag{ii}
  \end{align*}
\end{theorem}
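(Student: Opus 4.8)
The plan is to use the matrix Laplace transform method of Ahlswede--Winter--Tropp. The first step is a Markov-type reduction: for any $\theta > 0$, since $\lambda_{\max}(\exp(\theta\matr{M})) = \exp(\theta\lambda_{\max}(\matr{M})) \le \mathrm{tr}\exp(\theta\matr{M})$ for self-adjoint $\matr{M}$, we get
\[
  \Prob\Big(\lambda_{\max}\Big(\textstyle\sum_i \matr{Z}_i\Big) \ge t\Big)
    \le e^{-\theta t}\, \Exp\Big[\mathrm{tr}\exp\Big(\theta\textstyle\sum_i \matr{Z}_i\Big)\Big].
\]
The crucial second step is to decouple the summands inside the trace exponential. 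Using Lieb's concavity theorem --- the map $\matr{A}\mapsto \mathrm{tr}\exp(\matr{H}+\log\matr{A})$ is concave on positive-definite matrices --- together with Jensen's inequality and the independence of the $\matr{Z}_i$, one peels off one summand at a time to obtain the subadditivity bound
\[
  \Exp\Big[\mathrm{tr}\exp\Big(\theta\textstyle\sum_i \matr{Z}_i\Big)\Big]
    \le \mathrm{tr}\exp\Big(\textstyle\sum_i \log\Exp\big[e^{\theta\matr{Z}_i}\big]\Big).
\]

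Next I would bound each matrix moment generating function using the hypothesis. Since $\Exp[\matr{Z}_i] = \matr{0}$ and $\Exp[\matr{Z}_i^p] \preceq \tfrac{p!}{2}R^{p-2}\matr{A}_i^2$, expanding the exponential series and summing the resulting geometric series gives, for every $\theta\in(0,1/R)$,
\[
  \Exp\big[e^{\theta\matr{Z}_i}\big]
    \preceq \matr{I} + \sum_{p\ge 2}\frac{\theta^p}{p!}\Exp[\matr{Z}_i^p]
    \preceq \matr{I} + \frac{\theta^2}{2(1-\theta R)}\matr{A}_i^2
    \preceq \exp\!\Big(\frac{\theta^2}{2(1-\theta R)}\matr{A}_i^2\Big),
\]
using $\matr{I}+\matr{M}\preceq e^{\matr{M}}$ in the last step; by operator monotonicity of $\log$ this yields $\log\Exp[e^{\theta\matr{Z}_i}] \preceq \tfrac{\theta^2}{2(1-\theta R)}\matr{A}_i^2$. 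Plugging this into the subadditivity bound, then using $\mathrm{tr}\exp(\matr{M}) \le k\,e^{\lambda_{\max}(\matr{M})}$ and $\lambda_{\max}(\sum_i \matr{A}_i^2) = \sigma^2$, gives
\[
  \Prob\Big(\lambda_{\max}\Big(\textstyle\sum_i \matr{Z}_i\Big) \ge t\Big)
    \le k\cdot \exp\!\Big(-\theta t + \frac{\theta^2\sigma^2}{2(1-\theta R)}\Big),
    \qquad 0 < \theta < 1/R.
\]

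To finish, I would optimize over $\theta$ in Bernstein's usual way: the choice $\theta = t/(\sigma^2 + Rt)$, which lies in $(0,1/R)$, makes the exponent exactly $-t^2/\big(2(\sigma^2+Rt)\big)$, proving inequality (i). Inequality (ii) then follows from a trivial case split: if $t\le \sigma^2/R$ then $\sigma^2+Rt\le 2\sigma^2$, while if $t\ge\sigma^2/R$ then $\sigma^2+Rt\le 2Rt$. The one genuinely nontrivial ingredient --- and the step I expect to be the main obstacle --- is the subadditivity of the matrix cumulant generating function: unlike in the scalar case, $\Exp\,\mathrm{tr}\,e^{\theta\sum_i \matr{Z}_i}$ does not factor over independent summands because the matrix exponential is not multiplicative, and Lieb's concavity theorem is precisely what salvages the argument. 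Everything else --- the series expansion, the operator-monotone manipulations, the trace-to-eigenvalue bound, and the scalar optimization of $\theta$ --- is routine.
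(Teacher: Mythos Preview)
The paper does not actually prove this theorem; it is quoted verbatim from \cite{Tropp12} with the remark ``For a proof of this inequality we refer to Section 6.2 of \cite{Tropp12}.'' Your proposal is precisely Tropp's argument --- matrix Laplace transform, Lieb's concavity to obtain subadditivity of matrix cumulants, a Bernstein-type series bound on each $\Exp[e^{\theta\matr{Z}_i}]$, and the standard optimization $\theta = t/(\sigma^2+Rt)$ --- so it is correct and coincides with the proof the paper defers to.
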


\subsection{Auxiliary Lemmas} \label{sec:app:aux}

\noindent The following lemma will be useful in the proof of lemma \ref{lem:thirdLemma}.

\begin{lemma} \label{lem:truncatedGaussianToExponential}
    Let $z$ be a random variable that follows a truncated Gaussian distribution
  $\normal(0, 1, S)$ with survival probability $a$ then there exists a real
  value $q > 0$ such that
  \begin{Enumerate}
    \item $q \le 2 \log\left( \nicefrac{2}{a} \right)$,
    \item the random variable $z^2 - q$ is stochastically dominated by a
          sub-gamma random variable $u \in \Gamma_+\left(1, 2\right)$ with
          $\Exp[u] = \nicefrac{1}{2}$.
  \end{Enumerate}
\end{lemma}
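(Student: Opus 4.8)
The plan is to place $q$ just below the second moment of $z$, to show that this choice automatically satisfies $q\le 2\log(2/a)$ via a \emph{sharp} bound on the second moment of a mean-zero truncated Gaussian, and then to obtain $u$ by clipping $z^2-q$ from below at the unique level that makes its mean equal to $\tfrac12$; the last step is to verify the sub-gamma estimate for the clipped variable.

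\emph{A sharp second-moment bound.} First I would prove $\Exp_{z\sim\normal(0,1,S)}[z^2]\le 2\log(1/a)+1\ (\le 2\log(2/a))$. By a rearrangement (``bathtub'') argument, among all measurable $S\subseteq\reals$ of Gaussian mass $a$ the ratio $\Exp[z^2]=\bigl(\int_S x^2\,d\normal(0,1)\bigr)/a$ is maximized by the symmetric tail $S=\{|x|\ge c_a\}$, where $2\bar\Phi(c_a)=a$; for that set $\Exp[z^2]=c_aR(c_a)+1$ with $R(c)=\phi(c)/\bar\Phi(c)$ (integration by parts). Set $G(c):=-2\log\bar\Phi(c)-cR(c)-\tfrac12$. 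Using $R'(c)=R(c)(R(c)-c)$ and the elementary Mills inequality $0<R(c)-c<1/c$, one gets $G'(c)=R(c)\bigl(1-c(R(c)-c)\bigr)>0$, so $G(c)\ge G(0)=2\log2-\tfrac12>0$; since $\bar\Phi(c_a)=a/2$ this rearranges to $c_aR(c_a)\le 2\log(1/a)$, i.e.\ $\Exp[z^2]\le 2\log(1/a)+1$.

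\emph{Choice of $q$ and the dominating variable.} Put $q:=\max\{\Exp[z^2]-\tfrac12,\ \tfrac12\}$. Then $q>0$; $q\le 2\log(2/a)$ by the previous step (using $\tfrac12\le 2\log2\le 2\log(2/a)$ and $\Exp[z^2]-\tfrac12\le 2\log(1/a)+\tfrac12<2\log(2/a)$), which is item~1; and $\Exp[z^2-q]\le\tfrac12$. Now define $u:=\max\{z^2-q,\ m\}$, where $m\in\reals$ is chosen so that $\Exp[u]=\tfrac12$; such an $m$ exists because $m\mapsto\Exp[\max\{z^2-q,m\}]$ is continuous and nondecreasing, equals $\Exp[z^2-q]\le\tfrac12$ as $m\to-\infty$, and diverges to $+\infty$ as $m\to+\infty$. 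Since $u\ge z^2-q$ pointwise, $u$ stochastically dominates $z^2-q$, and by construction $\Exp[u]=\tfrac12$.

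\emph{The sub-gamma estimate — the main obstacle.} What remains, and is the crux, is the bound on the centered moment generating function: for all $\lambda\in[0,\tfrac12)$,
\[
\Exp\bigl[e^{\lambda(u-1/2)}\bigr]\le\exp\!\Bigl(\tfrac{\lambda^2}{2(1-2\lambda)}\Bigr).
\]
The clipping is essential: the raw variable $z^2-q$ may have variance bounded away from $1$ — take $S=\{|x|\ge c\}$, where $z^2$ is $c^2$ plus an $\mathrm{Exp}(1/2)$ variable and $z^2-q$ behaves like a shifted exponential — so the domination genuinely has to pass through $u$. I would estimate $\Exp[e^{\lambda u}]$ via $e^{\lambda\max\{x,m\}}\le e^{\lambda x}+e^{\lambda m}$ and the exact identity $\Exp[e^{\lambda z^2}]=(1-2\lambda)^{-1/2}\,\normal\!\bigl(0,\tfrac1{1-2\lambda};S\bigr)/a$, controlling the ratio of Gaussian measures at variances $\tfrac1{1-2\lambda}$ and $1$ through the Mills bounds and $2\bar\Phi(x)\le e^{-x^2/2}$, and using that $\Exp[u]=\tfrac12$ pins $m$ near $\tfrac12$, so that the mass of $z^2-q$ below $\tfrac12$ collapses to a point while the surviving upper tail is sub-exponential with rate exactly $\tfrac12$ (matching the scale parameter $c=2$). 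This reduces the inequality to a one-dimensional calculus check. I expect this MGF computation to be the hard part; the variance factor that emerges is an absolute constant, and only an $O(1)$ factor (rather than literally $v=1$) is what the downstream matrix-concentration argument in Lemma~\ref{lem:thirdLemma} actually needs.
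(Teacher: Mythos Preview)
Your proposal leaves the decisive step unfinished: you never establish the sub-gamma bound for your clipped variable $u=\max\{z^2-q,m\}$, and you concede as much (``I expect this MGF computation to be the hard part''; ``only an $O(1)$ factor \dots\ is what the downstream argument actually needs''). As written, the argument does not prove the lemma. Worse, your $u$ depends on $S$ through both $q$ and $m$, so the MGF inequality you would have to check is a family of inequalities indexed by $S$, and your sketch via $e^{\lambda\max\{x,m\}}\le e^{\lambda x}+e^{\lambda m}$ together with the identity for $\Exp[e^{\lambda z^2}]$ does not obviously close uniformly in $S$.

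The paper avoids this difficulty by pushing your rearrangement idea one step further. You use the bathtub/rearrangement argument only to bound the \emph{expectation} $\Exp[z^2]$; the paper uses it to get \emph{stochastic domination}: among all $S$ of Gaussian mass $a$, the law of $z^2$ under $\normal(0,1,S)$ is stochastically largest when $S=S^\ast=\{|x|\ge\sqrt{q}\}$ with $2\bar\Phi(\sqrt{q})=a$ (this is a direct CDF comparison, since $S^\ast\cap\{z^2\ge t\}$ always carries at least as much mass as $S\cap\{z^2\ge t\}$). This fixes $q$ as the threshold of the extremal set --- independent of the particular $S$ --- and the bound $q\le 2\log(2/a)$ is immediate from $a/2=\bar\Phi(\sqrt{q})\le e^{-q/2}$. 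For $S^\ast$ the density of $z^2-q$ is explicit, namely $t\mapsto a^{-1}(2\pi(t+q))^{-1/2}e^{-(t+q)/2}$ on $[0,\infty)$, and a single-crossing comparison shows it is stochastically dominated by the fixed exponential density $t\mapsto\tfrac12 e^{-t/2}$. Thus $u$ is a \emph{single} exponential variable, the same for every $S$, and its sub-gamma property is a one-line textbook computation rather than a uniform-in-$S$ calculus problem. In short: take $q$ to be the extremal threshold rather than (a shift of) the second moment, and the clipping construction and the hard MGF step both disappear.
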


\begin{proof}
    We will prove that the random variable $z$ stochastically dominated by an
  exponential random variable. First observe that the distribution of $z^2$
  for different sets $S$ is stochastically dominated from the distribution pf
  $z^2$ when $S = S^* = S_q = \{ z \mid z^2 \ge q \}$, where q is chosen such
  that $\normal(0, 1; S^*) = a$. To prove this let $F_S$ be the cumulative
  distribution function of $z^2$ when the truncation set is $S$, we have that
  $\normal(0, 1; S) = \normal(0, 1; S^*) = a$ and hence
  \[ \Prob_{z \sim \normal(0, 1, S)} \left( z^2 \ge t \right) = \frac{1}{a}
     \normal(0, 1; S \cap S_t), \]
  \[ \Prob_{z \sim \normal(0, 1, S^*)} \left( z^2 \ge t \right) = \frac{1}{a}
     \normal(0, 1; S^* \cap S_t). \]
  \noindent We now prove that
  $\normal(0, 1; S \cap S_t) \le \normal(0, 1; S^* \cap S_t)$. If $t \ge q$
  then $S^* \cap S_t = S_t$ and hence $S \cap S_t \subseteq S^* \cap S_t$
  which implies
  $\normal(0, 1; S \cap S_t) \le \normal(0, 1; S^* \cap S_t)$. If $t \le q$
  then $S^* \cap S_t = S^*$ and hence
  \[ \normal(0, 1; S^* \cap S_t) = \normal(0, 1; S^*) = \normal(0, 1; S) \ge
     \normal(0, 1; S \cap S_t). \]
  \noindent Therefore
  $\normal(0, 1; S \cap S_t) \le \normal(0, 1; S^* \cap S_t)$ and this implies
  $F_S(t) \ge F_{S^*}(t)$, which implies that the distribution of $z^2$ for
  different sets $S$ is stochastically dominated from the distribution pf
  $z^2$ when $S = S^*$. Hence we can focus on the distribution of $z^2$ when
  $z \sim \normal(0, 1, S^*)$. First we have to get an upper bound on $q$. To
  do so we consider the $Q$-function of the standard normal distribution and
  we have that $a = \normal(0, 1; S^*) = 2 Q(\sqrt{q})$. But by Chernoff bound
  we have that $Q(\sqrt{q}) \le \exp\left( - \nicefrac{q}{2} \right)$ which implies
  \[ q \le 2 \log \left( \frac{2}{a} \right). \]
  \noindent Let $F_z$ the cumulative density function of $z$ and $F_{z^2}$
  the cumulative density function of $z^2$, we have that
  \[ F_{z^2}(t) = F_z(\sqrt{t}) - F_z(- \sqrt{t}), \]
  \noindent but we know that
  \[ F_z(t) = \frac{1}{a}
                \begin{cases}
                  \Phi(t)             & ~~~~~ t \le - \sqrt{q} \\
                  \Phi(- \sqrt{q})             & ~~~~~ - \sqrt{q} < t < \sqrt{q} \\
                  \Phi(t) - (\Phi(\sqrt{q}) - \Phi(-\sqrt{q}))  & ~~~~~ t \ge \sqrt{q}
                \end{cases} \]
  \noindent Hence we have that
  \[ F_{z^2}(t) = \frac{1}{a}
                \begin{cases}
                  0                                        & ~~~~~ t < q \\
                  \Phi(\sqrt{t}) - \Phi(-\sqrt{t}) - (\Phi(\sqrt{q}) - \Phi(-\sqrt{q})) & ~~~~~ t \ge q
                \end{cases} \]
\noindent But we know that $\Phi(\sqrt{t}) - \Phi(-\sqrt{t})$ is the
cumulative density function of the square of a Gaussian distribution, namely
is a gamma distribution with both parameters equal to $\nicefrac{1}{2}$. This
means that
$\Phi(\sqrt{t}) - \Phi(-\sqrt{t}) = \int_{0}^t
 \frac{\exp\left( -\nicefrac{\tau}{2} \right)}{\sqrt{2 \pi \tau}} d \tau$
and hence we get
\[ F_{z^2}(t) = \frac{1}{a}
              \begin{cases}
                0                                        & ~~~~~ t < q \\
                \int_{q}^t
                 \frac{\exp\left( -\nicefrac{\tau}{2} \right)}{\sqrt{2 \pi \tau}} d \tau & ~~~~~ t \ge q
              \end{cases} \]
\noindent If we define the random variable $v = z^2 - q$ then the cumulative
density function $F_v$ of $v$ is equal to
\[ F_{v}(t) = \frac{1}{a} \int_{0}^t
 \frac{\exp\left( -\nicefrac{\tau + q}{2} \right)}{\sqrt{2 \pi (\tau + q)}}
 d \tau \]
\noindent which implies that the probability density function $f_v$ of $v$ is
equal to
\[ f_v(t) = \frac{1}{a} \frac{\exp\left( -\nicefrac{(t + q)}{2} \right)}
   {\sqrt{2 \pi (t + q)}} d \tau. \]
\noindent It is easy to see that the density of $f_v$ is stochastically
dominated from the density of the exponential distribution
$g(t) = \frac{1}{2} \exp\left(- \frac{x}{2}\right)$. The reason is that
$f_v(t)$ and $g(t)$ are single crossing and $g(t)$ dominates when
$t \to \infty$. Then it is easy to see that for the cumulative it holds that
$G(t) \le F_v(t)$. Hence $G(t)$ stochastically dominates $F_v(t)$. Finally we
have that $G(t)$ is a sub-gamma $\Gamma_+(1, 2)$ and hence $v$ is also
sub-gamma $\Gamma_+(1, 2)$ and the claim follows.
\end{proof}

The following lemma lower bounds the variance of $z \sim \normal(\vec{w}^T \vec{x}, 1, S)$, and will be useful for showing strong convexity of the log likelihood for all values of the parameters in the projection set.

\begin{lemma} \label{lem:fourthLemma}
    Let $\vec{x} \in \reals^k$, $\vec{w} \in \reals^{k}$, and
  $z \sim \normal(\vec{w}^T \vec{x}, 1, S)$ then
  \[ \Exp \left[ \left( z - \Exp\left[ z \right] \right)^2 \right]
      \ge \frac{\alpha(\vec{w}, \vec{x})^2}{12}.  \]
\end{lemma}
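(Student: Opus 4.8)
The plan is to derive this lemma from a one-line anti-concentration argument exploiting that the Gaussian density is bounded. First I would set $\mu = \vec{w}^T \vec{x}$ and $\alpha = \alpha(\vec{w},\vec{x}) = \normal(\mu,1;S)$, and record that $z \sim \normal(\mu,1,S)$ has density $f_z(t) = \tfrac{1}{\alpha}\,\normal(\mu,1;t)\,\chara\{t\in S\}$. Since the standard Gaussian density is everywhere at most $\tfrac{1}{\sqrt{2\pi}}$, this gives the uniform bound $f_z(t) \le \tfrac{1}{\alpha\sqrt{2\pi}}$, and hence for every interval $J\subseteq\reals$ of length $\ell$ we have $\Prob(z\in J) \le \tfrac{\ell}{\alpha\sqrt{2\pi}}$. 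In words: because the untruncated density is bounded and $S$ carries mass $\alpha$, the law of $z$ cannot concentrate on any short interval.

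Next I would apply this to the open interval of length $\alpha$ centred at the mean $m \triangleq \Exp[z]$, obtaining $\Prob\bigl(|z-m| < \tfrac{\alpha}{2}\bigr) \le \tfrac{1}{\sqrt{2\pi}} < \tfrac12$, so that the complementary event satisfies $\Prob\bigl(|z-m| \ge \tfrac{\alpha}{2}\bigr) \ge 1-\tfrac{1}{\sqrt{2\pi}} > \tfrac12$. Then
\[
  \Exp\bigl[(z-\Exp[z])^2\bigr] \;\ge\; \Bigl(\tfrac{\alpha}{2}\Bigr)^2 \cdot \Prob\Bigl(|z-m| \ge \tfrac{\alpha}{2}\Bigr) \;\ge\; \frac{\alpha^2}{4}\cdot\frac12 \;=\; \frac{\alpha^2}{8} \;\ge\; \frac{\alpha^2}{12},
\]
which is the claimed bound (with room to spare).

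There is no real obstacle here; the argument is elementary and the only point requiring care is that one uses the correct (upper) bound on the Gaussian density. I would also remark that a sharper constant is available: among all densities bounded by $M := \sup_t f_z(t)$ the variance is minimized by the uniform density on an interval of width $1/M$, whose variance is $1/(12M^2)$, and plugging $M \le \tfrac{1}{\alpha\sqrt{2\pi}}$ yields $\Var[z] \ge \tfrac{\pi\alpha^2}{6}$; but since only $\alpha^2/12$ is needed downstream (in the proof of Theorem~\ref{thm:conditionsSatisfied}), the short anti-concentration proof above suffices.
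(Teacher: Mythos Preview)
Your argument is correct and rests on the same observation as the paper's proof: the truncated density $f_z$ is uniformly bounded (you use $f_z\le 1/(\alpha\sqrt{2\pi})$, the paper the cruder $f_z\le 1/\alpha$), which forbids concentration on short intervals and thus forces a variance lower bound. The paper finishes exactly via the route you sketch in your closing remark---arguing that the variance is minimized by the uniform law on an interval of width $\alpha$, giving $\alpha^2/12$---whereas your primary derivation replaces that comparison by a direct Markov-type tail estimate and lands at $\alpha^2/8$. Both executions are one-liners; your Markov version is arguably more self-contained (no optimality-of-uniform claim to justify), while the uniform comparison explains where the constant $12$ in the statement comes from.
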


\begin{proof}  We want to bound the following expectation:
  $\lambda_1 = \Exp_{\vec{x}\sim \normal(\vec{0},\matr{I},S)}[(x_1 - \mu_{S,1})^2]= \Var_{\vec{x}\sim \normal(\vec{0},\matr{I},S)}[g_1] $, where $g_1$ denotes the marginal distribution of $g$ along the direction of $\vec{e_1}$. Since
  $\normal(\vec{0},\matr{I};S) = \alpha$, the worst case set (i.e the one that minimizes $\Var[g_1]$) is the one that has $\alpha$ mass as close as possible to the hyperplane $x_1 = \mu_{S,1}$. However, the maximum mass that a gaussian places at the set $\{x_1:|x_1-\mu_{S,1}|<c\}$ is at most $2c$
  as the density of the univariate gaussian is at most $1$. Thus the $\Exp_{\vec{x}\sim \normal(\vec{0},\matr{I},S)}[(x_1 - \mu_{S,1})^2]$ is at least
  the variance of the uniform distribution $U[-\alpha/2,\alpha/2]$ which
  is $\alpha^2/12$. Thus $\lambda_i \ge \lambda_1 \ge \alpha^2/12$.
\end{proof}

\subsection{Proof of Lemma \ref{lem:firstLemma}} \label{sec:firstLemmaProof}

    We have that
  \begin{align*}
    \alpha(\vec{w}, \vec{x})
      & = \Exp_{y \sim \normal(\vec{w}^T \vec{x}, \matr{I})}[ \chara_{y \in S} ] \\
      & = \Exp_{y \sim \normal(\vec{w}'^T \vec{x}, \matr{I})}[ \chara_{y \in S} \exp(\norm{ y - \vec{w}'^T x }^2/2 - \norm{ y - \vec{w}^T x }^2/2 ) ] \\
      & = \alpha(\vec{w}', \vec{x}) \cdot \Exp_{y \sim \normal(\vec{w}'^T \vec{x}, 1, S)}[ \exp(\frac12 \norm{ y - \vec{w}'^T x }^2 - \frac12 \norm{ y - \vec{w}^T x }^2 ) ] \\
      & \ge \alpha(\vec{w}', \vec{x}) \cdot  \exp( \Exp_{y \sim \normal(\vec{w}'^T \vec{x}, 1, S)}[ \frac12 \norm{ y - \vec{w}'^T x }^2 - \frac12 \norm{ y - \vec{w}^T x }^2 ] ) \\
      & \ge \alpha(\vec{w}', \vec{x}) \cdot  \exp( \Exp_{y \sim \normal(\vec{w}'^T \vec{x}, 1, S)}[ - \frac12 \norm{ y - \vec{w}'^T x }^2 - \abs{\left( \vec{w} - \vec{w}' \right)^T \vec{x}}^2 ] )   \\
      & \ge \exp\left(- \abs{\left( \vec{w} - \vec{w}' \right)^T \vec{x}}^2 - 2 \log \left(\frac{1}{\alpha(\vec{w}', \vec{x})}\right) - 2\right).
  \end{align*}
  which implies the desired bound. The first inequality follows from Jensen's
  inequality. The second follows from the fact that
  $\norm{u - v}^2 \le 2\norm{u}^2 + 2\norm{v}^2$. The final inequality follows
  from Lemma~\ref{lem:secondLemma}. The same way we can prove the second part of
  the lemma.

  \subsection{Proof of Lemma \ref{lem:secondLemma}} \label{sec:secondLemmaProof}

    Let $\alpha(\vec{w}, \vec{x}) = c$ for some fixed constant $c<1$. Note
  that, $\normal(\vec{w}^T \vec{x}, 1, S)$ is a truncated version of the normal
  distribution $\normal(\vec{w}^T \vec{x},1)$. Assume, without loss of
  generality that $\vec{w}^T \vec{x} = 0$. Our goal is to upper bound the
  second moment of this distribution around $0$. It is clear that for this
  moment to be maximized, we need to choose a set $S$ of measure $c$ that is
  located as far from $\mu$ as possible. Thus, the worst case set
  $S = (-\infty,-z]\cup[z,\infty)$ consists of both tails of
  $\normal(0,\matr{I})$, each having mass $c/2$. We know that for the CDF of
  the normal distribution, the following holds:
  \[ \Phi(z)\ge 1-e^{-\frac{z^2}{2}} \]
  and we need to find the point $z$ such that $\Phi(z)=1-\frac{c}{2}$. Thus, we
  have:
  \[ \frac{c}{2}\leq e^{-\frac{z^2}{2}}\Leftrightarrow z\leq \sqrt{2\log \left(\frac{2}{c}\right)} \]
  It remains to upper bound
  $\Exp_{y \sim \normal(0, 1, S)}\left[ (y-\vec{w}^T\vec{x})^2 \right]$.

    For the aforementioned worst case set $S$, this quantity is equal to the
  second non-central moment around $\vec{w}\vec{x}$ of the truncated Gaussian
  distribution in the interval $[z,\infty)$. Thus,
  \[ \Exp_{y \sim \normal(0, 1, S)}\left[ (y-\vec{w}^T\vec{x})^2 \right] = 1 + \frac{z \phi(z)}{1-\Phi(z)} \]
  \noindent The term $M(z) = \frac{\phi(z)}{1-\Phi(z)}$ is the inverse Mills
  ratio which is bounded by $z \le M(z) \le z+1/z$ for $z > 0$, see
  \cite{gordon41}.

    Thus,
  $z^2 + 1 \le \Exp_{y \sim \normal(0, 1, S)}\left[ (y-\vec{w}^T\vec{x})^2 \right] \le z^2 + 2 \le 2\log \left(\frac{2}{c}\right) + 2$.

\subsection{Proof of Lemma \ref{lem:rejectionSamplingLemma}}
\label{sec:rejectionSamplingLemmaProof}

  Using the second part of Lemma \ref{lem:firstLemma} we have that for every
$\vec{w} \in \Domain_{r^*, \bar{B}}$ it holds that
\begin{align*}
  \log \left(\frac{1}{\alpha(\vec{w}, \vec{x}^{(i)})}\right) & \le 2 \log \left(\frac{1}{\alpha(\vec{w}, \vec{x}^{(j)})}\right) + \abs{\vec{w}^T \left( \vec{x}^{(i)} - \vec{x}^{(j)} \right)}^2 + 2
\end{align*}
\noindent if we also we the fact that $\norm{\vec{x}^{(i)}}_2 \le 1$ and
$\norm{\vec{w}}_2 \le \bar{B}$, from \eqref{eq:normalizedAssumptions} and
Definition \ref{def:regression:setaki} we get that
\begin{align} \label{eq:proof:lem:rejectionSamplingLemma:1}
  \log \left(\frac{1}{\alpha(\vec{w}, \vec{x}^{(j)})}\right) & \ge \frac{1}{2} \log \left(\frac{1}{\alpha(\vec{w}, \vec{x}^{(i)})}\right) - \bar{B} - 1.
\end{align}

  Using the first part of Lemma \ref{lem:firstLemma} we have that
\begin{align*}
  \log \left(\frac{1}{\alpha(\vec{w}, \vec{x}^{(i)})}\right) & \le 2 \log \left(\frac{1}{\alpha(\vec{w}^*, \vec{x}^{(i)})}\right) + \abs{\left( \vec{w} - \vec{w}^* \right)^T \vec{x}^{(i)}}^2 + 2 \\
  & \le 2 \log \left(\frac{1}{\alpha(\vec{w}^*, \vec{x}^{(i)})}\right) + \abs{y^{(i)} - \vec{w}^T \vec{x}^{(i)}}^2 + \abs{y^{(i)} - \vec{w}^{*T} \vec{x}^{(i)}}^2 + 2
\end{align*}
\noindent Let now $\vec{v} \in \reals^k$ be any unit vector, then we can
multiply the above inequality by $\left(\vec{v}^T \vec{x}^{(i)}\right)^2$ and
sum over all $i \in [n]$ and we get that
\begin{align*}
  \sum_{i = 1}^n \left(\vec{v}^T \vec{x}^{(i)}\right)^2 \log \left(\frac{1}{\alpha(\vec{w}, \vec{x}^{(i)})}\right) & \le 2 \sum_{i = 1}^n \left(\vec{v}^T \vec{x}^{(i)}\right)^2 \log \left(\frac{1}{\alpha(\vec{w}^*, \vec{x}^{(i)})}\right) + \\
  & ~~~~~~~~ + \sum_{i = 1}^n \left(\vec{v}^T \vec{x}^{(i)}\right)^2 \abs{y^{(i)} - \vec{w}^T \vec{x}^{(i)}}^2 + \\
  & ~~~~~~~~ + \sum_{i = 1}^n \left(\vec{v}^T \vec{x}^{(i)}\right)^2 \abs{y^{(i)} - \vec{w}^{*T} \vec{x}^{(i)}}^2 + 2 \\
  \intertext{now we can use the fact that $\vec{w} \in \Domain_{r^*, \bar{B}}$,
  $\vec{w}^* \in \Domain_{r^*, \bar{B}}$, and Assumption
  \ref{asp:survivalProbability} to get}
  \sum_{i = 1}^n \frac{\left(\vec{v}^T \vec{x}^{(i)}\right)^2}{\sum_{j = 1}^n \left(\vec{v}^T \vec{x}^{(j)}\right)^2} \log \left(\frac{1}{\alpha(\vec{w}, \vec{x}^{(i)})}\right) & \le 2 \log\left(\frac{1}{a}\right) + 2 r^* + 2. \\
  \intertext{Now fix some $\ell \in [n]$, using
  \eqref{eq:proof:lem:rejectionSamplingLemma:1} for the indices $\ell$ and $i$
  for all $i$ in the above inequality we get that}
  \frac{1}{2} \log \left(\frac{1}{\alpha(\vec{w}, \vec{x}^{(\ell)})}\right) & \le 2 \log\left(\frac{1}{a}\right) + 2 r^* + 3 + \bar{B}
\end{align*}
\noindent from which the lemma follows.

\subsection{Proof of Lemma \ref{lem:thirdLemma}} \label{sec:thirdLemmaProof}

    By our assumption we have that bounded $\ell_2$ norm condition of
  $\Domain_{r^*, \bar{B}}$ is satisfied. So our goal next is to prove that the
  first condition is satisfied too. Hence, we have to prove that
  \[ \frac{1}{n} \sum_{i = 1}^n \left(\vec{\eps}^{(i)}\right)^2
     \vec{x}^{(i)} \vec{x}^{(i) T} \preceq r \frac{1}{n} \sum_{i = 1}^n \vec{x}^{(i)} \vec{x}^{(i) T}, \]
  \noindent where $\vec{\eps}^{(i)} = y^{(i)} - \vec{w}^{*T} \vec{x}^{(i)}$
  and $r = \log\left(\nicefrac{1}{a}\right)$. Observe that
  $\vec{\eps}^{(i)}$ is a truncated standard normal random variable with the
  following truncation set
  \[ S^{(i)} = \left\{\vec{z} \mid \left( \vec{z} + \vec{w}^{*T} \vec{x}^{(i)}
     \right) \in S \right\}. \]
  \noindent Therefore we have to prove that for any unit vector $\vec{v}$ it
  holds that
  \[ \frac{1}{n} \sum_{i = 1}^n \left(\vec{\eps}^{(i)}\right)^2
     \left( \vec{v}^T \vec{x}^{(i)} \right)^2 \le r \cdot \frac{1}{n} \sum_{i = 1}^n \left( \vec{v}^T \vec{x}^{(i)} \right)^2. \]
  \noindent For start we fix a unit vector $\vec{v}$ and we want to bound the
  following probability
  \[ \Prob\left( \frac{1}{n} \sum_{i = 1}^n \left(\vec{\eps}^{(i)}\right)^2
     \left( \vec{v}^T \vec{x}^{(i)} \right)^2 \ge t \cdot \frac{1}{n} \sum_{i = 1}^n \left( \vec{v}^T \vec{x}^{(i)} \right)^2 \right). \]
  \noindent This means that we are interested in the independent random
  variables $\left( \vec{\eps}^{(i)} \right)^2$. Let $q^{(i)}$ the real value
  that is guaranteed to exist from Lemma
  \ref{lem:truncatedGaussianToExponential} and corresponds to
  $\vec{\eps}^{(i)}$ and $\vec{u}^{(i)}$ the corresponding random variable
  guaranteed to exist from Lemma \ref{lem:truncatedGaussianToExponential}. We
  also define
  $\vec{\delta}^{(i)} = \left( \vec{\eps}^{(i)} \right)^2 - q^{(i)}$, finally
  set $a^{(i)} = \alpha(\vec{w}^*, \vec{x}^{(i)})$. We have that
  \begin{align*}
    \frac{1}{n} \sum_{i = 1}^n \left(\vec{\eps}^{(i)}\right)^2
     \left( \vec{v}^T \vec{x}^{(i)} \right)^2 & \le
    2 \frac{1}{n} \sum_{i = 1}^n \vec{\delta}^{(i)}
     \left( \vec{v}^T \vec{x}^{(i)} \right)^2 +
    2 \frac{1}{n} \sum_{i = 1}^n q^{(i)}
     \left( \vec{v}^T \vec{x}^{(i)} \right)^2 \\
     & \le 2 \frac{1}{n} \sum_{i = 1}^n \vec{\delta}^{(i)}
      \left( \vec{v}^T \vec{x}^{(i)} \right)^2 +
     4 \frac{1}{n} \sum_{i = 1}^n \log\left( \frac{2}{a^{(i)}} \right)
      \left( \vec{v}^T \vec{x}^{(i)} \right)^2
    \intertext{but from Assumption \ref{asp:survivalProbability} this implies}
    & \le 2 \frac{1}{n} \sum_{i = 1}^n \vec{\delta}^{(i)}
     \left( \vec{v}^T \vec{x}^{(i)} \right)^2 +
    4 \log\left( \frac{2}{a} \right).
  \end{align*}
  \noindent Now the random variables $\vec{\delta}^{(i)}$ are stochastically
  dominated by $\vec{u}^{(i)}$ and hence
  \[ \Prob\left( \frac{1}{n} \sum_{i = 1}^n \vec{\delta}^{(i)}
     \left( \vec{v}^T \vec{x}^{(i)} \right)^2 \ge t \cdot \frac{1}{n} \sum_{i = 1}^n \left( \vec{v}^T \vec{x}^{(i)} \right)^2 \right) \le
     \Prob\left( \frac{1}{n} \sum_{i = 1}^n \vec{u}^{(i)}
        \left( \vec{v}^T \vec{x}^{(i)} \right)^2 \ge t \cdot \frac{1}{n} \sum_{i = 1}^n \left( \vec{v}^T \vec{x}^{(i)} \right)^2 \right)  \]
  \noindent But from the fact that $\Exp\left[\vec{u}^{(i)}\right] = 1/2$ we
  have that
  \begin{align*}
    \frac{1}{n} \sum_{i = 1}^n \vec{u}^{(i)}
     \left( \vec{v}^T \vec{x}^{(i)} \right)^2 & \le
     \frac{1}{n} \sum_{i = 1}^n \left( \vec{u}^{(i)} -
        \Exp\left[ \vec{u}^{(i)} \right]\right)
        \left( \vec{v}^T \vec{x}^{(i)} \right)^2 +
     \frac{1}{n} \sum_{i = 1}^n \left( \vec{v}^T \vec{x}^{(i)} \right)^2
  \end{align*}
  \noindent Therefore we have that
  \begin{align} \label{eq:secondToLastProbability}
    \Prob\left( \frac{\sum_{i = 1}^n \left(\vec{\eps}^{(i)}\right)^2
       \left( \vec{v}^T \vec{x}^{(i)} \right)^2}{\sum_{i = 1}^n \left( \vec{v}^T \vec{x}^{(i)} \right)^2} \ge t +
       4 \log\left(\nicefrac{2}{a}\right) + 1 \right) \le
       \Prob\left( \frac{\sum_{i = 1}^n \left( \vec{u}^{(i)} -
          \Exp\left[ \vec{u}^{(i)} \right]\right)
          \left( \vec{v}^T \vec{x}^{(i)} \right)^2}{\sum_{i = 1}^n \left( \vec{v}^T \vec{x}^{(i)} \right)^2} \ge t \right).
  \end{align}

  \noindent To bound the last probability we will use the matrix analog of
  Bernstein's inequality as we expressed in Theorem \ref{thm:matrixBernstein}.
  More precisely we will bound the following probability
  \[ \Prob\left( \lambda_{\max} \left( \sum_{i = 1}^n \left( \vec{u}^{(i)} -
     \Exp\left[ \vec{u}^{(i)} \right]\right)
     \vec{x}^{(i)} \vec{x}^{(i) T} \right) \ge t \cdot \lambda_{\max} \left( \sum_{i = 1}^n \vec{x}^{(i)} \vec{x}^{(i) T} \right) \right).
  \]
  \noindent We set
  $\matr{Z}_i = \frac{1}{n \cdot \lambda_{\max}(\matr{X})} \left( \vec{u}^{(i)} - \Exp\left[ \vec{u}^{(i)} \right]\right) \vec{x}^{(i)} \vec{x}^{(i) T}$,
  where $\matr{X} = \frac{1}{n} \sum_{i = 1}^n \vec{x}^{(i)} \vec{x}^{(i) T}$.
  Observe that
  \[ \left( \vec{x}^{(i)} \vec{x}^{(i) T} \right)^p =
     \norm{\vec{x}^{(i)}}_2^{2(p - 2)}
       \left( \vec{x}^{(i)} \vec{x}^{(i) T} \right)^2. \]
  \noindent Also since $\vec{u}^{(i)}$ is an exponential random variable with
  parameter $1/2$, it is well known that
  \[ \Exp\left[ \left( \vec{u}^{(i)} -
        \Exp\left[ \vec{u}^{(i)} \right]\right)^p \right] \le \frac{p!}{2} 2^p.
  \]
  \noindent These two relations imply that
  \[ \Exp\left[ \matr{Z}_i^p \right] \preceq \frac{p!}{2} \left( 2 \cdot \frac{1}{\lambda_{\max}(\matr{X})} \cdot
        \frac{\norm{\vec{x}^{(i)}}_2^2}{n} \right)^{p - 2} \cdot
          \left( \frac{1}{\lambda_{\max}(\matr{X})} \cdot \frac{2}{n} \vec{x}^{(i)} \vec{x}^{(i) T} \right)^2 \]
  \noindent Hence we can apply Theorem \ref{thm:matrixBernstein} with
  $R = 2 \cdot \frac{1}{\lambda_{\max}(\matr{X})} \cdot \frac{\norm{\vec{x}^{(i)}}_2^2}{n}$ and
  $\matr{A}_i = \frac{1}{\lambda_{\max}(\matr{X})} \cdot \frac{2}{n} \vec{x}^{(i)} \vec{x}^{(i) T}$.
  Now observe that by Assumption \ref{asp:logkAssumption} we get that
  \[ \frac{\norm{\vec{x}^{(i)}}_2^2}{n} \le \frac{\lambda_{\max}(\matr{X})}{\log k}. \]
  We now compute the variance parameter
  \[ \sigma^2 = \norm{\sum_{i = 1}^n \matr{A}_i} = 4 \cdot \frac{1}{\lambda_{\max}(\matr{X})} \cdot
        \frac{\norm{\vec{x}^{(i)}}_2^2}{n}
          \norm{\frac{1}{n} \sum_{i = 1^n} \vec{x}^{(i)}} \vec{x}^{(i) T} \le
            \frac{4}{\log k}. \]
  Using the same reasoning we get $R \le 2/\log k$, therefore applying Theorem
  \ref{thm:matrixBernstein} we get that
  \begin{align*}
    \Prob\left( \lambda_{\max} \left( \sum_{i = 1}^n \matr{Z}_i \ge t \right)
    \right)
        & \le k \exp \left( - \frac{t^2}{2 \sigma^2 + R t} \right) \\
        & \le k \exp \left( - \frac{t^2 \log k}{8 + 2 t}\right)
  \end{align*}
  \noindent From the last inequality we get that for $t \ge 5$ there is at most
  probability $1/3$ such that
  \[ \Prob\left( \lambda_{\max} \left( \sum_{i = 1}^n \left( \vec{u}^{(i)} -
     \Exp\left[ \vec{u}^{(i)} \right]\right)
     \vec{x}^{(i)} \vec{x}^{(i) T} \right) \ge t \cdot \lambda_{\max}(\matr{X}) \right) \le \frac{1}{3}
  \]
  \noindent and hence the lemma follows.

\subsection{Proof of Theorem \ref{thm:conditionsSatisfied}}
\label{sec:thm:conditionsSatisfiedProof}

\noindent We now use our Lemmas \ref{lem:firstLemma}, \ref{lem:secondLemma},
\ref{lem:thirdLemma} to prove some useful inequalities that we need to prove
our strong convexity and bounded step variance.

\begin{lemma} \label{lem:usefulInequalities}
    Let $(\vec{x}^{(1)}, y^{(i)}), \dots, (\vec{x}^{(n)}, y^{(n)})$
  be $n$ samples from the linear regression model
  \eqref{eq:truncatedLinearRegressionDefinition} with parameters $\vec{w}^*$. If
  Assumptions \ref{asp:survivalProbability} and \ref{asp:logkAssumption} hold,
  and if $\norm{\vec{x}^{(i)}}_2 \le 1$ for all $i \in [n]$ then
  \begin{align} \label{eq:thm:useful1}
    \frac{1}{n} \sum_{i = 1}^n \norm{((\vec{w} - \vec{w}^*)^T
    \vec{x}^{(i)}) \vec{x}^{(i)}}_2^2 \le 2 \cdot \bar{B}^2,
  \end{align}
  \begin{align} \label{eq:thm:useful2}
    \frac{1}{n} \sum_{i = 1}^n \Exp \left[ \norm{(y^{(i)} - \vec{w}^{*T}
    \vec{x}^{(i)}) \vec{x}^{(i)}}_2^2 \right] \le
    2 \log\left( \frac{1}{a} \right) + 4,
  \end{align}
  \begin{align} \label{eq:thm:useful3}
    \frac{1}{n} \sum_{i = 1}^n \Exp \left[ \norm{(z^{(i)} - \vec{w}^T
    \vec{x}^{(i)}) \vec{x}^{(i)}}_2^2 \right] \le r^* + 2 \cdot \bar{B}^2,
  \end{align}
  \noindent where $y^{(i)} \sim \normal(\vec{w}^{*T} \vec{x}^{(i)}, 1, S)$,
  $z^{(i)} \sim \normal(\vec{w}^T \vec{x}^{(i)}, 1, S)$ and
  $\vec{w} \in \Domain_{r^*, \bar{B}}$ with
  $r^* = 4 \log \left( \nicefrac{2}{a} \right) + 7$.
\end{lemma}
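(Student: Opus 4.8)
The unifying trick is to never throw away the rank-one structure $\vec{x}^{(i)}\vec{x}^{(i)T}$. Writing $\norm{c\,\vec{x}^{(i)}}_2^2 = c^2\norm{\vec{x}^{(i)}}_2^2$ and averaging, each left-hand side becomes $\frac1n\sum_i c_i\norm{\vec{x}^{(i)}}_2^2 = \mathrm{tr}\big(\frac1n\sum_i c_i\,\vec{x}^{(i)}\vec{x}^{(i)T}\big)$ for nonnegative $c_i$, at which point the two matrix inequalities we have — Assumption~\ref{asp:survivalProbability} for the $\vec{w}^*$-quantities and the defining inequality of $\Domain_{r^*,\bar{B}}$ (Definition~\ref{def:regression:setaki}) for a feasible $\vec{w}$ — apply by monotonicity of the trace under $\preceq$, using $\mathrm{tr}(\vec{x}^{(i)}\vec{x}^{(i)T}) = \norm{\vec{x}^{(i)}}_2^2 \le 1$ so that $\frac1n\sum_i\norm{\vec{x}^{(i)}}_2^2 \le 1$.

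For \eqref{eq:thm:useful1} this is immediate: $((\vec{w}-\vec{w}^*)^T\vec{x}^{(i)})^2 \le \norm{\vec{w}-\vec{w}^*}_2^2\norm{\vec{x}^{(i)}}_2^2$ by Cauchy--Schwarz, and the norm constraint defining $\Domain_{r^*,\bar{B}}$ together with $\norm{\vec{w}^*}_2\le\bar{B}$ bounds $\norm{\vec{w}-\vec{w}^*}_2^2$ by $O(\bar{B}^2)$; summing against $\norm{\vec{x}^{(i)}}_2^2\le 1$ gives the claim. For \eqref{eq:thm:useful2}, Lemma~\ref{lem:secondLemma} with parameter $\vec{w}^*$ and covariate $\vec{x}^{(i)}$ gives $\Exp[(y^{(i)}-\vec{w}^{*T}\vec{x}^{(i)})^2]\le 2\log(1/\alpha(\vec{w}^*,\vec{x}^{(i)}))+4$; multiplying by $\norm{\vec{x}^{(i)}}_2^2$, averaging, and taking the trace of Assumption~\ref{asp:survivalProbability} to bound $\frac1n\sum_i\log(1/\alpha(\vec{w}^*,\vec{x}^{(i)}))\norm{\vec{x}^{(i)}}_2^2 \le \log(1/a)$ yields $2\log(1/a)+4$.

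Inequality \eqref{eq:thm:useful3} is the one requiring a new idea. Lemma~\ref{lem:secondLemma} with parameter $\vec{w}$ gives $\Exp[(z^{(i)}-\vec{w}^T\vec{x}^{(i)})^2]\le 2\log(1/\alpha(\vec{w},\vec{x}^{(i)}))+4$, but an arbitrary feasible $\vec{w}$ carries no built-in lower bound on $\alpha(\vec{w},\vec{x}^{(i)})$ — the set $\Domain_{r^*,\bar{B}}$ is defined through the observed $y^{(i)}$, not through survival probabilities. So I would transport the estimate to $\vec{w}^*$ via the first inequality of Lemma~\ref{lem:firstLemma}: $\log(1/\alpha(\vec{w},\vec{x}^{(i)})) \le 2\log(1/\alpha(\vec{w}^*,\vec{x}^{(i)})) + |(\vec{w}-\vec{w}^*)^T\vec{x}^{(i)}|^2 + 2$, and bound the shift term by $|(\vec{w}-\vec{w}^*)^T\vec{x}^{(i)}|^2 \le \norm{\vec{w}-\vec{w}^*}_2^2\norm{\vec{x}^{(i)}}_2^2 = O(\bar{B}^2)\norm{\vec{x}^{(i)}}_2^2$ exactly as in \eqref{eq:thm:useful1} (alternatively, expanding $(\vec{w}-\vec{w}^*)^T\vec{x}^{(i)}=(y^{(i)}-\vec{w}^{*T}\vec{x}^{(i)})-(y^{(i)}-\vec{w}^T\vec{x}^{(i)})$ and using the defining inequality of $\Domain_{r^*,\bar{B}}$ for both $\vec{w}$ and $\vec{w}^*$, the latter feasible by Lemma~\ref{lem:thirdLemma}, trades the $\bar{B}^2$ for $O(r^*)$). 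Plugging back, multiplying by $\norm{\vec{x}^{(i)}}_2^2$, averaging, and applying the trace of Assumption~\ref{asp:survivalProbability} to the remaining $\vec{w}^*$-term gives $\frac1n\sum_i\Exp[\cdots]\le 4\log(1/a)+O(\bar{B}^2)+O(1)$, which is $r^*+O(\bar{B}^2)$ since $r^*=4\log(2/a)+7\ge 4\log(1/a)$; tracking the constants in Lemmas~\ref{lem:firstLemma} and~\ref{lem:secondLemma} gives precisely the stated $r^*+2\bar{B}^2$.

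\textbf{Main obstacle.} The only substantive step is \eqref{eq:thm:useful3}: controlling the expected truncated-Gaussian spread $\Exp[(z^{(i)}-\vec{w}^T\vec{x}^{(i)})^2]$ at an \emph{arbitrary} feasible $\vec{w}$, whose survival probability $\alpha(\vec{w},\vec{x}^{(i)})$ is not directly constrained by membership in $\Domain_{r^*,\bar{B}}$. The resolution is to pay a cost bounded by the finite diameter of $\Domain_{r^*,\bar{B}}$ to move from $\vec{w}$ to $\vec{w}^*$ (Lemma~\ref{lem:firstLemma}) and only there invoke Assumption~\ref{asp:survivalProbability}; everything else is Cauchy--Schwarz, $\norm{\vec{x}^{(i)}}_2\le 1$, and the trace monotonicity described above.
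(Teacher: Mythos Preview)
Your proposal is correct and follows essentially the same route as the paper: for \eqref{eq:thm:useful1} Cauchy--Schwarz plus the $\ell_2$ bound defining $\Domain_{r^*,\bar B}$; for \eqref{eq:thm:useful2} Lemma~\ref{lem:secondLemma} followed by Assumption~\ref{asp:survivalProbability}; and for \eqref{eq:thm:useful3} the same two lemmas chained through Lemma~\ref{lem:firstLemma} to transport from $\vec w$ to $\vec w^*$, then \eqref{eq:thm:useful1} and Assumption~\ref{asp:survivalProbability} again. Your trace-monotonicity framing is a clean way to say how the matrix inequality in Assumption~\ref{asp:survivalProbability} is applied (the paper just invokes it and $\norm{\vec x^{(i)}}_2\le 1$ without naming the mechanism), and your parenthetical alternative via the defining inequality of $\Domain_{r^*,\bar B}$ for both $\vec w$ and $\vec w^*$ is a valid variant the paper does not spell out; otherwise the arguments coincide, modulo small constant-tracking discrepancies that are also present in the paper's own proof.
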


\begin{proof}    We prove one by one the above inequalities.
  \smallskip

  \noindent \textbf{Proof of \eqref{eq:thm:useful1}.}
  \noindent Since $\vec{w}, \vec{w}^* \in \Domain_{r^*, \bar{B}}$ we have that
  $\norm{\vec{w} - \vec{w}^*}_2 \le 2 \bar{B}$. If we combine this with the
  assumption that $\norm{\vec{x}^{(i)}}_2 \le 1$ and the fact that
  $\norm{\vec{z}}_2 \le \norm{\vec{z}}_{\infty} \cdot \sqrt{k}$ then we get that
  \[ \frac{1}{n} \sum_{i = 1}^n \norm{((\vec{w} - \vec{w}^*)^T
  \vec{x}^{(i)}) \vec{x}^{(i)}}_2^2 \le
  2 \cdot B^2 \cdot k. \]
  \smallskip

  \noindent \textbf{Proof of \eqref{eq:thm:useful2}.} Using Lemma
  \ref{lem:secondLemma} we get that
  \[ \frac{1}{n} \sum_{i = 1}^n \Exp \left[ \norm{(y^{(i)} - \vec{w}^{*T}
    \vec{x}^{(i)}) \vec{x}^{(i)}}_2^2 \right] \le \frac{1}{n}
    \sum_{i = 1}^n \left(2 \log\left( \frac{1}{\alpha(\vec{w}^{*T},
    \vec{x}^{(i)})} \right) + 4 \right) \norm{\vec{x}^{(i)}}_2^2 \]
  \noindent but now we can use Assumption \ref{asp:survivalProbability} and
  the assumption that $\norm{\vec{x}^{(i)}}_2 \le 1$ and we get that
  \[ \frac{1}{n} \sum_{i = 1}^n \Exp \left[ \norm{(y^{(i)} - \vec{w}^{*T}
    \vec{x}^{(i)}) \vec{x}^{(i)}}_2^2 \right] \le
    2 \log\left( \frac{1}{a} \right) + 4. \]
  \smallskip

  \noindent \textbf{Proof of \eqref{eq:thm:useful3}.} Using Lemma
  \ref{lem:secondLemma} we get that
  \[ \frac{1}{n} \sum_{i = 1}^n \Exp \left[ \norm{(z^{(i)} - \vec{w}^T
    \vec{x}^{(i)}) \vec{x}^{(i)}}_2^2 \right] \le \frac{1}{n}
    \sum_{i = 1}^n \left(2 \log\left( \frac{1}{\alpha(\vec{w},
    \vec{x}^{(i)})} \right) + 4 \right) \norm{\vec{x}^{(i)}}_2^2. \]
  \noindent Using Lemma \ref{lem:firstLemma} on the last inequality we get
  that
  \begin{align*}
    \frac{1}{n} \sum_{i = 1}^n \Exp \left[ \norm{(z^{(i)} - \vec{w}^T
    \vec{x}^{(i)}) \vec{x}^{(i)}}_2^2 \right] & \le \frac{1}{n}
    \sum_{i = 1}^n \left(2 \log\left( \frac{1}{\alpha(\vec{w}^*,
    \vec{x}^{(i)})} \right) + 4 \right) \norm{\vec{x}^{(i)}}_2^2 + \\
    & ~~~ + \frac{1}{n} \sum_{i = 1}^n \norm{((\vec{w} - \vec{w}^*)^T
    \vec{x}^{(i)}) \vec{x}^{(i)}}_2^2 + 2 \\
    \intertext{Now using \eqref{eq:thm:useful1}, Assumption
    \ref{asp:survivalProbability}, and the facts that
    $\norm{\vec{x}^{(i)}}_2 \le 1$ and
    $\norm{\vec{w} - \vec{w}^*}_2 \le 2 \bar{B}$ we get that}
    frac{1}{n} \sum_{i = 1}^n \Exp \left[ \norm{(z^{(i)} - \vec{w}^T
    \vec{x}^{(i)}) \vec{x}^{(i)}}_2^2 \right] & \le 2 \log\left( \frac{1}{a} \right) + 4 + 2 \cdot \bar{B}^2 \\
    & \le r^* + 2 \cdot \bar{B}^2.
  \end{align*}
\end{proof}

\noindent Given Lemma \ref{lem:usefulInequalities} we are now ready to prove
Theorem \ref{thm:conditionsSatisfied}.

\begin{proof}[Proof of Theorem \ref{thm:conditionsSatisfied}]
     We first prove \eqref{eq:thm:condition1} and then
  \eqref{eq:thm:condition2}.
  \smallskip

  \noindent \textbf{Proof of \eqref{eq:thm:condition1}.} Using the fact that
  $\norm{\vec{x}^{(i)}}_2 \le 1$ we get that
  \begin{align*}     \Exp\left[ \norm{y^{(i)} \vec{x}^{(i)} - z^{(j)} \vec{x}^{(j)}}_2^2 \right] & \le \Exp\left[ \norm{y^{(i)} \vec{x}^{(i)}}_2^2 + \norm{z^{(j)} \vec{x}^{(j)}}_2^2 \right] \le \Exp\left[\left( y^{(i)} \right)^2\right] + \frac{1}{n} \sum_{j \in [n]} \Exp\left[\left( z^{(j)} \right)^2\right] \\
    & \le \Var\left[y^{(i)}\right] + \left(\Exp[y^{(i)}]\right)^2 + \frac{1}{n} \sum_{j \in [n]} \left( \Var\left[z^{(j)}\right] + \left(\Exp[z^{(j)}]\right)^2 \right)
  \end{align*}
  \noindent Now we can use the fact that
  $y^{(i)} \sim \mathcal{N}(\vec{w}^{*T} \vec{x}^{(i)}, 1, S)$ and that
  $z^{(j)} \sim \mathcal{N}(\vec{w}^{T} \vec{x}^{(j)}, 1, S)$ together with
  Lemma 6 from \cite{DaskalakisGTZ18} and the fact
  $\norm{\vec{x}^{(i)}}_2 \le 1$ we get that
  \begin{align*}     \Exp\left[ \norm{y^{(i)} \vec{x}^{(i)} - z^{(j)} \vec{x}^{(j)}}_2^2 \right] & \le 2 \norm{\vec{w}^*}^2_2 + 3 \log\left(\frac{1}{\alpha(\vec{w}^*, \vec{x}^{(i)})}\right)
    + 2 \norm{\vec{w}}^2_2 + \frac{3}{n} \sum_{j \in [n]} \log\left(\frac{1}{\alpha(\vec{w}, \vec{x}^{(j)})}\right)
  \end{align*}

  \noindent Now using Lemma \ref{lem:rejectionSamplingLemma} and the fact that
  $\norm{\vec{w}^*}_2 \le \bar{B}$ and $\norm{\vec{w}}_2 \le \bar{B}$ we get
  that
  \begin{align*}
    \Exp\left[ \norm{y^{(i)} \vec{x}^{(i)} - z^{(j)} \vec{x}^{(j)}}_2^2 \right] & \le O(\poly(1/a) \cdot \bar{B}^2)
  \end{align*}
  \noindent from which \eqref{eq:thm:condition1} follows.
  \medskip

  \noindent \textbf{Proof of \eqref{eq:thm:condition2}.} Let $\vec{v}$ be an
  arbitrary unit vector in $\reals^k$. Using Lemma \ref{lem:fourthLemma}
  from Appendix \ref{sec:app:aux} and then the first part of Lemma
  \ref{lem:firstLemma}, we have that
  \begin{align*}
    \frac{1}{n} \sum_{i = 1}^n \Exp \left[ \left( z^{(i)} -
    \Exp\left[ z^{(i)} \right] \right)^2 \right] &
    \left( \vec{v}^T \vec{x}^{(i)} \right)^2 \ge \frac{1}{n} \sum_{i = 1}^n \alpha^2(\vec{w}, \vec{x}^{(i)}) \left( \vec{v}^T \vec{x}^{(i)} \right)^2 \\
    & \ge \frac{e^{-2}}{n} \sum_{i = 1}^n \exp\left( - \norm{(\vec{w} - \vec{w}^*)^T \vec{x}^{(i)}}_2^2 - 2 \log\left(\frac{1}{\alpha(\vec{w}^*, \vec{x}^{(i)})} \right) \right) \cdot \\
    & ~~~~~~~~~~~~~~~~ \cdot \left( \vec{v}^T \vec{x}^{(i)} \right)^2
    \intertext{now we can divide both sides with
    $\frac{1}{n} \sum_{i = 1}^n \left( \vec{v}^T \vec{x}^{(i)} \right)^2$ and
    then apply Jensen's inequality and we get}
    \frac{1}{\frac{1}{n} \sum_{i = 1}^n \left( \vec{v}^T \vec{x}^{(i)} \right)^2} \frac{1}{n} & \sum_{i = 1}^n \Exp \left[ \left( z^{(i)} -
    \Exp\left[ z^{(i)} \right] \right)^2 \right]
    \left( \vec{v}^T \vec{x}^{(i)} \right)^2 \\
    & \ge \exp \left(- \frac{\sum_{i = 1}^n \left( \norm{(\vec{w} - \vec{w}^*)^T
    \vec{x}^{(i)}}_2^2 + 2 \log \left(\frac{1}{\alpha(\vec{w}^*, \vec{x}^{(i)})} \right)
    \right) \left(\vec{v}^T \vec{x}^{(i)}\right)^2}{\sum_{i = 1}^n \left( \vec{v}^T \vec{x}^{(i)} \right)^2} \right)
    \intertext{now applying Assumption \ref{asp:survivalProbability} we get}
    & \ge a^2 \exp \left(- \frac{\sum_{i = 1}^n
     \norm{(\vec{w} - \vec{w}^*)^T \vec{x}^{(i)}}_2^2
     \left(\vec{v}^T \vec{x}^{(i)}\right)^2}{\sum_{j = 1}^n \left(\vec{v}^T \vec{x}^{(i)}\right)^2} \right) \\
    & \ge a^2 \exp \left(- \frac{\sum_{i = 1}^n
     \norm{y^{(i)} - \vec{w}^{*T} \vec{x}^{(i)}}_2^2
     \left(\vec{v}^T \vec{x}^{(i)}\right)^2}{\sum_{j = 1}^n \left(\vec{v}^T \vec{x}^{(i)}\right)^2} \right) \cdot \\
    & ~~~~~~~~~~~ \cdot \exp \left(- \frac{\sum_{i = 1}^n
     \norm{y^{(i)} - \vec{w}^T \vec{x}^{(i)}}_2^2
     \left(\vec{v}^T \vec{x}^{(i)}\right)^2}{\sum_{j = 1}^n \left(\vec{v}^T \vec{x}^{(i)}\right)^2} \right)
  \end{align*}
  \noindent now we can use the fact the both $\vec{w}$ and $\vec{w}^*$
  belong to the projection set $\Domain_{r^*, B}$ as we showed in Lemma
  \ref{lem:thirdLemma} and we get

  \begin{align*}
    \frac{1}{\frac{1}{n} \sum_{i = 1}^n \left( \vec{v}^T \vec{x}^{(i)} \right)^2} \frac{1}{n} \sum_{i = 1}^n \Exp \left[ \left( z^{(i)} -
    \Exp\left[ z^{(i)} \right] \right)^2 \right]
    \left( \vec{v}^T \vec{x}^{(i)} \right)^2 \ge
    a^2 \exp\left(- 2 r^* \right)
  \end{align*}
  \noindent and using Assumption \ref{asp:logkAssumption} the relation
  \eqref{eq:thm:condition2} follows.
\end{proof}

\section{Efficient Sampling of Truncated Gaussian with Union of Intervals Survival Set} \label{sec:unionIntervalsSampling}

  In this section, in Lemma \ref{lem:samplingUnionOfIntervalsLemma}, we see
that when $S = \cup_{i = 1}^r [a_i, b_i]$, with $a_i, b_i \in \mathbb{R}$,
then we can efficiently sample from the truncated normal distribution
$\normal(\mu, \sigma, S)$ in time $\poly(n, k)$ even under the weak bound on
the mass of the set $\normal(\mu, \sigma; S) \ge a^{\poly(n, k)}$. The only
difference is that instead of exact sampling we have approximate sampling, but
the approximation error is exponentially small in total variation distance and
hence it cannot affect any algorithm that runs in poly-time.

\begin{definition}[\textsc{Evaluation Oracle}] \label{def:evaluationOracle}
    Let $f : \mathbb{R} \to \mathbb{R}$ be an arbitrary real function.
  We define the \textit{evaluation oracle} $\mathcal{E}_f$ of $f$ as an
  oracle that given a number $x \in \mathbb{R}$ and a target accuracy
  $\eta$ computes an $\eta$-approximate value of $f(x)$, that is
  $\left|\mathcal{E}_f(x) - f(x)\right| \le \eta$.
\end{definition}

\begin{lemma} \label{lem:inversionLemma}
    Let $f : \mathbb{R} \to \mathbb{R}_+$ be a real increasing and
  differentiable function and $\mathcal{E}_f(x)$ an evaluation oracle of
  $f$. Let $\ell \le f'(x) \le L$ for some $\ell, L \in \mathbb{R}_+$. Then we can construct an algorithm that implements the evaluation
  oracle of $f^{-1}$, i.e. $\mathcal{E}_{f^{-1}}$. This implementation on
  input $y \in \mathbb{R}_+$ and input accuracy $\eta$ runs in time
  $T$ and uses at most $T$ calls to the evaluation oracle $\mathcal{E}_{f}$
  with inputs $x$ with representation length $T$ and input accuracy
  $\eta' = \eta/\ell$, with
  $T = \poly\log(\max\{|f(0)/y|, |y/f(0)|\}, L, 1/\ell, 1/\eta)$.
\end{lemma}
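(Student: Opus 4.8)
The plan is to implement $\mathcal{E}_{f^{-1}}$ by bisection on the argument of $f$, using monotonicity of $f$ together with the two-sided derivative bound $\ell\le f'\le L$ to control both the number of iterations and the effect of the oracle's approximation error. All queries to $\mathcal{E}_f$ will be made at accuracy $\eta'=\ell\eta/2$, so that $\log(1/\eta')$ is $\polylog(1/\ell,1/\eta)$ as the statement demands.

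\emph{Step 1: a crude bracket.} First I would query $v_0=\mathcal{E}_f(0)$. If $|v_0-y|\le\eta'$ then $|f(0)-y|\le 2\eta'$, hence $|0-f^{-1}(y)|\le 2\eta'/\ell=\eta$ and we return $0$. Otherwise $y$ and $f(0)$ are separated; say $y>f(0)$ (the other case is symmetric). Integrating $\ell\le f'\le L$ gives $f(0)+\ell x\le f(x)\le f(0)+Lx$ for $x\ge 0$, and one checks directly that $x_{\mathrm{lo}}=0$, $x_{\mathrm{hi}}=(y-v_0+\eta')/\ell$ satisfy $f(x_{\mathrm{lo}})\le y\le f(x_{\mathrm{hi}})$, so $f^{-1}(y)\in[x_{\mathrm{lo}},x_{\mathrm{hi}}]$. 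The width of this bracket is $O(|y-f(0)|/\ell)$; since $|y-f(0)|\le\big(1+\max\{y/f(0),f(0)/y\}\big)f(0)$ and the contribution of $f(0)$ is to representation lengths the oracle must handle anyway, $\log$ of the width is $\polylog\big(\max\{|f(0)/y|,|y/f(0)|\},1/\ell\big)$.

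\emph{Step 2: bisection with an error-tolerant invariant.} Run $N=\lceil\log_2(2\,\mathrm{width}/\eta)\rceil$ bisection steps: at each step let $m$ be the midpoint of the current bracket, query $v=\mathcal{E}_f(m)$, and set $x_{\mathrm{hi}}\leftarrow m$ if $v>y$ and $x_{\mathrm{lo}}\leftarrow m$ otherwise. The delicate point — and the place where the oracle error must be absorbed — is that near the root the sign of $v-y$ may be wrong, so $f^{-1}(y)$ need not remain inside the bracket. I would instead maintain the weaker invariant $f(x_{\mathrm{lo}})\le y+\eta'$ and $f(x_{\mathrm{hi}})\ge y-\eta'$: it holds after Step 1, and it is preserved by either update since $|v-f(m)|\le\eta'$. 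After $N$ steps $x_{\mathrm{hi}}-x_{\mathrm{lo}}\le\eta/2$, and monotonicity together with $(f^{-1})'\le 1/\ell$ turns the invariant into $x_{\mathrm{lo}}\le f^{-1}(y+\eta')\le f^{-1}(y)+\eta'/\ell$ and $x_{\mathrm{lo}}\ge x_{\mathrm{hi}}-\eta/2\ge f^{-1}(y-\eta')-\eta/2\ge f^{-1}(y)-\eta'/\ell-\eta/2$, whence $|x_{\mathrm{lo}}-f^{-1}(y)|\le\eta/2+\eta'/\ell=\eta$. Output $x_{\mathrm{lo}}$.

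\emph{Resources.} There is one oracle call in Step 1 and one per bisection step, for $O(N)$ calls in total, all at accuracy $\eta'=\ell\eta/2$; the midpoints are dyadic rationals gaining one bit per step on top of $\log$ of the initial endpoints, so every queried $x$ has representation length $O(N)$; and $N=O(\log(\mathrm{width}/\eta))=\polylog\big(\max\{|f(0)/y|,|y/f(0)|\},L,1/\ell,1/\eta\big)$, which is the claimed bound $T$. The only genuinely non-routine ingredient is the error-tolerant invariant of Step 2; the rest is bookkeeping with the derivative bounds.
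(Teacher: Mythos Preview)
Your argument is correct and follows essentially the same route as the paper: locate an initial bracket containing $f^{-1}(y)$, then bisect while using the two-sided derivative bounds to translate accuracy in $y$ into accuracy in $x$. The only cosmetic differences are that the paper finds its bracket by doubling (rather than by directly setting $x_{\mathrm{hi}}=(y-v_0+\eta')/\ell$) and that you spell out explicitly the error-tolerant invariant $f(x_{\mathrm{lo}})\le y+\eta'$, $f(x_{\mathrm{hi}})\ge y-\eta'$, which the paper's proof glosses over; your care here is an improvement in exposition, not a different idea. Also note that the paper's own proof sets $\eta'=\ell\cdot\eta$ (consistent with your $\ell\eta/2$), so the $\eta/\ell$ in the lemma statement is a typo you have correctly ignored.
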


\begin{proof}[Proof of Lemma \ref{lem:inversionLemma}]
    Given a value $y \in \mathbb{R}_+$ our goal is to find an
  $x \in \mathbb{R}$ such that $f(x) = y$. Using doubling we can find
  two numbers $a, b$ such that $f(a) \le y - \eta'$ and
  $f(b) \ge y + \eta'$ for some $\eta'$ to be determined later. Because of
  the lower bound $\ell$ on the derivative of $f$ we have that this step
  will take $\log((1/\ell) \cdot \max\{|f(0)/y|, |y/f(0)|\})$ steps.
  Then we can use binary search in the interval $[a, b]$ where in each step
  we make a call to the oracle $\mathcal{E}_f$ with accuracy $\eta'$ and we
  can find a point $\hat{x}$ such that
  $\left| f(x) - f(\hat{x}) \right| \le \eta'$. Because of the upper bound
  on the derivative of $f$ we have that $f$ is $L$-Lipschitz and hence this
  binary search will need $\log(L/\eta')$ time and oracle calls. Now
  using the mean value theorem we get that for some $\xi \in [a, b]$ it
  holds that
  $\left| f(x) - f(\hat{x}) \right| = |f'(\xi)| \left|x - \hat{x} \right|$
  which implies that $\left|x - \hat{x}\right| \le \eta'/\ell$, so if we
  set $\eta' = \ell \cdot \eta$, the lemma follows.
\end{proof}

  Using the Lemma \ref{lem:inversionLemma} and the Proposition 3 of
\cite{Chevillard12} it is easy to prove the following lemma.

\begin{lemma} \label{lem:samplingOneIntervalLemma}
    Let $[a, b]$ be a closed interval and $\mu \in \mathbb{R}$ such that
  $\gamma_{[a, b]}(\mu) = \alpha$. Then there exists an algorithm that runs
  in time $\poly\log(1/\alpha, \zeta)$ and returns a sample of a
  distribution $\mathcal{D}$, such that
  $d_{\mathrm{TV}}(\mathcal{D}, N(\mu, 1; [a, b])) \le \zeta$.
\end{lemma}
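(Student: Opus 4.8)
I would prove Lemma~\ref{lem:samplingOneIntervalLemma} by \emph{inverse transform sampling}, using the $\mathrm{erf}$ evaluation routine of \cite{Chevillard12} to evaluate the Gaussian CDF and Lemma~\ref{lem:inversionLemma} to invert it. Write $F$ for the CDF of $\normal(\mu,1,[a,b])$ (the distribution the lemma calls $N(\mu,1;[a,b])$): for $x\in[a,b]$,
\[ F(x)=\frac{\Phi(x-\mu)-\Phi(a-\mu)}{\Phi(b-\mu)-\Phi(a-\mu)}, \]
where the denominator equals $\gamma_{[a,b]}(\mu)=\alpha$ and $\phi,\Phi$ denote the standard Gaussian density and CDF. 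Then $F$ is strictly increasing and differentiable on $[a,b]$ with $F'(x)=\phi(x-\mu)/\alpha$, so once we have an approximate evaluation oracle for $F^{-1}$, drawing $U\sim\mathrm{Unif}[0,1]$ and returning (an approximation of) $F^{-1}(U)$ samples from a distribution close to the target.

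The first step is to build an $\eta$-accurate evaluation oracle $\mathcal{E}_F$ for $F$ in the sense of Definition~\ref{def:evaluationOracle}. Proposition~3 of \cite{Chevillard12} gives such an oracle for $\mathrm{erf}$, hence for $\Phi$, in time $\polylog(1/\eta)$ (times a polynomial in the bit-lengths of the inputs). Approximating the numerator and denominator of $F$ each to additive accuracy $\Theta(\eta\alpha)$ — possible because we are given $\alpha\ge a^{\poly(n,k)}$, so $\log(1/\alpha)$ is controlled — and using that the numerator is at most $\alpha$, the quotient is computed to accuracy $O(\eta)$; this gives $\mathcal{E}_F$ in time $\polylog(1/\alpha,1/\eta)$. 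Next I would obtain the derivative bounds required by Lemma~\ref{lem:inversionLemma}. The upper bound is immediate: $F'(x)=\phi(x-\mu)/\alpha\le 1/(\sqrt{2\pi}\,\alpha)=:L$, so $\log L=O(\log(1/\alpha))$. There is no good pointwise \emph{lower} bound on $F'$ over all of $[a,b]$ — $\phi(x-\mu)$ can be doubly-exponentially small near an endpoint far from $\mu$ — so I would first trim the tails: using $\mathcal{E}_F$ and binary search over $[a,b]$, find $a',b'$ with $F(a')\in[\zeta/8,\zeta/4]$ and $F(b')\in[1-\zeta/4,1-\zeta/8]$, and invert $F$ only on $[a',b']$. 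On $[a',b']$ one gets $F'\ge\ell$ with $\log(1/\ell)=\polylog(1/\alpha,1/\zeta)$: the maximum of $F'$ over $[a,b]$ is attained at $x_0=\argmin_{x\in[a,b]}|x-\mu|$ and is at least $1/\sqrt{2\pi}$ (it equals $1/(\sqrt{2\pi}\alpha)$ if $\mu\in[a,b]$, and otherwise it equals the inverse Mills ratio $M(|x_0-\mu|)\ge 1/\sqrt{2\pi}$, using $\alpha\le Q(|x_0-\mu|)$ where $Q=1-\Phi$); the endpoints $a',b'$ lie within $O(\log(1/\zeta))$ of $x_0$ by log-concavity of the truncated density; and $|x_0-\mu|\le\sqrt{2\log(1/\alpha)}$ by the Gaussian tail bound; hence $F'$ at $a'$ and at $b'$ is at least $\tfrac{1}{\sqrt{2\pi}}\exp(-|x_0-\mu|\,|a'-x_0|-\tfrac12|a'-x_0|^2)$, which is the claimed lower bound on $\ell$.

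The final step assembles the sampler. Feeding $F|_{[a',b']}$, the oracle $\mathcal{E}_F$, and the bounds $\ell,L$ into Lemma~\ref{lem:inversionLemma} yields an $\eta$-accurate evaluation oracle $\mathcal{E}_{F^{-1}}$ running in time $\polylog(1/\alpha,1/\zeta,1/\eta)$; the factor $\max\{|F(0)/y|,|y/F(0)|\}$ appearing in Lemma~\ref{lem:inversionLemma} is harmless once we re-center so the reference value corresponds to $F\approx\tfrac12$ and the query is $y\in[\zeta/8,1-\zeta/8]$. The algorithm then draws $U\sim\mathrm{Unif}[0,1]$, returns $a$ if $U<\zeta/4$, returns $b$ if $U>1-\zeta/4$, and otherwise returns $\mathcal{E}_{F^{-1}}(U)$ computed to accuracy $\eta:=\Theta(\zeta\alpha)$. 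For the total variation guarantee: the two endpoint branches together have probability at most $\zeta/2$; on the middle range the output is within $\eta$ of the exact inverse $F^{-1}(U)$, and because the target density is bounded by $L=1/(\sqrt{2\pi}\alpha)$ this additive-in-argument error contributes only $O(L\eta)=O(\zeta)$ extra total variation, so $d_{\mathrm{TV}}(\mathcal{D},\normal(\mu,1,[a,b]))\le\zeta$ after fixing constants, and the overall running time is $\polylog(1/\alpha,1/\zeta)$ (up to the polynomial dependence on input bit-length inherent to \cite{Chevillard12}).

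The step I expect to be the main obstacle is the second one: pinning down a lower bound $\ell$ on $F'$ over the trimmed interval that loses only $\polylog(1/\alpha,1/\zeta)$ — in particular the universal lower bound $1/\sqrt{2\pi}$ on $\max_{[a,b]}F'$ and the control $|x_0-\mu|\le\sqrt{2\log(1/\alpha)}$ — and, secondarily, the bookkeeping that turns the $\eta$-additive inversion error into a total variation bound. Everything else is a routine combination of Lemma~\ref{lem:inversionLemma}, Proposition~3 of \cite{Chevillard12}, and binary search.
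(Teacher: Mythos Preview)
Your proposal is correct and follows essentially the same approach as the paper's proof sketch: inverse transform sampling, using Proposition~3 of \cite{Chevillard12} as an evaluation oracle for the Gaussian CDF and Lemma~\ref{lem:inversionLemma} to invert it. In fact you are more careful than the paper, which simply asserts that one can ``compute with exponential accuracy the value $F^{-1}(y)$'' without discussing the derivative lower bound $\ell$ that Lemma~\ref{lem:inversionLemma} requires; your tail-trimming step and the log-concavity argument bounding $|a'-x_0|,|b'-x_0|$ fill exactly this gap.
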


\begin{proof}[Proof Sketch]
    The sampling algorithm follows the steps:
  (1) from the cumulative distribution function $F$ of the distribution
  $N(\mu, 1; [a, b])$ define a map from $[a, b]$ to $[0, 1]$, (2)
  sample uniformly a number $y$ in $[0, 1]$ (3) using an evaluation oracle
  for the error function, as per Proposition 3 in \cite{Chevillard12},
  and using Lemma \ref{lem:inversionLemma} compute with exponential
  accuracy the value $F^{-1}(y)$ and return this as the desired sample.
\end{proof}

  Finally using again Proposition 3 in \cite{Chevillard12} and Lemma
\ref{lem:samplingOneIntervalLemma} we can get the following lemma.

\begin{lemma} \label{lem:samplingUnionOfIntervalsLemma}
    Let $[a_1, b_1]$, $[a_2, b_2]$, $\dots$, $[a_r, b_r]$ be closed
  intervals and $\mu \in \mathbb{R}$ such that
  $\gamma_{\cup_{i = 1}^r [a_i, b_i]}(\mu) = \alpha$. Then there exists
  an algorithm that runs in time $\poly(\log(1/\alpha, \zeta), r)$ and
  returns a sample of a distribution $\mathcal{D}$, such that
  $d_{\mathrm{TV}}(\mathcal{D}, N(\mu, 1; \cup_{i = 1}^r [a_i, b_i])) \le \zeta$.
\end{lemma}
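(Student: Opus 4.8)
The plan is to exploit the fact that $N(\mu,1;\cup_{i=1}^r[a_i,b_i])$ is a finite mixture. After merging overlapping intervals (sorting the $2r$ endpoints, in $\poly(r)$ time) we may assume $[a_1,b_1],\dots,[a_r,b_r]$ are disjoint; write $p_j = \gamma_{[a_j,b_j]}(\mu)$ for the Gaussian mass of the $j$-th interval, so $\alpha = \sum_{j=1}^r p_j$. A sample from $N(\mu,1;\cup_j[a_j,b_j])$ is then obtained by first drawing an index $j$ with probability $p_j/\alpha$ and then drawing from $N(\mu,1;[a_j,b_j])$. The second step is exactly what Lemma~\ref{lem:samplingOneIntervalLemma} provides, so the only new work is to implement the first step, i.e.\ to sample the mixing index from (necessarily approximate) weights.

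For the first step I would compute, for each $j$, an approximation $\hat p_j$ of $p_j = \Phi(b_j-\mu)-\Phi(a_j-\mu)$ to additive accuracy $\eta$ using the evaluation oracle for the error function from Proposition~3 of \cite{Chevillard12}; each such evaluation costs $\poly\log(1/\eta)$, and, where an inversion is needed, Lemma~\ref{lem:inversionLemma} keeps this within the same order. Clip each $\hat p_j$ to be nonnegative, set $\hat\alpha = \sum_j \hat p_j$, sample $j^\star$ from the distribution with probabilities $\hat p_j/\hat\alpha$, and finally output a sample from $N(\mu,1;[a_{j^\star},b_{j^\star}])$ produced by Lemma~\ref{lem:samplingOneIntervalLemma} run with accuracy parameter $\zeta/3$. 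For correctness one bounds $d_{\mathrm{TV}}(\mathcal D, N(\mu,1;\cup_j[a_j,b_j]))$ by the triangle inequality through the intermediate distribution ``pick $j$ with the \emph{exact} weights $p_j/\alpha$, then sample the $j$-th interval distribution \emph{exactly}'': the gap between the true mixing weights and $\hat p_j/\hat\alpha$ is $O(r\eta/\alpha)$ in total variation (using $\hat\alpha \ge \alpha/2$, which holds once $\eta \le \alpha/(2r)$), so taking $\eta = \alpha\zeta/(cr)$ for a suitable constant $c$ makes this $O(\zeta)$, and the per-interval sampler contributes at most $\zeta/3$; adjusting constants gives the desired $\zeta$.

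The one subtlety — and the step I expect to be the main obstacle — is that an interval with extremely small mass $p_j$ (e.g.\ $p_j = \alpha^{\poly(n,k)}$, which is the regime the surrounding section explicitly allows) would make the call to Lemma~\ref{lem:samplingOneIntervalLemma} on $[a_j,b_j]$ cost $\poly\log(1/p_j)$, which need not be polynomial in $\log(1/\alpha)$. The fix is to truncate the mixture: discard every interval with $\hat p_j < \alpha\zeta/(2r)$ (reset its weight to $0$ and renormalize). The total discarded mass is at most $\alpha\zeta/2 + r\eta$, so this perturbs the mixing distribution by at most $O(\zeta)$ in total variation, while every surviving interval has $p_j = \Omega(\alpha\zeta/r)$ and hence is handled by Lemma~\ref{lem:samplingOneIntervalLemma} in time $\poly\log(r/(\alpha\zeta),1/\zeta)$.

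Putting the pieces together, the running time is $O(r)$ error-function evaluations at accuracy $\eta = \Theta(\alpha\zeta/r)$, each $\poly\log(r/(\alpha\zeta))$, plus one invocation of Lemma~\ref{lem:samplingOneIntervalLemma} on an interval of mass $\Omega(\alpha\zeta/r)$, for a total of $\poly(\log(1/\alpha), \log(1/\zeta), r)$, matching the claimed bound; and the accumulated total variation error is $\zeta$ after the constants are fixed. Everything beyond the small-mass truncation and the propagation of the additive errors in the $\hat p_j$ into a total-variation bound on the discrete mixing distribution is routine bookkeeping on top of Lemmas~\ref{lem:inversionLemma} and~\ref{lem:samplingOneIntervalLemma}.
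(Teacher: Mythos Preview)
Your proposal is correct and follows essentially the same approach as the paper's proof sketch: decompose the truncated Gaussian as a mixture over the intervals, approximate each interval's mass via the error-function oracle of \cite{Chevillard12}, discard intervals whose estimated mass is below a $\Theta(\alpha\zeta/r)$ threshold, sample an index from the (renormalized) approximate weights, and then invoke Lemma~\ref{lem:samplingOneIntervalLemma} on the chosen interval. You have in fact fleshed out the small-mass truncation and the error accounting more carefully than the paper's sketch does; the only superfluous remark is the mention of Lemma~\ref{lem:inversionLemma} when computing the $\hat p_j$, since evaluating $\Phi$ requires no inversion (that lemma is only used inside Lemma~\ref{lem:samplingOneIntervalLemma}).
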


\begin{proof}[Proof Sketch]
    Using Proposition 3 in \cite{Chevillard12} we can compute
  $\hat{\alpha}_i$ which estimated with exponential accuracy the mass
  $\alpha_i = \gamma_{[a_i, b_i]}(\mu)$ of every interval $[a_i, b_i]$.
  If $\hat{\alpha}_i/\alpha \le \zeta/3r$ then do not consider interval
  $i$ in the next step. From the remaining intervals we can choose one
  proportionally to $\hat{\alpha}_i$. Because of the high accuracy in the
  computation of $\hat{\alpha}_i$ this is $\zeta/3$ close in total
  variation distance to choosing an interval proportionally to $\alpha_i$.
  Finally after choosing an interval $i$ we can run the algorithm of Lemma
  \ref{lem:samplingOneIntervalLemma} with accuracy $\zeta/3$ and hence the
  overall total variation distance from
  $N(\mu, 1; \cup_{i = 1}^r [a_i, b_i])$ is at most $\zeta$.
\end{proof}

\end{document}